\newtheorem{theorem}{Theorem} 
\newtheorem{remark}[theorem]{Remark}
\newtheorem{lemma}[theorem]{Lemma}
\newtheorem{proposition}[theorem]{Proposition}
\newtheorem{corollary}[theorem]{Corollary}
\newtheorem{example}[theorem]{Example}
\newproof{proof}{Proof}
\newcommand{\zmerspthing}{z_{\mbox{\normalfont\protect\tiny MERSP}}}
\newcommand{\zmersp}{\hyperlink{zmersptarget}{\zmerspthing}}
\newcommand{\zmespthing}{z_{\mbox{\normalfont\protect\tiny MESP}}}
\newcommand{\IdentityNLP}{\hyperlink{Identitytarget}{\mbox{Identity}}}
\newcommand{\DiagonalNLP}{\hyperlink{Identitytarget}{\mbox{Diagonal}}}
\newcommand{\TraceNLP}{\hyperlink{Identitytarget}{\mbox{Trace}}}
\newcommand{\Rfix}{\hyperlink{Rfixtarget}{\mathcal{R}}}
\newcommand{\Sfix}{\hyperlink{Sfixtarget}{\mathcal{C}}}
\newcommand{\znlpthing}{{z}_{\text{\normalfont\tiny hNLP}}}
\newcommand{\znlp}{\hyperlink{znlptarget}{\znlpthing}}
\newcommand{\NLPcompMERSP}{$\overline{\vphantom{t}\smash{\mbox{hNLP}}}$$(\mbox{MERSP})$}
\newcommand{\NLPorigMERSP}
{{\rm hNLP}$(\mbox{MERSP})$}
\newcommand{\zspectralthing}{z_{\mbox{\normalfont\protect\tiny $\mathcal{S}$}}}
\newcommand{\zspectral}{\hyperlink{zspectraltarget}{\zspectralthing}}
\DeclareMathOperator{\Diag}{Diag}
\DeclareMathOperator{\diag}{diag}
\DeclareMathOperator{\ldet}{ldet}
\DeclareMathOperator{\Tr}{tr}
\DeclareMathOperator{\rank}{rank}
\DeclareMathOperator{\argmin}{argmin}
\renewcommand*{\top}{%
  {\mathpalette\@transpose{}}%
}
\newcommand*{\@transpose}[2]{%
  \raisebox{\depth}{$\m@th#1\scriptscriptstyle\mathsf{T}$}%
}
\begin{document}

\let\WriteBookmarks\relax
\def\floatpagepagefraction{1}
\def\textpagefraction{.001}
\shorttitle{The hyper-scaled NLP bound for MERSP}
\shortauthors{G.~Ponte, M.~Fampa, J.~Lee}

\title[mode = title]{The hyper-scaled NLP bound for\texorpdfstring{\\}{} maximum-entropy remote sampling} 

\author[1]{Gabriel Ponte}[orcid=0000-0002-8878-6647]
\ead{gabponte@umich.edu}
\ead[URL]{https://sites.google.com/umich.edu/gabrielponte/}

\author[2]{Marcia Fampa}[orcid=0000-0002-6254-1510]
\ead{fampa@cos.ufrj.br}
\ead[URL]{https://marciafampa.com/}

\author[3]{Jon Lee}[orcid=0000-0002-8190-1091]
\ead{jonxlee@umich.edu}
\ead[URL]{https://sites.google.com/site/jonleewebpage/}

\affiliation[1]{organization={University of Michigan},city={Ann Arbor},state={Michigan},country={U.S.A.},}
\affiliation[2]{organization={Universidade Federal do Rio de Janeiro},country={Brasil}}
\affiliation[3]{organization={University of Michigan},city={Ann Arbor},state={Michigan},country={U.S.A.}}

\begin{abstract}
The maximum-entropy remote sampling problem (MERSP) is
to select a subset of $s$ random variables from a set of $n$
random variables, so as to maximize the information 
concerning a set of target random variables that are not directly observable. We assume throughout that the set
of all of these random variables follows a joint Gaussian distribution, and that we have the covariance matrix available. Finally, we measure information using
Shannon's differential entropy.

The main approach for exact solution of moderate-sized instances of MERSP has been 
branch-and-bound (B\&B), and so previous work concentrated on
upper bounds. Prior to our work, there were two 
upper-bounding methods for MERSP: the so-called ``complementary NLP bound'' and the ``spectral bound'', both introduced 25 years ago.
We are able now to establish domination results between these two upper bounds. 
Further, we propose a novel and effective ``hyper-scaled NLP bound'' (hNLP  bound) based on 
a subtle convex relaxation. 
The ``complementary'' version of hNLP bound for MERSP generalizes the previous complementary NLP bound for MERSP. 
We provide theoretical guarantees, giving sufficient conditions under which the complementary hNLP bound strictly dominates the complementary NLP bound. In addition, the hNLP formulation allows us to derive upper bounds for rank-deficient covariance matrices when they satisfy a technical condition. This is in contrast to the previous NLP bound that worked 
with only positive definite covariance matrices (because it was wedded to a complementary formulation). Additionally, we describe procedures for calculating hyper-scaling parameters. Finally, for B\&B, we provide a variable-fixing methodology and results guiding the best way to construct subproblems.
Numerical experiments on benchmark instances demonstrate the effectiveness of our approaches in advancing the algorithmic state-of-the-art for MERSP. 
\end{abstract}

\begin{keywords}
maximum-entropy sampling  \sep maximum-entropy  remote sampling  \sep nonlinear discrete optimization  \sep integer nonlinear optimization \sep convex relaxation \sep branch-and-bound
\end{keywords}

\maketitle


\section{Introduction}
\label{sec:intro}

Let $Y$ be a random vector, indexed by $N\cup T$, where (without loss of generality)
$N:=\{1,\ldots,n\}$,  $|T|=t$, and
$N\cap T=\emptyset$. 
Given an integer $s$ satisfying $0<s<n$,
our goal is to choose a subvector $Y_S$ of $Y_N$\,, with 
$|S|=s$, so as to minimize the remaining information in $Y_T$\,, after conditioning
on $Y_S$\,. The idea is that we cannot directly observe the ``target''
random vector $Y_T$\,, and we want to gain as much information as
possible about it, by observing only a subvector of size $s$ of the
observable random vector $Y_N$\,. The restriction on the cardinality
of $S$ is motivated by sampling being costly. 

As is usual, we measure information 
using Shannon's ``differential entropy'' (see \citep*{Shannon}), and 
we assume that $Y$ has a joint Gaussian distribution,
with covariance matrix denoted by $C:=C[N\cup T,N\cup T]$. 
In this setup, it turns out (see \citep*{AFLW_Remote}) that we can view our
problem as seeking to minimize the natural logarithm of the determinant of
the conditional covariance matrix
$
C_S[T,T]:=C[T,T]-C[T,S]C[S,S]^{-1}C[S,T],
$
over $S\subset N$ with $|S|=s$, where we use 
$C[P,Q]$ to denote the submatrix of $C$  with row 
indices $P$ and column indices $Q$.
With the mild assumption that $C[T,T]\succ 0$,
as was pointed out in \citep*{AFLW_Remote}, it is more convenient and equivalent to view this as maximizing 
 $\ldet (C[S,S]) - \ldet(C_T[S,S])$,  over $S\subset N$ with $|S|=s$, where $\ldet(\cdot)$ denotes the natural logarithm of the determinant, 
 and this specific formulation is known as the  \emph{maximum-entropy remote sampling problem} (MERSP); see \citep*{AFLW_Remote}. 
 We note that without the term ``$- \ldet (C_T[S,S])$'', the
 problem is known as the \emph{maximum-entropy sampling problem} (MESP),
 which has been the subject of intense study until these days (see \citep*{FLbook} and the many references therein, and even more recently \citep*{FL_update}, \citep*{ADMM4DOPT}, \citep*{MESP2DOPT}). Motivated by the many algorithmic techniques
 and results that have been developed and are continuing to be
 developed for MESP, we seek to attack the more difficult MERSP,
 a problem that has been largely neglected for the last 25 years.
For convenience
throughout, we write $B:=C[N,N]$ and $B_T :=C_T[N,N]$. 
Letting $S(x)$ denote the support of $x\in\{0,1\}^n$,  and $\mathbf{e}$ denote an all-ones vector, 
we formally define
 the \emph{maximum-entropy remote sampling problem} as
 \begin{align*}\tag{MERSP}\label{MERSP}
&\zmerspthing(B,B_T\,,s):=\max \left\{\ldet (B[S(x),S(x)]) - \ldet (B_T[S(x),S(x)])~:~
\mathbf{e}^\top x =s,~ x\in\{0,1\}^n \right\},
\end{align*} 
and the closely related \emph{maximum-entropy sampling problem} as
\begin{equation*}\tag{MESP}\label{MESP}
\zmespthing(B,s):= \max \left\{\ldet (B[S(x),S(x)]) ~:~\mathbf{e}^\top x =s,~  x\in\{0,1\}^n\right\},
\end{equation*}
where $T=\emptyset$ in the latter. To formally see that \ref{MESP} is a special case of 
\ref{MERSP}, we refer to \citep*[Section 2]{AFLW_Remote}.

\medskip

\noindent {\bf Literature.} \ref{MESP} was introduced in \citep*{SW}, and subsequently proven to be NP-hard and approached for global optimization by branch-and-bound (B\&B) in \citep*{KLQ}.
A rather comprehensive work on  
\ref{MESP} is the recent monograph  \citep*{FLbook}, with very recent advances in
\citep*{FL_update}, \citep*{ADMM4DOPT}, \citep*{MESP2DOPT}.
Upper bounds from a variety of 
convex relaxations of \ref{MESP} are called 
the ``NLP bound''  
(see \citep*{AFLW_Using}, \citep*{AFLW_IPCO}),
the ``BQP bound'' (see \citep*{Anstreicher_BQP_entropy}),
the ``linx bound'' (see \citep*{Kurt_linx}),
and the ``factorization bound''
(see \citep*{Nikolov}, \citep*{Weijun}, \citep*{FactPaper}, \citep*{li2025augmented}).
Many of these bounds can take advantage of a scaling technique,
first introduced in \citep*{AFLW_Using}, \citep*{AFLW_IPCO}),
and later generalized to ``g-scaling'' (see \citep*{ACDA2023}, \citep*{gscale}).
\ref{MERSP} was introduced by 
\citep*{Bueso} and \citep*{AFLW_Remote};
also see \citep*{LeeDiscuss}.
\citep*{AFLW_Remote} demonstrated its NP-hardness, 
extended a  ``spectral bound''
for constrained \ref{MESP} (see \citep*{LeeConstrained}) to
a spectral bound for constrained \ref{MERSP},
and extended the convex-programming ``NLP bound'' from \ref{MESP}
to a complementary formulation of \ref{MERSP}. In what follows, when referring to the NLP bound for \ref{MESP} in particular, we state this explicitly so as to avoid confusion. 

\medskip
\noindent {\bf Our approach.} 
Due to the continuous
concavity of the $\ldet$
function, it has been natural 
to seek 
upper bounds for
the optimal objective 
value of
\ref{MESP} based on
convex-programming 
relaxations;
indeed, this approach
has been very successful
(see \citep*{FLbook}, and the
references 
above).
But \ref{MERSP} is significantly 
more challenging,
because its (maximization) 
objective
function has also a negative 
$\ldet$ term. 

In this work, we leverage a few key techniques for obtaining
upper-bounds for \ref{MERSP}.  \citep*{AFLW_Remote} pointed out (also see \citep*[Section 1.6]{FLbook}) an ordinary ``scaling'' (o-scaling) principle for \ref{MERSP}:
that is, for $\gamma > 0$, we have $ \zmerspthing(B,B_T\,,s)= \zmerspthing(\gamma B,\gamma B_T\,,s)$. Additionally, \citep*{gscale} developed  
``generalized scaling'' (g-scaling) principles, which inspire our work.
Finally, \citep*{AFLW_Remote} pointed out (also see \citep*[Section 1.6]{FLbook}) a ``complementing'' principle for \ref{MERSP}:
If $C$ is positive definite, 
then  
\begin{equation}\label{eq:complementation}
    \zmerspthing(B,B_T\,,s)=\zmerspthing(B^{-1},B_T^{-1},n-s) + \ldet (B) - \ldet(B_T).
\end{equation} 
We note that any upper bound for $\zmersp(B^{-1},B_T^{-1},n-s)$ plus $\ldet (B)-\ldet(B_T)$ is also an upper bound for $\zmersp(B,B_T\,,s)$, which we call a \emph{complementary upper bound}. 
These principles, ordinary scaling (o-scaling),
generalized scaling (g-scaling), and complementing, are referred to in what follows.

\medskip
\noindent {\bf Organization and Contributions.} 
In \S\ref{sec:hyper-nlp-bound}, we propose a novel  ``hyper-scaled NLP bound'' (hNLP bound) based on 
a subtle convex relaxation
of a generalization of \ref{MERSP}, the ``maximum-entropy difference problem'' (\ref{MEDP}),
a problem also considered by \citep*{AFLW_Remote}.
The hNLP bound has: 
(i) an o-scaling parameter
$\gamma\in\mathbb{R}_{++}$ (this is 
the o-scaling parameter of the NLP bound for \ref{MEDP} proposed in \citep*{AFLW_Remote}), 
(ii) a further  scaling parameter $\psi\in\mathbb{R}_{++}$ (which has no precedent in the literature), and 
(iii) a g-scaling parameter 
$\Phi\in\mathbb{R}^n_{++}$ 
(closely related to the g-scaling parameters in 
\citep*{gscale}).
The complementary hNLP bound for \ref{MERSP} generalizes the complementary NLP bound for \ref{MERSP}.
We also review the three parameter-selection strategies for the NLP bound for \ref{MEDP} proposed in \citep*{AFLW_Remote}, namely Identity, Diagonal and Trace, which we employ as well.
In \S\ref{sec:compare-mersp-bounds}, 
we demonstrate that under certain technical sufficient conditions, the complementary NLP bound for \ref{MERSP}
dominates the spectral bound for \ref{MERSP},
bounds both proposed in \citep*{AFLW_Remote}. 
Because the complementary hNLP bound for \ref{MERSP} trivially dominates the  complementary NLP bound
for \ref{MERSP},
under the same technical conditions, 
 the complementary hNLP bound for \ref{MERSP} dominates the spectral bound
for \ref{MERSP}. 
In \S\ref{sec:aug-nlp-bound}, we study the 
computation of the parameter $\psi$. 
First, we observe that while the  complementary NLP bound for \ref{MERSP} 
of \citep*{AFLW_Remote} is tied to the complementary formulation of \ref{MERSP}, we have that under some mild technical conditions, our hNLP bound 
applies also to the original formulation of \ref{MERSP}. 
Because of this, it applies even when either $B$ or $B_T$ is singular. We study this in detail. Then we
describe how to optimally choose the parameter $\psi$, via closed form expressions. 
In \S\ref{sec:diag-scale}, we propose a practical strategy for selecting the g-scaling parameter $\Phi$, and we show how to optimize it for the hNLP bound with the Identity strategy. 
In \S\ref{sec:BB},  for B\&B, we provide a variable-fixing methodology and results guiding the best way to construct subproblems.
In \S\ref{sec:exp}, we present results from numerical experiments that demonstrate the benefits of the  hNLP bound 
and the impact of using the scaling parameters $\psi$ and $\Phi$, both individually and in combination. In particular, we exhibit significant improvements on real-data instances compared to the approach in \citep*{AFLW_Remote}.
In \S\ref{sec:out}, we describe some directions for further study.

\medskip
\noindent{\bf Further Notation.}
Throughout, we denote any all-zero 
matrix simply by $0$, and we denote any zero (column) vector by $\mathbf{0}$.
We denote
any \hbox{$i$-th} standard unit vector by $\mathbf{e}_i$\,,  
 and the order-$n$ identity matrix by $I_n$\,.
 We let $\mathbb{S}^n$  (resp., $\mathbb{S}^n_+$~, $\mathbb{S}^n_{++}$)
 denote the set of order-$n$ symmetric (resp., positive-semidefinite, positive-definite) matrices.
 We let 
 \begin{itemize}
 \item $\Diag(x)$ denote the $n\times n$ diagonal matrix with diagonal elements given by the components of $x\in \mathbb{R}^n$;
 \item $\diag(X)$ denote the $n$-vector with elements given by the diagonal elements of $X\in\mathbb{R}^{n\times n}$;
\item  $\Diag(X):=\Diag(\diag(X))\in\mathbb{R}^{n\times n}$, for $X\in\mathbb{R}^{n\times n}$.
\end{itemize}
   For a matrix $X$ and index sets $P,Q$, $X[P,Q]$ denotes the submatrix  with row (column) indices $P$ ($Q$).
We use
$\Tr(\cdot)$ the trace,
and we use $(\cdot)^\dagger$ for the Moore-Penrose pseudoinverse.
For $X\in\mathbb{S}^n$,
we let $\lambda_i(X)$ 
denote its $i$-th greatest eigenvalue. 
We let  $\lambda(X):=(\lambda_1(X),\lambda_2(X),\ldots,\lambda_n(X))^\top$.
For $X\in\mathbb{R}^{m\times n}$,
we let  $\sigma_i(X)$
denote its $i$-th greatest singular value. 
We let  $\sigma(X):=(\sigma_1(X),\sigma_2(X),\ldots,\sigma_{\min\{m,n\}}(X))^\top$.
For a matrix $X := 
   \left(\begin{smallmatrix}
       U & W\\
       Y & Z
   \end{smallmatrix}\right)$, 
    we denote the Schur complement of the block $U$ in  $X$ by $X/U$.
For a vector $d\in\mathbb{R}^n$,
which plays a special role for us, we let $d_{\max}$
(resp.,  $d_{\min}$)
denote its maximum (resp., minimum) component. For $x\in \mathbb{R}^n_+$\,, $p\in\mathbb{R}^n_{++}$\,, we write $x^p$ to denote the vector in $\mathbb{R}^n_+$ having $(x^p)_i:=(x_i)^{p_i}$. 


\section{The hyper-scaled  NLP bound}\label{sec:hyper-nlp-bound}
Let $C_1\,, C_2 \in \mathbb{S}^n_{+}$\,,   
and let $s$ be an integer satisfying
$0 < s<  n$. Consider the following more-general problem than \ref{MERSP}, the \emph{maximum-entropy difference problem}: 
\begin{equation}\label{MEDP}\tag{MEDP}
z(C_1,C_2,s):= \max\left\{\ldet (C_1[S(x),S(x)])  - \ldet (C_2[S(x),S(x)]) \,:\, \mathbf{e}^\top x =s,\,x\in\{0,1\}^n\right\}.
\end{equation}
Notice that \ref{MEDP} particularizes to \ref{MERSP} by setting $C_1:=B$ and $C_2:=B_T$\,.

Consider given parameters $d,p \in \mathbb{R}^n_{++}$\,, and $\gamma > 0$. For $\psi > 0$, $\Phi\in\mathbb{R}^n_{++}$\,, we define
\[
f_1(x;\psi,\Phi) := \ldet\big(\Diag((\gamma d)^x) +  
\gamma\Diag(x^{p/2})(\psi \Diag(\Phi)C_1\Diag(\Phi)-\Diag(d))\Diag(x^{p/2}) \big) - s\log \psi,
\]
\[
f_2(x;\Phi) := \ldet\big(\Diag((\gamma d)^x) +  
\gamma\Diag(x^{p/2})(\Diag(\Phi)C_2\Diag(\Phi)-\Diag(d))\Diag(x^{p/2}) \big),
\]
and $f(x;\psi,\Phi) := f_1(x;\psi,\Phi) - f_2(x;\Phi)$. 
Then, the \emph{hyper-scaled NLP bound} for \ref{MEDP}, 
 is given by 
    \begin{align}\label{hNLP}\tag{hNLP}
\textstyle
\hypertarget{znlptarget}{\znlpthing}(C_1\,,C_2\,,s\,;\psi,\Phi) := \max \{&f(x;\psi,\Phi) \, :\, \mathbf{e}^\top x\!=\!s,\,   
x\in[0,1]^n\}.\nonumber 
\end{align}

It is important to notice that the parameter $\psi$ appears in $f_1$ but not in $f_2$\,.

The following proposition establishes that, for any $\psi>0$ and $\Phi\in\mathbb{R}^n_{++}$\,, \ref{hNLP} is a relaxation of \ref{MEDP}.

\begin{proposition}\label{prop:exact_rel}
Let $\psi>0$, $\Phi\in\mathbb{R}^n_{++}$\,, and $\hat{x}\in\{0,1\}^n$ such that $\mathbf{e}^\top \hat{x}=s$. Then, 
\[
f(\hat{x};\psi,\Phi)= \ldet ( C_1[S(\hat{x}),S(\hat{x})] ) - \ldet ( C_2[S(\hat{x}),S(\hat{x})] ).
\]
\end{proposition}

\begin{proof} 
  Note that because $\Diag(\Phi)$
  is a diagonal matrix with positive diagonal entries, we have that
 \[
 \left(\Diag(\Phi)C_k\Diag(\Phi)\right)[S(\hat{x}),S(\hat{x})] = (\Diag(\Phi))[S(\hat{x}),S(\hat{x})]\,C_k[S(\hat{x}),S(\hat{x})]\,(\Diag(\Phi))[S(\hat{x}),S(\hat{x})],
 \]
 for $k=1,2$. Then, it is  straightforward to verify that 
\begin{align*}
&f_1(\hat{x};\psi,\Phi)= \ldet \left((\psi\gamma\Diag(\Phi)C_1\Diag(\Phi))[S(\hat{x}),S(\hat{x})]\right) - s\log \psi\\
&\qquad =\ldet ( C_1[S(\hat{x}),S(\hat{x})] )+ s\log \gamma + 2\ldet(\Diag(\Phi)),\\
&f_2(\hat{x};\Phi)= \ldet \left((\gamma\Diag(\Phi)C_2\Diag(\Phi))[S(\hat{x}),S(\hat{x})]\right)\\
&\qquad = \ldet ( C_2[S(\hat{x}),S(\hat{x})] ) + s\log \gamma + 2\ldet(\Diag(\Phi)).
\end{align*}
The result follows.
\qed\end{proof}

Leveraging results
from \citep*{AFLW_Using}
on parameter selection, 
\citep*{AFLW_Remote}  analyzed the best choices of the parameters $d$, $p$, and $\gamma$ for the fundamental case of $\psi:=1$, $\Phi:=\mathbf{e}$, which assures convexity of \ref{hNLP} in this case. The strategies are based in the following  two results.

\begin{theorem}[\rm\protect{\citep*{AFLW_Remote}, Theorem 4.1}]\label{thm:nlpbound-mersp}
    Assume that $ \Diag(d) \succeq  C_2 \succeq  C_1$\,, $p \geq \mathbf{e}$, $\gamma > 0$, and $0<\gamma d_i\leq \exp(p_i - \sqrt{p_i})$ for $i \in N$. Then $f(\cdot\,;1,\mathbf{e})$ is concave on $0 < x \leq \mathbf{e}$. 
\end{theorem}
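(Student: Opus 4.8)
\emph{Proof plan.} The plan is to introduce one change of variables that removes the interaction between the two $\ldet$ terms and reduces concavity to a single scalar inequality equivalent to the hypothesis $0<\gamma d_i\le\exp(p_i-\sqrt{p_i})$. First I would rewrite each $f^k$. Put $\Phi(x):=\Diag(x^{p/2})$; absorbing the $-D$ contribution into the diagonal term, the matrix inside the $\ldet$ defining $f^k$ factors as $\Phi(x)\big(\Diag(\psi(x))+\gamma C_k\big)\Phi(x)$, where $\psi_i(t):=(\gamma d_i)^t t^{-p_i}-\gamma d_i$. Hence $f^k(x)=\sum_i p_i\log x_i+\ldet\!\big(\Diag(\psi(x))+\gamma C_k\big)$, and the term $\sum_i p_i\log x_i$ --- which is the strictly concave term that absorbs the non-concavity in the MESP analysis --- cancels in the difference, leaving
\[
f(x)=G(\psi(x)),\qquad G(u):=\ldet\!\big(\Diag(u)+\gamma C_1\big)-\ldet\!\big(\Diag(u)+\gamma C_2\big),
\]
with $G$ defined on the convex set $\{u:\Diag(u)+\gamma C_1\succ 0\}$; on $0<x\le\mathbf e$ one will see $\psi(x)\ge\mathbf 0$, so $\Diag(\psi(x))+\gamma C_k\succeq\gamma C_k$ (and $\succ 0$ on the open box, with boundary degeneracies handled by continuity when $C_1$ is only semidefinite).

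Next I would establish two facts. (i) \emph{$G$ is concave and coordinatewise nondecreasing}: since $C_1\preceq C_2$, on the domain $P:=(\Diag(u)+\gamma C_1)^{-1}\succeq Q:=(\Diag(u)+\gamma C_2)^{-1}\succ 0$, so $[\nabla G(u)]_i=P_{ii}-Q_{ii}\ge 0$, while $\nabla^2 G(u)=Q^{\circ 2}-P^{\circ 2}$ (entrywise squares) is $\preceq 0$ because $P^{\circ 2}-Q^{\circ 2}=P\circ(P-Q)+(P-Q)\circ Q\succeq 0$ by the Schur product theorem. (ii) \emph{A scalar estimate}: write $\psi_i(t)=h_i(t)-\gamma d_i$ with $h_i(t):=(\gamma d_i)^t t^{-p_i}=e^{m_i(t)}$ and $m_i(t):=t\ln(\gamma d_i)-p_i\log t$, so $\psi_i'=h_i m_i'$ and $\psi_i''=h_i\big(m_i''+(m_i')^2\big)$; then for $t\in(0,1]$ the hypotheses $p_i\ge 1$ and $\ln(\gamma d_i)\le p_i-\sqrt{p_i}$ yield $m_i'(t)\le-\sqrt{p_i}<0$ (whence $\psi_i$ is strictly decreasing on $(0,1]$ and $\psi_i(x_i)\ge\psi_i(1)=0$) and, crucially, $(m_i'(t))^2\ge m_i''(t)$ --- which is precisely what the bound $\gamma d_i\le\exp(p_i-\sqrt{p_i})$ buys.

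Then I would assemble the Hessian. By the chain rule, $\nabla^2 f(x)=\Diag(\psi'(x))\,\nabla^2 G(\psi(x))\,\Diag(\psi'(x))+\Diag\!\big([\nabla G(\psi(x))]_i\,\psi_i''(x_i)\big)$. Writing $P,Q$ for the inverses at $u=\psi(x)$, $R:=P-Q\succeq 0$, and $a:=\psi'(x)$, and using $\nabla^2 G(\psi(x))=-(R\circ R+2\,Q\circ R)$, $[\nabla G(\psi(x))]_i=R_{ii}$, the identity $\Diag(a)(X\circ Y)\Diag(a)=(\Diag(a)X\Diag(a))\circ Y$, and $R+2Q=P+Q$, this rearranges to
\[
-\nabla^2 f(x)=\big(\Diag(a)(P+Q)\Diag(a)\big)\circ R-\Diag\!\big(R_{ii}\,\psi_i''(x_i)\big).
\]
Here the hypothesis $D\succeq C_k$ enters: it gives $\Diag(\psi(x))+\gamma C_k\preceq\Diag(\psi(x))+\gamma D=\Diag(h_i(x_i))$, hence $P,Q\succeq\Diag(1/h_i(x_i))$, hence $\Diag(a)(P+Q)\Diag(a)\succeq 2\,\Diag(a_i^2/h_i(x_i))$; Schur-multiplying this relation by $R\succeq 0$ and using that the Schur product of a diagonal matrix with $R$ is diagonal, one gets $-\nabla^2 f(x)\succeq\Diag\!\big(R_{ii}\,(2a_i^2/h_i(x_i)-\psi_i''(x_i))\big)$. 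Finally $2a_i^2/h_i-\psi_i''=h_i\big((m_i')^2-m_i''\big)\ge 0$ by (ii) and $R_{ii}\ge 0$, so the right-hand side is positive semidefinite and $\nabla^2 f(x)\preceq 0$.

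The hard part will be the Hessian step: because $-\ldet$ is convex and $\psi_i$ is (strictly) convex, there is no soft composition argument, and one must extract from the Schur product $\big(\Diag(a)(P+Q)\Diag(a)\big)\circ R$ a diagonal lower bound whose entries carry \emph{exactly} the factor $R_{ii}$ that also multiplies the ``bad'' term $\psi_i''$, so that the two match and concavity collapses to the scalar inequality $(m_i')^2\ge m_i''$. It is $D\succeq C_2\succeq C_1$ that makes this matching possible (bounding $P$ and $Q$ below by $\Diag(1/h_i)$), while $p\ge\mathbf e$ together with $\gamma d_i\le\exp(p_i-\sqrt{p_i})$ is exactly what gives $\psi_i\ge 0$, $m_i'<0$, and $(m_i')^2\ge m_i''$; by contrast, in the MESP case the analogous leftover is absorbed by the term $\sum_i p_i\log x_i$, which cancels here.
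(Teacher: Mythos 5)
Your argument is correct, but note that the paper itself gives no proof of this statement: it is imported verbatim as \cite[Thm.~4.1]{AFLW_Remote}, so there is no in-paper proof to compare against. Judged on its own, your proof is sound and complete in all essential steps. The factorization $\Diag((\gamma d)^x)+\gamma\Diag(x^{p/2})(C_k-D)\Diag(x^{p/2})=\Diag(x^{p/2})\bigl(\Diag(\psi(x))+\gamma C_k\bigr)\Diag(x^{p/2})$ with $\psi_i(t)=(\gamma d_i)^t t^{-p_i}-\gamma d_i$ checks out, and it cleanly exhibits the cancellation of the $\sum_i p_i\log x_i$ term in the difference $f^1-f^2$ — which is exactly why the MESP-style argument must be replaced by something that does not lean on that strictly concave term. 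The Hessian identities ($\nabla^2\ldet(\Diag(u)+A)=-M^{-1}\circ M^{-1}$, the chain-rule decomposition, $\nabla^2G=-(P+Q)\circ R$ with $R=P-Q\succeq0$), the use of $D\succeq C_k$ to get $P,Q\succeq\Diag(1/h_i(x_i))$, the Schur-product monotonicity step, and the reduction to the scalar inequality $(m_i')^2\ge m_i''$ (equivalent, on $t\in(0,1]$ with $p_i\ge1$, to $\log(\gamma d_i)\le(p_i-\sqrt{p_i})/t$, which follows from the hypothesis) are all verified correctly. This is essentially the same Hadamard-product Hessian mechanics as in the original source, organized more transparently via the substitution $u=\psi(x)$. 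The only loose end is the one you flag yourself: when $C_1$ (or $C_2$) is merely positive semidefinite, $\Diag(\psi(x))+\gamma C_k$ can be singular at points with some $x_i=1$, where $f^k=-\infty$ and the difference $f^1-f^2$ is not even well defined; your "by continuity" remark does not fully resolve this, but the theorem statement itself (with $C_1,C_2\in\mathbb{S}^n_+$) shares the ambiguity, and on the open box, or whenever $C_1\succ0$, your argument is airtight.
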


\begin{theorem}[\rm\protect{\citep*{AFLW_Remote}, Equation 15}]\label{cor:best-p-nlp}
     Assume that  $C_1$\,, $C_2$\,, $\gamma$, $p$ and $d$ satisfy the conditions of Theorem \ref{thm:nlpbound-mersp}. The least value of ${\znlpthing}(C_1\,,C_2\,,s\,;1,\mathbf{e})$
     is obtained using,     for $i \in N$,
    \begin{align*}
        p_i:=\begin{cases}
        1, &    \text{for }\gamma d_i\leq 1;
        \\
        \left(1+\sqrt{1+4\log(\gamma d_i)}\right)^2/4,\quad &\text{for }\gamma d_i> 1.
    \end{cases}
    \end{align*}
\end{theorem}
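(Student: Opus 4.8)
The plan is to reduce the statement to two facts: first, that the set of exponent vectors $p$ permitted by the hypotheses of Theorem~\ref{thm:nlpbound-mersp} (with $C_1,C_2,\gamma,d$ held fixed) is exactly the upper set $\{p : p_i \ge \bar p_i \text{ for all } i \in N\}$, where $\bar p_i$ is the value displayed in the statement; and second, that the bound $\znlpthing(C_1,C_2,s)$ is coordinatewise nondecreasing in $p$ on that set. Granting both, and observing that $\bar p := (\bar p_1,\dots,\bar p_n)^\top$ is itself feasible, the componentwise-least feasible point $\bar p$ minimizes the bound, which is the claim.

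The first fact I would handle directly. With $\gamma,d$ fixed, the only constraint that Theorem~\ref{thm:nlpbound-mersp} places on $p$ is ``$p_i \ge 1$ and $\log(\gamma d_i) \le g(p_i)$'' where $g(u):= u - \sqrt u$. Since $g$ is strictly increasing on $[1,\infty)$ with $g(1)=0$, when $\gamma d_i \le 1$ the second inequality is implied by $p_i\ge 1$, so $\bar p_i = 1$; when $\gamma d_i > 1$ it forces $p_i \ge \bar p_i$, with $\bar p_i$ the unique root of $g(p_i) = \log(\gamma d_i)$ exceeding $1$. Substituting $q=\sqrt{p_i}$ and taking the positive root of $q^2 - q - \log(\gamma d_i) = 0$ yields $\bar p_i = \bigl(1+\sqrt{1+4\log(\gamma d_i)}\bigr)^2/4$. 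A one-line check confirms that $\bar p$ satisfies all the hypotheses in either case.

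The second fact is the crux. Since $\znlpthing(C_1,C_2,s)$ is the maximum of $f = f^1 - f^2$ over the compact set $\{x : \mathbf{e}^\top x = s,\ x\in[0,1]^n\}$, it suffices to show that $f(x)$ is nondecreasing in each $p_i$ for every fixed $x$ in this set. If $x_i = 0$, neither $f^k$ depends on $p_i$, so I may fix $i$ with $x_i \in (0,1]$. Writing $E_x := \Diag((\gamma d)^x)$, $T := \Diag(x^{p/2})$, $M_k := C_k - D$, and $A_k := (E_x + \gamma T M_k T)^{-1}$ (each $E_x + \gamma T M_k T$ being positive definite on $0 < x \le \mathbf{e}$ under the hypotheses of Theorem~\ref{thm:nlpbound-mersp}), I would differentiate $f^k(x) = \ldet(E_x + \gamma T M_k T)$ using $\partial T/\partial p_i = \tfrac12(\log x_i)\,x_i^{p_i/2}\,\mathbf{e}_i\mathbf{e}_i^\top$; the key simplification is to write $\gamma T M_k T = (E_x + \gamma T M_k T) - E_x$, which collapses the $\ldet$-derivative to the scalar $(\log x_i)\bigl(1 - (A_k)_{ii}(E_x)_{ii}\bigr)$, and hence
\[
\frac{\partial f}{\partial p_i} \;=\; (\log x_i)\,(E_x)_{ii}\bigl((A_2)_{ii} - (A_1)_{ii}\bigr).
\]
Finally I would invoke $C_1 \preceq C_2$, which gives $M_1 \preceq M_2$ and hence $E_x + \gamma T M_1 T \preceq E_x + \gamma T M_2 T$ (congruence by $T$ preserves the Loewner order); both sides are positive definite, so their inverses reverse the order and $(A_1)_{ii} \ge (A_2)_{ii}$. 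Combined with $\log x_i \le 0$ and $(E_x)_{ii} = (\gamma d_i)^{x_i} > 0$, this gives $\partial f/\partial p_i \ge 0$, which is the second fact.

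I expect the derivative simplification to be the only nonroutine step: the payoff of the rewrite $\gamma T M_k T = (E_x + \gamma T M_k T) - E_x$ is that the $C_k$-dependence then enters only through the additive term $\gamma T(C_k-D)T$, so monotonicity of matrix inversion on $\mathbb{S}^n_{++}$ does the rest. The one point requiring care is to confirm that $E_x + \gamma T M_k T$ stays positive definite across the whole feasible region of $x$ and $p$, so that $A_k$ and the stated derivative are legitimate; this is precisely where the hypotheses $D \succeq C_k$ and $0 < \gamma d_i \le \exp(p_i - \sqrt{p_i})$ of Theorem~\ref{thm:nlpbound-mersp} are used.
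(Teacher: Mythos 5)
Your argument is correct, but note that the paper does not prove this statement at all: it is imported verbatim from \cite[Eq.~15]{AFLW_Remote} (where it is established for the NLP bound construction), so there is no in-paper proof to compare against. Your two-step reduction is sound: the hypotheses of Theorem~\ref{thm:nlpbound-mersp} constrain $p$ exactly through $p_i\ge 1$ and $\log(\gamma d_i)\le p_i-\sqrt{p_i}$, and since $u\mapsto u-\sqrt{u}$ is increasing on $[1,\infty)$ with value $0$ at $u=1$, the feasible set is indeed the box $\prod_i[\bar p_i,\infty)$ with least element $\bar p$ as displayed. The derivative computation also checks out: writing $\gamma T M_k T = A_k^{-1}-E_x$ gives $\partial f^k/\partial p_i = (\log x_i)\bigl(1-(E_x)_{ii}(A_k)_{ii}\bigr)$, and the Loewner monotonicity of inversion together with $\log x_i\le 0$ yields $\partial f/\partial p_i\ge 0$, so the pointwise (hence max) bound is nondecreasing in $p$ and is minimized at $\bar p$. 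The one place where you are slightly optimistic is the parenthetical claim that $E_x+\gamma TM_kT$ is positive definite on all of $0<x\le\mathbf{e}$: the decomposition $E_x+\gamma TM_kT=(E_x-\gamma TDT)+\gamma TC_kT$ shows the first summand is a nonnegative diagonal matrix whose $i$-th entry vanishes exactly when $x_i=1$ (this is the inequality $(\gamma d_i)^{x_i}\ge\gamma d_i x_i^{p_i}$, the same fact $\beta_j\ge 0$ used in the proof of Lemma~\ref{lem:nlp_aug_psi_decrease}), so when some $x_i=1$ and $C_k$ is singular the matrix can be singular and $f^k=-\infty$; this boundary degeneracy is glossed over in the paper's own Lemma~\ref{lem:nlp_aug_psi_decrease} as well and does not affect the conclusion, but you should state it as holding on the open box or handle the boundary by continuity. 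Overall your proposal supplies a complete, self-contained justification for a result the paper only cites, and its monotonicity-in-a-parameter-via-trace-derivative strategy closely parallels the paper's own proof technique for Lemma~\ref{lem:nlp_aug_psi_decrease}.
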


For the case of $\psi:=1$, $\Phi:=\mathbf{e}$, \citep*{AFLW_Remote} proposed  three strategies for selecting the parameters $d$ and $\gamma$ in \ref{hNLP}, aiming at obtaining a good bound for  
\ref{MEDP} while assuring convexity of \ref{hNLP}. Once these parameters are chosen,  $p$ is determined using Theorem \ref{cor:best-p-nlp}. 

We note that these ideas were originally introduced for the ``NLP bound'' (in either its original or complementary form) for \ref{MESP}; see \citep*{AFLW_Using}. Considering $D:=\Diag(d)$, the three strategies are:

\begin{itemize}
    \item \hypertarget{Identitytarget}{\IdentityNLP}:
$
        D:= \rho I_n\,,~ \rho:=\lambda_{1}(C_2)\,,~ \gamma:= 1/\rho.
$
By Theorem \ref{cor:best-p-nlp}, we have then that $p=
\mathbf{e}$.
    \item \hypertarget{Diagonaltarget}{\DiagonalNLP}: 
$
        D:= \rho\Diag(C_2),~ \rho:=\lambda_{1}(\Diag(C_2)^{ \scriptscriptstyle -1/2}C_2\Diag(C_2)^{\scriptscriptstyle -1/2}),~ \gamma\in  [1/d_{\max}\,,1/d_{\min}].
$
    \item \label{eq:nlp-trace} 
\phantom{.}
\vspace{-21pt}
    \begin{equation}
\phantom{.}\hspace{-27.5pt}\hypertarget{Tracetarget}{\TraceNLP}\!\!:        D:= \argmin_Y\{\Tr(Y)\,:\,Y-C_2\succeq 0,\,Y \text{ diagonal}\},~ \gamma\in  [1/d_{\max}\,,1/d_{\min}].
    \end{equation}
\end{itemize}

From Theorem \ref{thm:nlpbound-mersp}, we  see that when $\psi=1$ and $\Phi=\mathbf{e}$, a suitable choice of the parameters $\gamma$, $d$, and $p$ guarantees the convexity of \ref{hNLP}. Next, building on Proposition \ref{prop:exact_rel} and Theorem \ref{thm:nlpbound-mersp}, we demonstrate  that for \emph{any} $\Phi\in\mathbb{R}^n_{++}$\,, and suitable choices of $\psi$, $\gamma$, $d$, and $p$, \ref{hNLP} constitutes a convex relaxation of \ref{MEDP}.

\begin{corollary}\label{cor:hypernlpbound-mersp}
    Let $\psi>0$ and $\Phi\in\mathbb{R}^n_{++}$\,. Assume that $C_2 \succeq  \psi C_1$\,. If         $\Diag(d) \succeq  \Diag(\Phi)C_2\Diag(\Phi)$\,, $p \geq \mathbf{e}$, $\gamma > 0$, and $0<\gamma d_i\leq \exp(p_i - \sqrt{p_i})$ for $i \in N$, then $f(\cdot\,;\psi,\Phi)$ is concave on $0 < x \leq \mathbf{e}$. 
\end{corollary}

\begin{proof}
Notice that if  $C_2\succeq \psi {C}_1$ then  $\Diag(\Phi) C_2 \Diag(\Phi)\succeq \psi \Diag(\Phi) {C}_1 \Diag(\Phi)$. Then, the result immediately follows from Theorem \ref{thm:nlpbound-mersp}.
\qed
\end{proof}

Finally, we can 
extend any of the $\IdentityNLP$, $\DiagonalNLP$, or $\TraceNLP$  strategies described above for selecting the parameters $d$ and $\gamma$ in  \ref{hNLP} when $\psi=1$ and $\Phi=\mathbf{e}$, to the more general case where $\psi>0$ and $\Phi\in\mathbb{R}^n_{++}$\,. For the computation of the parameters, we simply  replace $C_2$ by $\Diag(\Phi) C_2 \Diag(\Phi)$. We define the relaxations obtained this way, by hNLP-Id, hNLP-Di, and hNLP-Tr (selecting $p$ using Theorem \ref{cor:best-p-nlp}).
It is important to note that for given matrices $C_1$ and $C_2$\,, and parameter $\psi>0$, if $C_2\succeq \psi C_1$\,, then all assumptions in Corollary \ref{cor:hypernlpbound-mersp} hold if $d$ and $\gamma$ are determined by any of the three strategies and $p$ is determined by Theorem \ref{cor:best-p-nlp}; that is, hNLP-Id, hNLP-Di, and hNLP-Tr are convex relaxations of \ref{hNLP}.

 Coming back to \ref{MERSP}, suppose that the covariance matrix $C$ is positive definite. Then both $B$ and $B_T$ are positive definite, and moreover $B_T^{-1} \succeq B^{-1}$. Hence, by setting $C_1:= B^{-1}$, $C_2:= B_T^{-1}$, and choosing $\psi\leq 1$,  we obtain $C_2\succeq \psi C_1$\,. Therefore,  for any given $\Phi\in\mathbb{R}^n_{++}$\,, if the parameters $\gamma$, $d$, and $p$  satisfy the remaining assumptions of Corollary \ref{cor:hypernlpbound-mersp},   
then \ref{hNLP} is a convex optimization problem. In what follows, we refer to this formulation as \NLPcompMERSP,
 which we 
call the ``complementary hyper-scaled NLP bound'' (complementary hNLP bound) for \ref{MERSP}. Specifically, this last bound is given by ${\znlpthing}(B^{-1}\,,B_T^{-1}\,,n-s\,;\psi,\Phi) +\ldet(B) -\ldet(B_T)$.
Furthermore, in the particular case where  $\psi:=1$ and $\Phi:=\mathbf{e}$, complementary hNLP bound reduces exactly to the complementary NLP bound for \ref{MERSP} from \citep*{AFLW_Remote}. 

Our main motivation in what follows, after the next section, is to further exploit the \ref{hNLP} bound for \ref{MEDP} by allowing $\psi \neq 1$ and $\Phi \neq \mathbf{e}$, with the aim of strengthening the bound while preserving the convexity of the resulting relaxation. Moreover, specifically for \ref{MERSP}, we will demonstrate that this framework remains applicable when $B$ and $B_T$ are singular.  Before pursuing these developments, in the next section we compare the NLP bound for \ref{MERSP} with another bound also available in the literature.  


\section{Comparison between MERSP bounds}\label{sec:compare-mersp-bounds}

In this section, we theoretically compare the complementary NLP bound for \ref{MERSP}, 
with another known upper bound, namely
the spectral bound for \ref{MERSP}.
We demonstrate dominance of the  complementary NLP bound for \ref{MERSP}, under some
weak conditions. 
In doing so, we 
can see that our \ref{hNLP} bound dominates the spectral bound  under the same conditions.

\citep*{AFLW_Remote} introduced the \emph{spectral bound} for \ref{MERSP} when $C \succ 0$, given by
    \begin{equation}\label{sb}
    \hypertarget{zspectraltarget}{\zspectralthing}(B,B_T\,,s) := \min_{\eta \in \mathbb{R}^n}  v(\eta):=
    \textstyle\sum_{\ell=1}^s\log \lambda_\ell( D_{\eta} B D_{\eta})- \log \lambda_{n-\ell+1}( D_{\eta} B_T D_{\eta}),
    \end{equation}
    where  
    $D_{\eta}\in \mathbb{S}^n_{++}$ is  the diagonal matrix defined by
    $
    D_{\eta}[i,i]:= 
        \exp\left\{ \eta_i/2\right\}$,  for $i \in N$.

 Recently, \citep*{MESP2DOPT} established that the NLP bound for \ref{MESP} with the $\IdentityNLP$\ strategy dominates the spectral bound for \ref{MESP} when $s$ is less than the multiplicity of the largest eigenvalue of $C$. Motivated by this result, we  demonstrate  that, under suitable conditions,
 the  complementary  NLP bound for \ref{MERSP} dominates the spectral bound for \ref{MERSP}. 
\begin{theorem}\label{thm:nlp_dom_spec}
    If $C \succ 0$ and any of the following successively weaker conditions holds,
    \begin{enumerate}
    \item $t \leq s$,
    \item $\rank(C[N,T]) \leq s$,
    \item   $\textstyle\sum_{\ell=1}^{n-s}\left(\log \lambda_{n-\ell+1}( D_{
        \hat\eta} B D_{\hat\eta})  - \log \lambda_{\ell}( D_{\hat\eta} B_T D_{\hat\eta} )\right) \leq 0$, where $\hat\eta := \argmin v(\eta)$ (see \eqref{sb}),
    \end{enumerate}
    \smallskip
 then, for any of the parameters choices considered -- namely {\rm\IdentityNLP}, {\rm\DiagonalNLP}, or {\rm\TraceNLP}  -- the  complementary  {\rm NLP} bound dominates the spectral bound for {\rm\ref{MERSP}}. Specifically, we have 
     \[
     \znlp(B^{-1},B_T^{-1},n-s;1,\mathbf{e}) + \ldet(B) - \ldet(B_T) \leq  \zspectral(B,B_T\,,s).
     \]
\end{theorem}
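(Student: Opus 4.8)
The plan is to sandwich the complementary NLP bound between $\ldet B - \ldet B_T$ (from above) and $\zspectral(B,B_T\,,s)$ (from below, using the hypotheses). The key observation, which makes the ``from above'' part essentially free, is that $\znlp(B^{-1},B_T^{-1},n-s)\le 0$. Since $C\succ 0$ we have $B\succeq B_T\succ 0$ (here $B_T$ is the Schur complement of $C[T,T]$ in $C$), hence $B_T^{-1}\succeq B^{-1}\succ 0$, so for suitable parameters $D,p,\gamma$ the data $(C_1,C_2,n-s):=(B^{-1},B_T^{-1},n-s)$ makes \eqref{nlp_bound} a well-defined concave relaxation (Theorem~\ref{thm:nlpbound-mersp}). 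Writing $f=f^1-f^2$ and $M_k(x):=\Diag((\gamma d)^x)+\gamma\Diag(x^{p/2})(C_k-D)\Diag(x^{p/2})$, so that $f^k(x)=\ldet M_k(x)$, the matrices $M_1(x)$ and $M_2(x)$ differ only in the term containing $C_k$; since $C_1\preceq C_2$ and $\gamma>0$ we get $M_1(x)\preceq M_2(x)$, hence $f(x)=\ldet M_1(x)-\ldet M_2(x)\le 0$ at every feasible $x$ where $f$ is finite, by monotonicity of $\ldet$ on the positive-definite cone. This bound is insensitive to the admissible choice of $D,p,\gamma$, so $\znlp(B^{-1},B_T^{-1},n-s)\le 0$ and the complementary NLP bound is at most $\ldet B-\ldet B_T$.

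Next I would reinterpret hypothesis~(3). For any $\eta$, the sum $v(\eta)$ from \eqref{sb} and the quantity $\sum_{\ell=1}^{n-s}\bigl(\log\lambda_{n-\ell+1}(D_\eta BD_\eta)-\log\lambda_\ell(D_\eta B_TD_\eta)\bigr)$ together involve each of the $n$ eigenvalues of $D_\eta BD_\eta$ once with sign $+$ and each of the $n$ eigenvalues of $D_\eta B_TD_\eta$ once with sign $-$; hence their sum equals $\ldet(D_\eta BD_\eta)-\ldet(D_\eta B_TD_\eta)=\ldet B-\ldet B_T$, the $D_\eta$ contributions cancelling. Taking $\eta=\hat\eta$, hypothesis~(3) is thus \emph{equivalent} to $\zspectral(B,B_T\,,s)=v(\hat\eta)\ge \ldet B-\ldet B_T$. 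Combined with the previous paragraph, this proves the theorem whenever (3) holds, since then $\znlp(B^{-1},B_T^{-1},n-s)+\ldet B-\ldet B_T\le \ldet B-\ldet B_T\le \zspectral(B,B_T\,,s)$.

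It then remains to check (1)$\Rightarrow$(2)$\Rightarrow$(3). The implication (1)$\Rightarrow$(2) is immediate, as $\rank(C[N,T])\le |T|=t\le s$. For (2)$\Rightarrow$(3): because $C[T,T]\succ 0$, $B-B_T=C[N,T]C[T,T]^{-1}C[T,N]\succeq 0$ with $\rank(B-B_T)=\rank(C[N,T])\le s$; thus $E:=D_{\hat\eta}(B-B_T)D_{\hat\eta}$ is positive semidefinite of rank at most $s$ and $D_{\hat\eta}BD_{\hat\eta}=D_{\hat\eta}B_TD_{\hat\eta}+E$. Weyl's inequality, with $\lambda_{s+1}(E)=0$, gives $\lambda_{i+s}(D_{\hat\eta}BD_{\hat\eta})\le\lambda_i(D_{\hat\eta}B_TD_{\hat\eta})$ for $i=1,\dots,n-s$. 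Since $\{\,n-\ell+1:\ell=1,\dots,n-s\,\}=\{\,s+1,\dots,n\,\}$, the first sum in (3) is $\sum_{i=1}^{n-s}\log\lambda_{i+s}(D_{\hat\eta}BD_{\hat\eta})\le\sum_{i=1}^{n-s}\log\lambda_i(D_{\hat\eta}B_TD_{\hat\eta})$, which is the second sum in (3); hence the left-hand side of (3) is $\le 0$.

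I do not anticipate a genuine obstacle: the one substantive ingredient is the pointwise inequality $f(x)\le 0$ on the complemented data, and the rest is bookkeeping --- chiefly keeping the eigenvalue indices straight in the reinterpretation of (3) and in the Weyl step. The only place calling for a word of care is transferring $f(x)\le 0$ to $\znlp(B^{-1},B_T^{-1},n-s)\le 0$: this is fine because \eqref{nlp_bound} maximizes a concave function that is finite at, e.g., $x=\tfrac{n-s}{n}\mathbf{e}$, so its optimal value is attained at a point where $M_1(x)\succ 0$ and there $f(x)\le 0$. As an alternative route, one can first prove the complementation identity $\zspectral(B,B_T\,,s)=\zspectral(B^{-1},B_T^{-1},n-s)+\ldet B-\ldet B_T$ (via the change of variable $\eta\mapsto-\eta$ together with $\lambda_i(D_\eta M D_\eta)=1/\lambda_{n-i+1}(D_{-\eta}M^{-1}D_{-\eta})$), which reduces the theorem to showing $\znlp(B^{-1},B_T^{-1},n-s)\le\zspectral(B^{-1},B_T^{-1},n-s)$, hence --- given $\znlp(B^{-1},B_T^{-1},n-s)\le 0$ --- to nonnegativity of the complemented spectral bound, which is again precisely what (1)--(3) guarantee.
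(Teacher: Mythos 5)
Your proposal is correct and follows essentially the same route as the paper's proof: the inequality $M_1(x)\preceq M_2(x)$ forces the NLP part of the complementary bound below $\ldet B-\ldet B_T$, the spectral bound is rewritten to expose $\ldet B-\ldet B_T$ so that condition (3) is exactly $\zspectral\geq \ldet B-\ldet B_T$, and (2)$\Rightarrow$(3) comes from an eigenvalue comparison using $\rank(B-B_T)=\rank(C[N,T])\leq s$. The only cosmetic difference is that you invoke Weyl's inequality with $\lambda_{s+1}(E)=0$ where the paper uses Cauchy interlacing on the eigenbasis of $D_{\hat\eta}(B-B_T)D_{\hat\eta}$; both yield the same bound $\lambda_{s+\ell}(D_{\hat\eta}BD_{\hat\eta})\leq\lambda_{\ell}(D_{\hat\eta}B_TD_{\hat\eta})$.
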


\begin{proof} 
    Let $C_1 := B^{-1}$ and $C_2 := B_T^{-1}$. Let $M_k(x) := \Diag((\gamma d)^x) +  
\gamma\Diag(x^{p/2})(C_k-D)\Diag(x^{p/2}))$ for $k=1,2$. The  complementary NLP bound for \ref{MERSP}, is given by
\[
 \znlp(B^{-1},B_T^{-1}\,,n-s\,;1,\mathbf{e}) + \ldet (B) - \ldet(B_T) =  \ldet(M_1(x)) - \ldet(M_2(x)) + \ldet(B) - \ldet(B_T).
\]
 Notice that $C_2 \succeq C_1$\,. Then, as all other  conditions in Theorem \ref{thm:nlpbound-mersp} hold for the parameters choices considered,  we have that $M_2(x) \succeq M_1(x)$, for all $x \in [0,1]^n$. Thus, from \citep*[Corollary 7.7.4 (b)]{HJBook}, we have that $\ldet(M_1(x)) - \ldet(M_2(x)) \leq 0$.

Next, for the spectral bound, we have that
\begin{align*}
        \zspectral&(B,B_T\,,s) = \textstyle\sum_{\ell=1}^s\log \lambda_\ell\left( D_{\hat\eta}BD_{\hat\eta}\right)- \log \lambda_{n-\ell+1}\left( D_{\hat\eta}B_TD_{\hat\eta}  \right)\\
        &\!\!\!\!\!\!=\ldet\left( D_{\hat\eta}BD_{\hat\eta}\right)- \ldet\left(D_{\hat\eta} B_T D_{\hat\eta}  \right)+ \textstyle\sum_{\ell=s+1}^n \log \lambda_{n-\ell+1}\left( D_{\hat\eta}B_TD_{\hat\eta}  \right)-\log \lambda_\ell\left( D_{\hat\eta}BD_{\hat\eta}\right)\\
        &\!\!\!\!\!\!=\ldet\left( B\right)- \ldet\left( B_T  \right) + \textstyle\sum_{\ell=1}^{n-s}\log \lambda_{\ell}\left( D_{\hat\eta}B_TD_{\hat\eta}  \right)-\log \lambda_{n-\ell+1}\left( D_{\hat\eta}BD_{\hat\eta}\right).
    \end{align*}
    So, the difference between the complementary NLP bound and the spectral bound for \ref{MERSP} is given by
    \begin{align*}
    &\ldet(M_1(x)) - \ldet(M_2(x)) + \textstyle\sum_{\ell=1}^{n-s}\log \lambda_{n-\ell+1}\left( D_{\hat\eta}BD_{\hat\eta}\right)  - \log \lambda_{\ell}\left( D_{\hat\eta}B_TD_{\hat\eta}  \right)\\
        &\qquad\leq \textstyle\sum_{\ell=1}^{n-s}\log \lambda_{n-\ell+1}\left( D_{\hat\eta}BD_{\hat\eta}\right)  - \log \lambda_{\ell}\left( D_{\hat\eta}B_TD_{\hat\eta}  \right).
    \end{align*}
    Finally, considering that we satisfy the condition in:
    \begin{itemize}
        \item item 3, the result trivially follows;
        \item item 2, then
        \begin{align*}
        s \geq \rank(C[N,T]) &= \rank(C[N,T](C[T,T])^{-1/2}) = \rank(C[N,T](C[T,T])^{-1}C[T,N])\\ 
        &= \rank(C[N,N] - C_T[N,N]) =   \rank(D_{\hat\eta}(B-  B_T)D_{\hat\eta}).
        \end{align*}
        Let $Q \Theta Q^\top$ be the eigendecomposition of $D_{\hat\eta}(B- B_T)D_{\hat\eta}$ with $Q := \begin{bmatrix}\underset{\scriptscriptstyle n\times s}{Q_1}& \!\underset{\scriptscriptstyle n\times (n-s)}{Q_2}\end{bmatrix}$ and $\Theta:=\Diag(\theta_1\,,\theta_2\,,\dots,\theta_n)$ with $\theta_1 \geq \theta_2 \geq \dots \geq \theta_{s} \geq \theta_{s+1} = \theta_{s+2} = \dots =  \theta_n = 0$\,. Then  $Q_2^\top D_{\hat\eta}BD_{\hat\eta} Q_2 =  Q_2^\top D_{\hat\eta} B_TD_{\hat\eta} Q_2$\,.
        From \citep*[Corollary 4.3.16]{HJBook}, for $\ell = 1,\dots,n-s$,  we have that
        \begin{align*}
            &\lambda_\ell(D_{\hat\eta}B_TD_{\hat\eta}) \geq \lambda_{\ell}(Q_2^\top D_{\hat\eta}B_TD_{\hat\eta}Q_2) = \lambda_{\ell}(Q_2^\top D_{\hat\eta}BD_{\hat\eta}Q_2)\geq \lambda_{s+\ell}(D_{\hat\eta}BD_{\hat\eta})\Rightarrow\\
            &\log\lambda_\ell(D_{\hat\eta}B_TD_{\hat\eta}) \geq \log\lambda_{s+\ell}(D_{\hat\eta}BD_{\hat\eta}),
        \end{align*}
         then 
         $
         \sum_{\ell=1}^{n-s}\log \lambda_{n-\ell+1}\left( D_{\hat\eta}BD_{\hat\eta}\right)  - \log \lambda_{\ell}\left( D_{\hat\eta}B_TD_{\hat\eta}  \right) \leq 0$
         and the result follows from item 3;
         \item item 1, then, as $t \leq s \leq n$, we have that  $\rank(C[N,T])\leq s$ and the result follows from item 2. \qed
    \end{itemize}
\end{proof}


\section{\texorpdfstring{Computing the parameter $\psi$}{Computing the parameter psi}}\label{sec:aug-nlp-bound}

In this section, we consider 
 $\Phi$ fixed in \ref{hNLP}. Without loss of generality, we set $\Phi:=\mathbf{e}$, and we investigate how to select $\psi$, with the goal of obtaining the best (least) possible bound, while assuring the convexity of \ref{hNLP}. 

As mentioned earlier,
\hypertarget{NLPcompMERSP}{\NLPcompMERSP}
refers to the hyper-scaled \ref{hNLP} relaxation using 
$C_1:=B^{-1}$ and $C_2:=B_T^{-1}$, which we 
call the \emph{complementary hyper-scaled NLP bound} for \ref{MERSP}.
Likewise, 
\hypertarget{NLPorigMERSP}{ \NLPorigMERSP}\, 
denotes the \ref{hNLP} relaxation using 
$C_1:=B$ and $C_2:=B_T$\,,
 which we 
call the \emph{(original) hyper-scaled NLP bound} for \ref{MERSP}.
Note that when $C \succ 0$,
we have $B_T^{-1}\succeq \psi B^{-1}$ for all $\psi\leq 1$. Furthermore,  as $B$ and $B_T$ are positive definite in this case,  there always exists some $\psi\! > \!0$ such that $B_T \succeq \psi B$. In the next section, we will demonstrate that a similar relation can also be established when  $C\succeq 0$ under additional assumptions, namely, there exists a $\psi > 0$ such that $B_T \succeq \psi B$.

Finally, given $C_1$\,, $C_2$ and $\psi>0$ satisfying $C_2\succeq \psi C_1$\,,  it is possible to choose the parameters $\gamma$, $d$, and $p$ in \ref{hNLP} so that all remaining assumptions in Corollary \ref{cor:hypernlpbound-mersp} are satisfied, thereby ensuring the convexity of  \ref{hNLP}; see, for instance, the 
$\IdentityNLP$, $\DiagonalNLP$ and $\TraceNLP$ parameter-selection strategies. Hence, in the following, we will regard the condition $C_2\succeq \psi C_1$ as sufficient to guarantee the convexity of either \NLPcompMERSP\, or \NLPorigMERSP. 


\subsection{Exact convex relaxation for singular covariance matrices}\label{subsec:singular-case-thms}
In \citep*{AFLW_Remote}, the analysis was restricted to \ref{MERSP} instances with a positive-definite covariance matrix $C$, which directly implies that $B:=C[N,N]$ and $B_T:=C_T[N,N]$ 
are positive definite. As $B_T^{-1}\succeq B^{-1}$, this allowed  the replacement of $C_1$ and $C_2$ respectively by $B^{-1}$ and $B_T^{-1}$ in \ref{hNLP} with $\psi:=1$ and $\Phi:=\mathbf{e}$ leading to a convex relaxation and associated  complementary  NLP bound for \ref{MERSP}. 

Here, we consider a more general class of \ref{MERSP} instances in which $C$ may be singular and we investigate a convex relaxation for this class of instances  also considering the more-general problem \ref{MEDP} and its relaxation \ref{hNLP}. 
First, we observe that \ref{MERSP} is well defined if, for any set $S$ with 
$|S|=s$ such that $C[S,S] \succ 0$, we have that $C_T[S,S]\succ 0$ ---  otherwise \ref{MERSP}  would be unbounded. 
We will present a sufficient condition under which \ref{MERSP} is well defined for all $0<s<n$ and demonstrate that the  hyper-scaled  NLP bound for \ref{MEDP} provides an exact convex relaxation for \ref{MERSP} in this case, even when the covariance matrix $C$ is singular and the complementing principle cannot be applied.

\begin{lemma}\label{lem:exact-rel-singular-case}
    Let $C\succeq 0$. There exists $\psi>0$ such that
$C_T[N,N]\succeq\psi\,C[N,N]$ if and only if
\begin{equation}\label{eq:cond-psi-geq0-exists}
    C[T,T]-C[N,T]^{\top}C[N,N]^{\dagger}C[N,T]\succ 0.
\end{equation}
Moreover, when \eqref{eq:cond-psi-geq0-exists} holds, the relation
$C_T[N,N]\succeq\psi\,C[N,N]$ is satisfied for every
\begin{equation}\label{eq:psi_range}
    \psi\in\left(0,1-\sigma_{1}^2\left(C[T,T]^{-1/2}C[T,N]C[N,N]^{\dagger/2}\right)\right],
\end{equation}
and this interval is nonempty.
\end{lemma}

\begin{proof}
Let $E:=C[T,T]^{-1/2}C[T,N]C[N,N]^{\dagger/2}$. Then,
\begin{equation}\label{eq:svd_eig_relation_singular_case}
    \sigma_{1}^2(E) = \lambda_1(EE^\top) =  \lambda_1\left(C[T,T]^{-1/2}C[N,T]^\top (C[N,N])^\dagger C[N,T] C[T,T]^{-1/2}\right),
\end{equation}
and
\begin{equation}\label{eq:lmax}
    \begin{array}{ll}
    &C[T,T] - C[N,T]^{\top}(C[N,N])^{\dagger}C[N,T]\succ 0 \Leftrightarrow\\
         &I_t - C[T,T]^{-1/2}C[N,T]^{\top}(C[N,N])^{\dagger}C[N,T]C[T,T]^{-1/2}\succ 0 \Leftrightarrow  \\
         &\sigma_{1}^2(E) < 1.
    \end{array}
\end{equation}
For $\psi\in(0,1)$, we have
\begin{equation}\label{eq:psi_equivalence_singular_case_iff}
    \begin{array}{ll}
         &C_T[N,N] \succeq \psi C[N,N] \Leftrightarrow\\[3pt]
       &(1-\psi)C[N,N] - C[N,T](C[T,T])^{-1}C[T,N]\succeq 0 \Leftrightarrow\\[3pt]
       &C[N,N] - C[N,T]((1-\psi)C[T,T])^{-1}C[T,N]\succeq 0 \Leftrightarrow\\[3pt]
       &\begin{pmatrix}
           C[N,N] & C[N,T]\\
           C[N,T]^{\top} & (1-\psi)C[T,T]
       \end{pmatrix}\succeq 0 \Leftrightarrow\\[9pt]
       &(1-\psi)C[T,T] - C[N,T]^{\top}(C[N,N])^{\dagger}C[N,T]\succeq 0\Leftrightarrow\\[3pt]
        &(1-\psi) I_t - C[T,T]^{-1/2}C[N,T]^\top (C[N,N])^\dagger C[N,T] C[T,T]^{-1/2}\succeq 0 \Leftrightarrow\\[3pt]
        &\sigma_{1}^2(E) \leq 1-\psi,
    \end{array}
\end{equation}
where the fourth equivalence follows from \citep*[Theorem 1.20]{SchurBook} and the last from \eqref{eq:svd_eig_relation_singular_case}. 
\begin{itemize}
   \item[$(i)$] Assume that there exists a $\hat{\psi}>0$ such that $C_T[N,N]\succeq\hat\psi C[N,N]$. Then, because $C[N,N]\succeq 0$, we have that $C_T[N,N]\succeq\psi C[N,N]$ for every $\psi\le\hat\psi$. Therefore, there exists  $\tilde \psi\in(0,1)$ such that  $C_T[N,N]\succeq\tilde \psi C[N,N]$.
  By \eqref{eq:psi_equivalence_singular_case_iff}, $\sigma_1^2(E)\leq 1-\tilde\psi$, thus $\sigma_1^2(E)<1$. Hence, by \eqref{eq:lmax}, \eqref{eq:cond-psi-geq0-exists} holds.

    \item[$(ii)$] Assume that  \eqref{eq:cond-psi-geq0-exists} holds. Then, by \eqref{eq:lmax},  $\sigma_1^2(E)<1$. 
   Let $\psi\in(0,1-\sigma_1^2(E)]$, which is a nonempty interval.  If $\psi=1$, then 
   $\sigma_1^2(E)=0$, and because $C\succeq 0$, we have that $C[N,T]=0$, and hence,  $C_T[N,N]=C[N,N]$. 
   If $\psi<1$,  by  \eqref{eq:psi_equivalence_singular_case_iff},  $C_T[N,N]\succeq\psi C[N,N]$. Therefore, $C_T[N,N]\succeq\psi C[N,N]$ for all $\psi\in(0,1-\sigma_1^2(E)]$.
\end{itemize}
\qed\end{proof}

\begin{theorem}\label{thm:good-case-singular}
    Let $C \succeq 0$. If \eqref{eq:cond-psi-geq0-exists} holds, then for any set $S \subset N$ with $0 < |S| < n$, we have 
        $
        C[S,S] \succ 0~\Leftrightarrow~C_T[S,S]\succ 0
        $.
\end{theorem}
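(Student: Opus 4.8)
The plan is to obtain both implications from Lemma \ref{lem:exact-rel-singular-case}, together with the elementary fact that taking a principal submatrix preserves the Loewner order. I would begin with a preliminary remark: condition \eqref{eq:cond-psi-geq0-exists} forces $C[T,T] \succ C[N,T]^{\top}C[N,N]^{\dagger}C[N,T] \succeq 0$, hence $C[T,T] \succ 0$; therefore $C_T[N,N] = C[N,N] - C[N,T](C[T,T])^{-1}C[T,N]$ is well defined, and for any $S \subseteq N$ its principal submatrix indexed by $S$ is exactly $C_T[S,S] = C[S,S] - C[S,T](C[T,T])^{-1}C[T,S]$.

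For the implication $C_T[S,S] \succ 0 \Rightarrow C[S,S] \succ 0$, I would simply write $C[S,S] = C_T[S,S] + C[S,T](C[T,T])^{-1}C[T,S]$ and note that the second summand is positive semidefinite (as $(C[T,T])^{-1} \succ 0$), so $C[S,S] \succeq C_T[S,S] \succ 0$. This direction in fact uses only $C[T,T] \succ 0$.

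For the converse, $C[S,S] \succ 0 \Rightarrow C_T[S,S] \succ 0$, I would invoke Lemma \ref{lem:exact-rel-singular-case}: since \eqref{eq:cond-psi-geq0-exists} holds, the interval $\left(0, 1-\sigma_1^2\left(C[T,T]^{-1/2}C[T,N]C[N,N]^{\dagger/2}\right)\right]$ is nonempty, so we may fix a scalar $\psi > 0$ in it, and the lemma yields $C_T[N,N] \succeq \psi C[N,N]$. Restricting this inequality to the principal submatrix indexed by $S$ gives $C_T[S,S] \succeq \psi C[S,S]$; since $\psi > 0$ and $C[S,S] \succ 0$, we conclude $C_T[S,S] \succ 0$.

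I do not expect a genuine obstacle; the only point requiring a moment's care is confirming that the admissible interval for $\psi$ in Lemma \ref{lem:exact-rel-singular-case} is nonempty under \eqref{eq:cond-psi-geq0-exists}. This holds because, by congruence with $C[T,T]^{1/2}$, the statement $\sigma_1^2\left(C[T,T]^{-1/2}C[T,N]C[N,N]^{\dagger/2}\right) = \lambda_1\!\left(C[T,T]^{-1/2}C[N,T]^{\top}C[N,N]^{\dagger}C[N,T]C[T,T]^{-1/2}\right) < 1$ is precisely a restatement of \eqref{eq:cond-psi-geq0-exists} (this equivalence is already traced out in the proof of Lemma \ref{lem:exact-rel-singular-case}). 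Everything else is routine manipulation of Schur complements and the order-preservation of principal submatrices.
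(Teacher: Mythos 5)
Your proof is correct and takes essentially the same route as the paper: the substantive direction ($C[S,S]\succ 0 \Rightarrow C_T[S,S]\succ 0$) is, exactly as in the paper, a matter of extracting one admissible $\psi>0$ from Lemma \ref{lem:exact-rel-singular-case} (nonempty interval because \eqref{eq:cond-psi-geq0-exists} is equivalent to $\sigma_1^2<1$, as you note) and restricting $C_T[N,N]\succeq\psi C[N,N]$ to the principal submatrix indexed by $S$. The only difference is cosmetic: for the other direction the paper invokes a Schur-complement criterion to conclude that the bordered matrix $W$ is positive definite and then takes a principal submatrix, whereas you simply write $C[S,S]=C_T[S,S]+C[S,T]\,C[T,T]^{-1}C[T,S]$ and observe that the second summand is positive semidefinite --- an equally valid and slightly more self-contained argument.
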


\begin{proof} 
Assume that $C_T[S,S] \succ 0$. Let $W := \left(\begin{smallmatrix}
        C[S,S] & C[S,T]\\
        C[T,S] & C[T,T]
    \end{smallmatrix}\right)$ and note that $C_T[S,S]$ is the Schur complement of
    $C[T,T]$ in $W$. 
    As $C[T,T] \succ 0$ and $C_T[S,S] \succ 0$, then by \citep*[Theorem 1.12]{SchurBook}, we have that $W \succ 0$. Then, as $C[S,S]$ is a principal submatrix of $W$, we conclude that $C[S,S] \succ 0$.
    
    Now, 
    from Lemma \ref{lem:exact-rel-singular-case}, we have that there exists $\psi > 0$ such that 
    $C_T[N,N] \succeq \psi C[N,N]$, which implies that $C_T[S,S] \succeq \psi C[S,S]$ and because $\psi > 0$ and $C[S,S] \succ 0$, we conclude that $C_T[S,S] \succ 0$.  
   \qed\end{proof}
   
Next, we demonstrate that the sufficient condition in Theorem \ref{thm:good-case-singular} is not necessary. We demonstrate
that it is possible to have $C[S,S] \succ 0$ and $C_T[S,S]\succ 0$ for any set $S$ with 
$|S|=s$, without condition \eqref{eq:cond-psi-geq0-exists} being satisfied.
\begin{example}
    For $C = \left(\begin{smallmatrix}
        1 & 0 & 1\\
        0 & 1 & 1\\
        1 & 1 & 2
    \end{smallmatrix}\right)$, $N :=\{1,2\}$, $T:=\{3\}$, condition   \eqref{eq:cond-psi-geq0-exists} does not hold, and for any set $S \subset N$ with $|S|=1$, we have  $C[S,S] \succ 0$ and $C_T[S,S]\succ 0.$
     \hfill $\clubsuit$
\end{example}

We note, however, that there is a particular situation where condition  \eqref{eq:cond-psi-geq0-exists} is necessary, as we observe in the following proposition. 

\begin{proposition}\label{rem:necessary-singular}
    If \eqref{eq:cond-psi-geq0-exists} is not satisfied and $\rank(C[N,N]) < n$, then there always exists a set $S \subset N$ with $|S|=\rank(C[N,N])$ such that $C[S,S] \succ 0$ and $C_T[S,S]\not\succ 0$.
\end{proposition}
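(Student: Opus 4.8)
The plan is to produce a single set $S$ explicitly and then reduce the whole statement on $N$ to the identical claim on $S$ via a congruence. First I would put $r:=\rank(C[N,N])$ and record that $r\geq 1$: if $r=0$ then $C[N,N]=0$, hence $C[N,T]=0$ (a positive-semidefinite matrix with a zero diagonal block has the corresponding off-diagonal block equal to zero), and then \eqref{eq:cond-psi-geq0-exists} reads $C[T,T]\succ 0$, which holds --- so under the hypothesis this case cannot occur. Since $C[N,N]\succeq 0$ has rank $r$, there is $S\subset N$ with $|S|=r$ and $C[S,S]\succ 0$ (a standard fact: a positive-semidefinite matrix has a nonsingular principal submatrix whose order equals its rank; cf.\ \cite{HJBook}). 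Fixing such an $S$ and writing $R:=N\setminus S\neq\emptyset$, I would claim this $S$ is the one the proposition asks for; note $C[S,S]\succ 0$ already holds by construction.

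The key step is to show that $C_T[N,N]=P\,C_T[S,S]\,P^{\top}$, where $P:=\left(\begin{smallmatrix} I_r\\ C[R,S]C[S,S]^{-1}\end{smallmatrix}\right)$ (rows ordered by $S$ then $R$). Because $C[S,S]$ is invertible, the rank-additivity formula for Schur complements gives $\rank(C[N,N])=\rank(C[S,S])+\rank\!\big(C[R,R]-C[R,S]C[S,S]^{-1}C[S,R]\big)$, and since the left side and $\rank(C[S,S])$ both equal $r$, the Schur complement $C[R,R]-C[R,S]C[S,S]^{-1}C[S,R]$ vanishes. Taking the Schur complement of the reordered matrix $C$ with respect to its block $C[S,S]$ --- a positive-semidefinite matrix --- its $(R,R)$ block is this zero matrix, so its $(R,T)$ block $C[R,T]-C[R,S]C[S,S]^{-1}C[S,T]$ also vanishes. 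Consequently $C[N,N]=P\,C[S,S]\,P^{\top}$ and $C[N,T]=P\,C[S,T]$, and substituting these into $C_T[N,N]=C[N,N]-C[N,T]C[T,T]^{-1}C[T,N]$ yields $C_T[N,N]=P\,C_T[S,S]\,P^{\top}$.

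With this in hand I would finish by contradiction: suppose $C_T[S,S]\succ 0$. Since $C[S,S]\succ 0$ as well, there is $\psi\in(0,1)$ with $C_T[S,S]\succeq\psi\,C[S,S]$ (take $\psi$ to be the least eigenvalue of $C[S,S]^{-1/2}C_T[S,S]C[S,S]^{-1/2}$, replaced by $\tfrac12$ if that value exceeds $1$, using $C[S,S]\succeq 0$). Congruence by $P$ gives $C_T[N,N]=P\,C_T[S,S]\,P^{\top}\succeq\psi\,P\,C[S,S]\,P^{\top}=\psi\,C[N,N]$; and the chain of equivalences inside the proof of Lemma \ref{lem:exact-rel-singular-case}, applied to this $\psi\in(0,1)$, rewrites $C_T[N,N]\succeq\psi\,C[N,N]$ as $(1-\psi)C[T,T]-C[N,T]^{\top}C[N,N]^{\dagger}C[N,T]\succeq 0$, whence $C[T,T]-C[N,T]^{\top}C[N,N]^{\dagger}C[N,T]\succeq\psi\,C[T,T]\succ 0$. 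This is exactly \eqref{eq:cond-psi-geq0-exists}, contradicting the hypothesis, so $C_T[S,S]\not\succ 0$. I expect the main obstacle to be the factorization $C_T[N,N]=P\,C_T[S,S]\,P^{\top}$ --- that is, verifying that both relevant Schur-complement blocks vanish under the correct reordering of $C$ --- together with the minor care needed to invoke the equivalences used in the proof of Lemma \ref{lem:exact-rel-singular-case} rather than its statement, whose $\psi$-interval is empty precisely when \eqref{eq:cond-psi-geq0-exists} fails.
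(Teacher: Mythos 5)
Your proof is correct, but it takes a genuinely different route from the paper's. The paper argues purely by rank bookkeeping: it applies the (generalized) Guttman rank-additivity formula to $C$ twice --- once with respect to the block $C[T,T]$ and once with respect to $C[N,N]$ --- to get $\rank(C_T[N,N])=\rank(C[N,N])+\rank\bigl(C[T,T]-C[N,T]^{\top}C[N,N]^{\dagger}C[N,T]\bigr)-t\leq\rank(C[N,N])-1$, and then any $S$ with $C[S,S]\succ 0$ and $|S|=\rank(C[N,N])$ forces $\rank(C_T[S,S])\leq\rank(C_T[N,N])<|S|$. You instead build the explicit factorization $C_T[N,N]=P\,C_T[S,S]\,P^{\top}$ (using rank additivity only once, to kill the Schur complement of $C[S,S]$ in $C[N,N]$ and hence the $(R,T)$ block of the larger Schur complement) and then derive a contradiction by transporting $C_T[S,S]\succeq\psi C[S,S]$ up to $C_T[N,N]\succeq\psi C[N,N]$ and running the forward direction of the equivalence chain from the proof of Lemma~\ref{lem:exact-rel-singular-case} to recover \eqref{eq:cond-psi-geq0-exists}. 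The paper's argument is shorter and avoids the contradiction; yours is longer but yields the structural identity $C_T[N,N]=P\,C_T[S,S]\,P^{\top}$ (hence $\rank(C_T[N,N])=\rank(C_T[S,S])$, a slightly sharper conclusion), handles the degenerate case $\rank(C[N,N])=0$ explicitly, and correctly notes that you must invoke the individual equivalences for a fixed $\psi\in(0,1)$ rather than the statement of Lemma~\ref{lem:exact-rel-singular-case}, whose admissible interval is empty here. One cosmetic nit: your fallback ``replace $\psi$ by $\tfrac12$ if that value exceeds $1$'' should also cover the case where the least eigenvalue equals $1$ exactly; since any smaller positive $\psi$ still satisfies $C_T[S,S]\succeq\psi C[S,S]$, this is trivially repaired.
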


\begin{proof} 
  As $C \succeq 0$ and condition \eqref{eq:cond-psi-geq0-exists} is not satisfied, we have  
  \[
  C[T,T] - C[N,T]^{\top}(C[N,N])^{\dagger}C[N,T]\succeq 0
  \]
  and singular. From the well-known result
        \[
        \rank(C) = \rank(C_T[N,N]) + \rank(C[T,T]),
        \]
        (see for example, \citep*[Eq. 0.9.2]{SchurBook}), and from  
         \citep*[Page 43]{SchurBook}, we have that 
        \begin{align*}
            \rank(C) =\rank(C[N,N]) + \rank(C[T,T] - C[N,T]^{\top}(C[N,N])^{\dagger}C[N,T]).
        \end{align*}
        As $\rank(C[T,T]) = t$  and $\rank(C[T,T] - C[N,T]^{\top}(C[N,N])^{\dagger}C[N,T] ) \leq t-1$, we have that $\rank(C_T[N,N]) \leq \rank(C[N,N]) -1$. 
        
        By our assumption, we have that $\rank(C[N,N]) < n$. Choose a set $S$, with $|S| = \rank(C[N,N])$, so that $C[S,S] \succ 0$.       
    Then we have $\rank(C_T[S,S]) \leq \rank(C_T[N,N]) \leq \rank(C[N,N]) - 1 < |S|$.
\qed\end{proof}

\begin{remark}
    For any {\rm\ref{MERSP}} instance satisfying condition \eqref{eq:cond-psi-geq0-exists}, there exists a $\psi > 0$ such that $B_T \succeq \psi B$.  Therefore, under this sufficient condition,  {\rm\NLPorigMERSP}  
 with an appropriate selection of the remaining parameters, provides a convex relaxation for {\rm\ref{MERSP}}, even when the covariance matrix $C$ is singular.
\end{remark}


\subsection{\texorpdfstring{Optimal choice of the parameter $\psi$}{Optimal choice of the parameter psi}}

In this subsection, we demonstrate that finding the parameter $\psi$ that minimizes the bound admits a closed-form solution. For this, we provide the following lemma, which generalizes \citep*[Lemma 22]{MESP2DOPT}. We note that the  lemma concerns only the first term of the objective function of \ref{hNLP} and is therefore independent of $C_2$\,. 

\begin{lemma}\label{lem:nlp_aug_psi_decrease}
    Let $C_1 \in \mathbb{S}^{n}_{+}$\,,   $ 0<s<n$, and $\hat{x} \in [0,1]^n$ with  $\mathbf{e}^\top \hat{x} = s$. Let $D:=\Diag(d)$ for some $d\in \mathbb{R}^n_{++}$\,,  and let $\gamma>0$ and  $p \geq \mathbf{e}$ be such that 
     $0 < \gamma d_i \leq \exp(p_i-\sqrt{p_i})$,  for all $i \in N$. For each $\psi \in (0,1/\lambda_{1}(D^{-1/2}C_1D^{-1/2})]$, define
    \[
L(\psi):=\Diag((\gamma d)^{\hat{x}}) +  
\gamma\Diag(\hat{x}^{p/2})(\psi C_1-D)\Diag({\hat{x}}^{p/2}),
\]
 and assume that $L(\psi)\succ 0$ for all such $\psi$.  Then, $f_1(\hat{x};\psi,\mathbf{e})$ is {nonincreasing} in $\psi$ on $(0,1/\lambda_{1}(D^{-1/2}C_1D^{-1/2})]$.
Moreover, if $\hat{x} \notin \{0,1\}^n$, then $f_1(\hat x;\psi,\mathbf{e})$ is {strictly decreasing} in $\psi$ on $(0,1/\lambda_{1}(D^{-1/2}C_1D^{-1/2}))$.
\end{lemma}

\begin{proof} 
    Let $G := \gamma\Diag(\hat{x}^{p/2}) C_1\Diag(\hat{x}^{p/2})$. 
For $\Phi=\mathbf{e}$, we have that  $
    \frac{\partial f_1}{\partial{\psi}}= \Tr(L(\psi)^{-1}G) - s/\psi$. 
    Then, it suffices to demonstrate that $\Tr(L(\psi)^{-1}\psi G) \leq s$, with strict inequality whenever  $\hat{x}\notin \{0,1\}^n$. 
    Note that 
    \[
    \psi G= L(\psi) -  \Diag((\gamma d)^{\hat{x}}) +  
\gamma\Diag(\hat{x}^{p/2})D\Diag({\hat{x}}^{p/2}).
    \]
    Define $\alpha_j := (L(\psi)^{-1})_{jj} -(\gamma d_j)^{-{\hat{x}}_j}$ and $\beta_j := (L(\psi)-\psi G)_{jj}=(\gamma d_j)^{\hat{x}_j} - \gamma d_j \hat{x}_j^{p_j}$ for $j \in N$. Then, 
\begin{align*}
    \Tr\left(L(\psi)^{-1}(L(\psi) - \psi G)\right) &=\textstyle\sum_{j\in N}(L(\psi)^{-1})_{jj}\beta_j
    = \textstyle\sum_{j\in N}((\gamma d_j)^{-{\hat{x}}_j} + \alpha_j)\beta_j\\ 
    &= \Tr\left(\Diag((\gamma d)^{\hat{x}})^{-1}(L(\psi) - \psi G)\right) + \textstyle\sum_{j \in N} \alpha_j\beta_j\,.
\end{align*}
Then,
    \begin{align*}
         \Tr&(L(\psi)^{-1} \psi G)  =  n- \Tr\left(L(\psi)^{-1}(L(\psi) - \psi G)\right)\\
         &=n-\Tr\left(\Diag((\gamma d)^{\hat{x}})^{-1}(L(\psi) - \psi G)\right) - \textstyle\sum_{j \in N} \alpha_j\beta_j\\
         &= n - 
         \Tr\big(\Diag((\gamma d)^{\hat{x}})^{-1}(\Diag((\gamma d)^{\hat{x}}) - \gamma \Diag(\hat{x}^{p/2}) D\Diag(\hat{x}^{p/2}))\big) - \textstyle\sum_{j\in N} \alpha_j \beta_j\\
         &= n - 
         \Tr\big(I_n - \Diag((\gamma d)^{\hat{x}})^{-1}\gamma \Diag(\hat{x}^{p/2}) D\Diag(\hat{x}^{p/2})\big) - \textstyle\sum_{j\in N} \alpha_j \beta_j\\
         &= \Tr\big(\Diag((\gamma d)^{\hat{x}})^{-1}\gamma \Diag(\hat{x}^{p/2}) D\Diag(\hat{x}^{p/2})\big)- \textstyle\sum_{j\in N} \alpha_j \beta_j\\
         &=\textstyle\sum_{i \in N} (\gamma d_i)^{1-\hat{x}_i}\hat{x}_i^{p_i}- \textstyle\sum_{j\in N} \alpha_j \beta_j\,.
    \end{align*}
    Next, we demonstrate that 
    \begin{itemize}
        \item $\textstyle\sum_{i \in N} (\gamma d_i)^{1-\hat{x}_i}\hat{x}_i^{p_i} \leq s$.
        
For $i \in N$, if $\hat{x}_i \in \{0,1\}$, then $(\gamma d_i)^{1-\hat{x}_i}\hat{x}_i^{p_i} = \hat{x}_i$\,. If $\hat{x}_i \in (0,1)$, we will demonstrate that 
    $(\gamma d_i)^{1-\hat{x}_i}\hat{x}_i^{p_i} \leq \hat{x}_i$ . Equivalently, we will demonstrate that $(1-\hat{x}_i)\log(\gamma d_i) + (p_i-1)\log(\hat{x}_i) \leq 0$. As $0 < \gamma d_i \leq \exp(p_i-\sqrt{p_i})$, then $\log(\gamma d_i) \leq p_i - \sqrt{p_i}$\,, so it suffices to demonstrate that for
    \[
    h(y) := (1-y)(p_i - \sqrt{p_i}) + (p_i-1)\log(y) 
    \]
    we have $h(y) < 0$ for $y \in (0,1)$. We note that $h(1) = 0$ and that
    \[
    h'(y) = - (p_i-\sqrt{p_i}) + (p_i-1)/y.
    \]
    As $(p_i-1)/y \geq p_i - 1 \geq p_i -\sqrt{p_i}$\,, $h$ is nondecreasing for $y \in (0,1)$ so we conclude that $h(y) \leq 0$ for $y \in (0,1)$. Then, 
    \[(1-\hat{x}_i)\log(\gamma d_i) + (p_i-1)\log(\hat{x}_i) \leq h(\hat{x}_i) \leq 0.\]
    Therefore, we have
    \[
    \textstyle\sum_{i \in N} (\gamma d_i)^{1-\hat{x}_i}\hat{x}_i^{p_i} \leq \textstyle\sum_{i \in N} \hat{x}_i = s.
    \]
    \item $\textstyle\sum_{j\in N} \alpha_j \beta_j \geq 0$  if $\hat{x} \in [0,1]^n$.
    
     Note that for $0 < \psi \leq 1/\lambda_1(D^{-1/2} C_1 D^{-1/2})$, we have $\psi C_1 \preceq D \Rightarrow L(\psi) \preceq \Diag((\gamma d)^{\hat{x}}) \Rightarrow L(\psi)^{-1} \succeq \Diag((\gamma d)^{\hat{x}})^{-1} \Rightarrow \alpha \geq 0$.

    For $j \in N$, we have
    $\beta_j := (\gamma d_j)^{\hat{x}_j} - \gamma d_j \hat{x}_j^{p_j} = (\gamma d_j)^{\hat{x}_j}(1-(\gamma d_j)^{1-\hat{x}_j}\hat{x}_j^{p_j}) \geq 0$, as we proved   that $(\gamma d_j)^{1-\hat{x}_j}\hat{x}_j^{p_j} \leq \hat{x}_j \leq 1$.\\

    \item $\alpha_{\hat\jmath} \beta_{\hat\jmath} > 0$  if $\hat{x}_{\hat\jmath} \in (0,1)$ where ${\hat\jmath} \in N$.

    Note that for $0 < \psi < 1/\lambda_1(D^{-1/2} C_1 D^{-1/2})$, we have  $\psi C_1 \prec D \Rightarrow \psi (C_1)_{\hat\jmath\hat\jmath} < d_{\hat\jmath}$\,. Then, 
    \[
    L(\psi)_{\hat\jmath\hat\jmath} =(\gamma d_{\hat\jmath})^{\hat{x}_{\hat\jmath}} +  
\gamma \hat{x}^{p_{\hat\jmath}}_{\hat\jmath}(\psi (C_1)_{\hat\jmath\hat\jmath} - d_{\hat\jmath}) \Rightarrow L(\psi)_{\hat\jmath\hat\jmath} < (\gamma d_{\hat\jmath})^{\hat{x}_{\hat\jmath}} \Rightarrow 1/L(\psi)_{\hat\jmath\hat\jmath} \,>\, (\gamma d_{\hat\jmath})^{-\hat{x}_{\hat\jmath}}\,.
    \]
    
    We note that $L(\psi)\circ L(\psi)^{-1}\succeq I_n$ (see, for example, \citep*[Theorem 7.7.9(c)]{HJBook}), which implies that $(L(\psi)^{-1})_{\hat\jmath\hat\jmath} \geq 1/L(\psi)_{\hat\jmath\hat\jmath} > (\gamma d_{\hat\jmath})^{-\hat{x}_{\hat\jmath}} \Rightarrow \alpha_{\hat\jmath} > 0$.

    We also note that
    $\beta_{\hat\jmath} := (\gamma d_{\hat\jmath})^{\hat{x}_{\hat\jmath}} - \gamma d_{\hat\jmath} \hat{x}_{\hat\jmath}^{p_{\hat\jmath}} = (\gamma d_{\hat\jmath})^{\hat{x}_{\hat\jmath}}(1-(\gamma d_{\hat\jmath})^{1-\hat{x}_{\hat\jmath}}\hat{x}_{\hat\jmath}^{p_{\hat\jmath}}) > 0$, as we proved   that $(\gamma d_{\hat\jmath})^{1-\hat{x}_{\hat\jmath}}\hat{x}_{\hat\jmath}^{p_{\hat\jmath}} \leq \hat{x}_{\hat\jmath} < 1$, and because $\gamma d > \mathbf{0}$. 
    \end{itemize}
    The result follows.
\qed\end{proof}

\begin{theorem}\label{thm:best_alpha_NLPId} 
     Assume that $C\succ 0$. Then the least objective value for  {\rm\NLPcompMERSP} is obtained with $\psi:=\psi^*$, where \begin{equation}\label{psistar_Comp}
\psi^*:=1/\lambda_{1}\left(C_T[N,N]^{1/2}C[N,N]^{-1}C_T[N,N]^{1/2}\right).
     \end{equation}
     Assume that $C\succeq 0$ and that condition \eqref{eq:cond-psi-geq0-exists} is satisfied. Then the least objective value for {\rm hNLP}$\left(\mbox{\rm MERSP}\right)$ is obtained with $\psi:=\psi^*$, where \begin{equation}\label{psistar_orig}
     \psi^*:=  1-\sigma_{1}^2\left(C[T,T]^{-1/2}C[T,N]C[N,N]^{\dagger/2}\right).
     \end{equation}
\end{theorem}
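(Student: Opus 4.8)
The plan is to fix any valid parameter setup --- $D=\Diag(d)$ with $d\in\mathbb{R}^n_{++}$, $\gamma>0$, and $p\ge\mathbf{e}$ satisfying the hypotheses of Corollary~\ref{cor:aug-nlpbound-mersp} \emph{other than} the $\psi$-dependent requirement $C_2\succeq\psi C_1$ --- and to show that $\znlpplusmesp(C_1,C_2,s)$, viewed as a function of $\psi$, is nonincreasing on the whole interval of admissible $\psi$. Since this quantity is an upper bound for \MERSP, its least value is then attained at the right endpoint of that interval, and the theorem reduces to identifying that endpoint as the stated $\psi^*$ in each of the two cases. Because $f_\psi=f^1_\psi-f^2$ and $f^2$ does not involve $\psi$, it suffices to prove that $\psi\mapsto f^1_\psi(\hat x)$ is nonincreasing on the admissible interval for every feasible $\hat x$ (i.e.\ $\hat x\in[0,1]^n$, $\mathbf{e}^\top\hat x=s$); then the pointwise maximum over the compact feasible set inherits the monotonicity. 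The monotonicity of $\psi\mapsto f^1_\psi(\hat x)$ is precisely Lemma~\ref{lem:nlp_aug_psi_decrease}, so the real work is to compute the admissible interval and to verify the lemma's hypotheses on it.

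For the admissible interval: with $D,\gamma,p$ fixed, the only $\psi$-dependent condition in Corollary~\ref{cor:aug-nlpbound-mersp} is $C_2\succeq\psi C_1$, so the admissible set is $(0,\psi^*]$ with $\psi^*=\max\{\psi>0:\ C_2\succeq\psi C_1\}$. In the complementary case ($C_1=B^{-1}$, $C_2=B_T^{-1}$, $C\succ0$), conjugating $B_T^{-1}-\psi B^{-1}\succeq0$ by $B_T^{1/2}$ gives $\psi\,B_T^{1/2}B^{-1}B_T^{1/2}\preceq I$, i.e.\ $\psi\le 1/\lambda_1(B_T^{1/2}B^{-1}B_T^{1/2})$, which is \eqref{psistar_Comp}. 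In the augmented (non-complementary) case ($C_1=B$, $C_2=B_T$, $C\succeq0$), the chain of equivalences inside the proof of Lemma~\ref{lem:exact-rel-singular-case} shows that the set of $\psi>0$ with $B_T\succeq\psi B$ is exactly $(0,\psi^*]$ with $\psi^*=1-\sigma_1^2(C[T,T]^{-1/2}C[T,N]C[N,N]^{\dagger/2})$, and that condition \eqref{eq:cond-psi-geq0-exists} is precisely the statement $\psi^*>0$ (so the admissible interval is nonempty); this is \eqref{psistar_orig}.

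For the hypotheses of Lemma~\ref{lem:nlp_aug_psi_decrease}: the conditions on $\gamma,d,p$ are part of the fixed setup, and the lemma yields monotonicity of $\psi\mapsto f^1_\psi(\hat x)$ on the interval where $\psi C_1\prec D$. In the complementary case $D\succeq C_2=B_T^{-1}\succeq B^{-1}=C_1$, so the hypothesis $D\succeq C_1$ holds outright and, since also $\psi^*C_1\preceq C_2\preceq D$, the interval $(0,\psi^*)$ lies inside the lemma's interval. In the non-complementary case $D\succeq C_1$ may fail, so I would instead apply Lemma~\ref{lem:nlp_aug_psi_decrease} to the rescaled matrix $\bar C_1:=\psi^*C_1$, which does satisfy $D\succeq C_2\succeq\bar C_1$: writing $\bar\psi:=\psi/\psi^*$, a one-line check from the definition gives $f^1_\psi(\hat x)=\bar f^1_{\bar\psi}(\hat x)-s\log\psi^*$, where $\bar f^1$ is the $f^1$-function built from $\bar C_1$, and $\psi\in(0,\psi^*]$ corresponds to $\bar\psi\in(0,1]$; since $\bar C_1\preceq D$, the interval $(0,1)$ again lies inside the lemma's interval, so $\bar f^1_{\bar\psi}(\hat x)$ --- hence $f^1_\psi(\hat x)$ --- is nonincreasing in $\psi$ on $(0,\psi^*)$. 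In both cases the right endpoint $\psi^*$ is covered by continuity of $\psi\mapsto f^1_\psi(\hat x)$ up to $\psi^*$ (the matrix $L(\psi^*)$ appearing in the proof of Lemma~\ref{lem:nlp_aug_psi_decrease} remaining positive definite there). Taking the maximum over feasible $\hat x$ then shows $\znlpplusmesp$ is nonincreasing in $\psi$ on $(0,\psi^*]$, so its best value is attained at $\psi=\psi^*$.

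I expect the main obstacle to be the bookkeeping at the endpoint $\psi^*$ together with the need, in the non-complementary case, to pass Lemma~\ref{lem:nlp_aug_psi_decrease} through the rescaled matrix $\psi^*C_1$ (because $D\succeq C_1$ can genuinely fail there); the monotonicity itself is handed to us by Lemma~\ref{lem:nlp_aug_psi_decrease}, and the explicit value of $\psi^*$ is just the short eigenvalue/Schur-complement computation above, read off from Lemma~\ref{lem:exact-rel-singular-case}.
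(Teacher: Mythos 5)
Your proposal is correct and follows essentially the same route as the paper: Lemma~\ref{lem:nlp_aug_psi_decrease} gives monotonicity of the bound in $\psi$, so the best bound is attained at the largest admissible $\psi$, which you then compute via the eigenvalue identity $\psi^*=1/\lambda_1(C_2^{-1/2}C_1C_2^{-1/2})$ in the complementary case and read off from Lemma~\ref{lem:exact-rel-singular-case} in the (possibly singular) non-complementary case. You are in fact more careful than the paper's very terse proof, in that you explicitly handle the failure of the hypothesis $D\succeq C_1$ in the non-complementary case via the rescaling $\bar C_1:=\psi^*C_1$ and address continuity at the endpoint $\psi^*$, neither of which the paper spells out.
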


\begin{proof} 
From Lemma \ref{lem:nlp_aug_psi_decrease}, we see that to obtain the
least objective value
for \ref{hNLP}, it suffices to choose the largest $\psi$ such that $D \succeq C_2 \succeq \psi C_1$\,.  If $C_2 \succ 0$,  the largest admissible value of $\psi$ is given by  $\psi^*:= 1/\lambda_{1}(C_2^{-1/2}C_1C_2^{-1/2})$.
Considering the choices $C_1:=B^{-1}:= C[N,N]^{-1}$, and $C_2:=B_T^{-1}:=C_T[N,N]^{-1}$ associated with  \NLPcompMERSP, we obtain \eqref{psistar_Comp}.  
Considering instead
{\rm hNLP}$\left(\mbox{\rm MERSP}\right)$, associated with the choices $C_1:=B:=C[N,N]$ and $C_2:=B_T:=C_T[N,N]$, in case $C$ is singular, $C_2$ may be singular. Then the greatest admissible value of $\psi$ in \eqref{psistar_orig}  is given by Lemma \ref{lem:exact-rel-singular-case}. 
\qed\end{proof}

When $C \succ 0$, both $C[N,N]$ and $C_T[N,N]$ are positive definite. In this case, \eqref{psistar_orig} reduces to an analogous expression to \eqref{psistar_Comp}. This observation is formalized in the following corollary.

\begin{corollary}\label{cor:same-scaling-pos-def-case}
    For  $C \succ 0$,  we have that 
    \[
     1-\sigma_{1}^2\left(C[T,T]^{-1/2}C[T,N]C[N,N]^{\dagger/2}\right) = 1/\lambda_{1}\left(C_T[N,N]^{-1/2}C[N,N]C_T[N,N]^{-1/2}\right).
    \] 
\end{corollary}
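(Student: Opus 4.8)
The plan is to show that both sides of the claimed identity equal $\lambda_n\big(B^{-1/2}B_T B^{-1/2}\big)$, where I abbreviate $B := C[N,N]$ and $B_T := C_T[N,N] = B - C[N,T]C[T,T]^{-1}C[T,N]$. First I would record that $C \succ 0$ forces $B \succ 0$, and also $B_T \succ 0$, since $B_T$ is the Schur complement of $C[T,T]$ in $C$ (for instance by \cite[Thm.~1.12]{SchurBook}); in particular $B^{\dagger/2} = B^{-1/2}$, and every square root and inverse written below exists.

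For the left-hand side, set $M := C[T,T]^{-1/2}C[T,N]B^{-1/2}$, so that the quantity in question is $1 - \sigma_1^2(M)$. Since $C[T,T]^{-1/2}$ and $B^{-1/2}$ are symmetric, $M^\top M = B^{-1/2}C[N,T]C[T,T]^{-1}C[T,N]B^{-1/2} = I_n - B^{-1/2}B_T B^{-1/2}$, the last step being exactly the definition of $B_T$. Using $\sigma_1^2(M) = \lambda_1(M^\top M)$ together with the fact that the eigenvalues of $I_n - B^{-1/2}B_T B^{-1/2}$ are $1 - \lambda_i\big(B^{-1/2}B_T B^{-1/2}\big)$, one gets
\[
1 - \sigma_1^2(M) \;=\; 1 - \lambda_1\big(I_n - B^{-1/2}B_T B^{-1/2}\big) \;=\; \lambda_n\big(B^{-1/2}B_T B^{-1/2}\big).
\]

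For the right-hand side, I would observe that $C_T[N,N]^{-1/2}C[N,N]C_T[N,N]^{-1/2} = B_T^{-1/2}BB_T^{-1/2}$ is similar to $B_T^{-1}B$, which in turn is similar to $B^{1/2}B_T^{-1}B^{1/2} = \big(B^{-1/2}B_T B^{-1/2}\big)^{-1}$; hence all three matrices share the same (positive) eigenvalues. Consequently
\[
\lambda_1\big(B_T^{-1/2}BB_T^{-1/2}\big) \;=\; \lambda_1\Big(\big(B^{-1/2}B_T B^{-1/2}\big)^{-1}\Big) \;=\; 1/\lambda_n\big(B^{-1/2}B_T B^{-1/2}\big),
\]
so that $1/\lambda_1\big(C_T[N,N]^{-1/2}C[N,N]C_T[N,N]^{-1/2}\big) = \lambda_n\big(B^{-1/2}B_T B^{-1/2}\big)$, which matches the left-hand side and finishes the proof.

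There is no real obstacle here: the argument is a routine chain of standard facts — similar matrices have equal eigenvalues, $\sigma_1^2(M) = \lambda_1(M^\top M)$, the eigenvalues of an inverse are the reciprocals of the originals, and the Schur-complement identity defining $B_T$. The only points that require a moment of care are (a) justifying $B_T \succ 0$ up front so that $B_T^{-1/2}$ is meaningful, and (b) keeping track of which extreme eigenvalue (largest versus smallest) appears after each inversion. Alternatively, one could avoid the explicit eigenvalue bookkeeping by noting that, for $C \succ 0$, each side is precisely the largest $\psi$ with $B_T \succeq \psi B$ — the left side via Lemma~\ref{lem:exact-rel-singular-case} (since \eqref{eq:cond-psi-geq0-exists} holds automatically when $C\succ 0$) and the right side via the $C_2 \succ 0$ case in the proof of Theorem~\ref{thm:best_alpha_NLPId} — but the computation above is self-contained and short.
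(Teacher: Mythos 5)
Your proof is correct and follows essentially the same route as the paper's: your matrix $M$ is exactly the paper's $E:=C[T,T]^{-1/2}C[T,N]C[N,N]^{-1/2}$, and both arguments reduce the two sides to the common quantity $\lambda_n\bigl(B^{-1/2}B_T B^{-1/2}\bigr)=\lambda_n(I-E^\top E)$ via the identity $B_T=B^{1/2}(I-E^\top E)B^{1/2}$, similarity, and the eigenvalue-of-inverse rule. The only difference is presentational (meeting in the middle versus transforming one side into the other), so there is nothing substantive to add.
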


\begin{proof} 
    Let $C_1 := C[N,N]$, $C_2:= C_T[N,N]$ and  
    \[
    E:=C[T,T]^{-1/2}C[T,N]C[N,N]^{-1/2},
    \]
    and note that 
    \[
    C_2 = C_1 - C[T,N]^\top C[T,T]^{-1}C[T,N]=    C_1^{1/2}( I - E^\top E) C_1^{1/2}\,.
    \]
  As both $C_1$ and $C_2$ are positive definite matrices with rank $n$, we have $\rank(I_n - E^\top E) = n$, and so we have that $C_2^{-1} = C_1^{-1/2}(I_n - E^\top E)^{-1}C_1^{-1/2}$\,. Then, 
    \begin{align*}
        1/\lambda_{1}(&C_2^{-1/2}C_1C_2^{-1/2}) = 1/\lambda_{1}(C_1^{1/2}C_2^{-1}C_1^{1/2}) = 1/\lambda_{1}((I_n - E^\top E)^{-1}) = \lambda_n(I_n - E^\top E)\\
        &= 1 - \lambda_1(E^\top E) = 1 - \sigma_1^2(E) = 1-\sigma_{1}^2(C[T,T]^{-1/2}C[T,N]C[N,N]^{-1/2}).
    \end{align*}
    The result follows.
\qed\end{proof}

\begin{corollary}\label{cor:CNT_full_row_rank}
    Let $C \succ 0$, and let $\psi:=\psi^*$ be defined as in \eqref{psistar_Comp}. Then,  
    \begin{itemize}
        \item we do not get a bound improvement 
         from
{\rm \NLPcompMERSP} (compared to the 
 complementary  NLP bound from \citep*{AFLW_Remote}),
using $\psi\not=1$, 
        if  $\rank(C[N,T])<n$; 
        \item we get a bound improvement 
         from
 {\rm \NLPcompMERSP} (compared to the 
 complementary  NLP bound from \citep*{AFLW_Remote}),
using $\psi\not=1$, if  $\rank(C[N,T])=n$ and there exists a non-binary optimal solution to {\rm\NLPcompMERSP}.
    \end{itemize}
\end{corollary}

\begin{proof} 
    Note that  
    \[
    \rank(C[N,T](C[T,T])^{-1}C[T,N]) = \rank(C[N,T](C[T,T])^{-1/2}) = \rank(C[N,T]),
    \]
    and 
    \[
    C[N,N] - C_T[N,N] = C[N,T](C[T,T])^{-1}C[T,N]\succeq 0.
    \]
    Then, we have that
    \begin{align*}
    &\rank(C[N,T]) = n \Leftrightarrow C[N,N] \succ  C_T[N,N] \Leftrightarrow C_T[N,N]^{-1}\succ C[N,N]^{-1} \Leftrightarrow \\&I_n \succ C_T[N,N]^{1/2}C[N,N]^{-1}C_T[N,N]^{1/2} \Leftrightarrow \\
    &\lambda_{1}(C_T[N,N]^{1/2}C[N,N]^{-1}C_T[N,N]^{1/2}) < 1 \Leftrightarrow \psi^*>1.
    \end{align*}
    Then, if $\rank(C[N,T])<n$, we have that $\psi^*=1$, and the result for this case follows.
    Now, consider the case  of $\rank(C[N,T])=n$. 
    
     Let $\tilde{x}$ be an optimal solution for \NLPcompMERSP\, when $\psi=1$. Let $\hat{x}$ be a non-binary optimal solution for \NLPcompMERSP\, when $\psi:=\psi^*$. Then,  $f(\hat{x};{\psi^*},\mathbf{e}) <  f(\hat{x};1,\mathbf{e}) \leq f(\tilde{x};1,\mathbf{e})$,
     where the first inequality follows from Lemma \ref{lem:nlp_aug_psi_decrease} and the second from the optimality of $\tilde{x}$.
     This completes the proof.
\qed\end{proof}


\section{\texorpdfstring{Computing the g-scaling parameter $\Phi$}{Computing the g-scaling parameter Phi}}\label{sec:diag-scale}

In this section, we consider $\psi>0$ fixed,  
and we focus on a good selection of the parameter $\Phi$ for the \ref{hNLP} bound for \ref{MEDP}. The inspiration for the parameterization of the bound by a scaling vector is taken from the g-scaling concept of \citep*{gscale}.
Because of this, we refer to $\Phi$ as the \emph{g-scaling} parameter, and choosing $\Phi$ different from $\mathbf{e}$ as \emph{g-scaling}.
Applying g-scaling preserves the semidefiniteness condition required to ensure  convexity of the relaxation; that is, if  $C_2\succeq \psi C_1 $ then  $\Diag(\Phi) C_2 \Diag(\Phi)\succeq \psi \Diag(\Phi) {C}_1 \Diag(\Phi)$.
 The challenge is to find a good g-scaling for reducing the \ref{hNLP} bound.

    In what follows, we consider $E_1 := \psi \Diag(\Phi) {C}_1 \Diag(\Phi)$ and $E_2 := \Diag(\Phi) C_2 \Diag(\Phi)$, and we apply the theory presented in \S\S \ref{sec:compare-mersp-bounds}-\ref{sec:aug-nlp-bound} with $\psi {C}_1$ and $C_2$\,, respectively replaced by $E_1$ and $E_2$\,.


\subsection{A selection strategy for the g-scaling parameter }\label{subsec:init-psi}

 We propose a strategy for selecting the g-scaling parameter $\Phi$, using the following  optimization problem, which aims to reduce the maximum eigenvalue of $C_2$\,.
\begin{equation}\label{eq:min_maxeig_C2_subprob}
\min\left\{\tau \,:\,\tau I_n - \Diag(\Phi)C_2\Diag(\Phi)\succeq 0,\,\left({\textstyle\prod_{i=1}^n \Phi_{i}}
\right)^{1/n}\geq 1,\, \Phi \geq \mathbf{0}\right\}.
    \end{equation}
In preliminary experiments, we found that this strategy worked quite well, compared to others that we tried. A possible explanation for this
good behavior is as follows. 
The idea is to choose $\Phi$ so that $\Diag(\Phi)C_2\Diag(\Phi)\in\mathbb{S}^n_{+}$ has a compressed range of nonzero eigenvalues. We note that the geometric-mean constraint is a normalization, based on the simple observation that 
the \ref{hNLP} bound is invariant under positive scaling of $\Phi$; it serves to
guarantee that $\Phi\in\mathbb{R}^n_{++}$\,. Moreover,   $\det(\Diag(\Phi)C_2\Diag(\Phi))\geq \det(C_2)$,  with equality if $\Phi$ is an optimal solution to \eqref{eq:min_maxeig_C2_subprob}, as we can see from the proof of Theorem \ref{thm:nlp_id_eq_nlp_tr}. So, for the case of $C_2\succ 0$, we minimize the largest eigenvalue of $\Diag(\Phi)C_2\Diag(\Phi)$, while keeping its determinant bounded from below by a positive number.  
By eigenvalue-interlacing inequalities, compressing the range of eigenvalues of $\Diag(\Phi)C_2\Diag(\Phi)$ serves to limit the
range of $ \ldet (\Diag(\Phi)C_2\Diag(\Phi))[S(x),S(x)]$, as $x$
varies over the feasible region of \ref{MEDP}.
In this way, the associated  
g-scaled \ref{MEDP} instance becomes closer to a \ref{MESP} instance. Our experience is that \ref{MESP} instances tend to have smaller gaps than comparable \ref{MERSP} instances.

\begin{proposition}
Let $C_2 \succeq 0$. Then,  \eqref{eq:min_maxeig_C2_subprob} is a convex optimization problem satisfying Slater's condition.
\end{proposition}
\begin{proof}
The 
 geometric mean is of course concave.
Because $C_2\succeq 0$, we have that the mapping $(\tau,\Phi)\rightarrow \tau  I_n - \Diag(\Phi)C_2\Diag(\Phi)$ is matrix-concave in $(\tau,\Phi)$. 
It follows that \eqref{eq:min_maxeig_C2_subprob} is a convex optimization problem.
To verify Slater's condition, note that $(\hat\tau,\hat\Phi):=(5\lambda_1(C_2),2\mathbf{e})$ is strictly feasible for \eqref{eq:min_maxeig_C2_subprob}.\qed
\end{proof}

In the next theorem, we highlight an important property of this strategy to select the g-scaling parameter $\Phi$. Specifically, we demonstrate that when $\Phi$ is obtained as the solution of \eqref{eq:min_maxeig_C2_subprob}, the 
$\TraceNLP$ strategy for selecting parameters $\gamma$ and $d$ for the \ref{hNLP} bound leads to the same choice as the $\IdentityNLP$ strategy. Furthermore, for the $\IdentityNLP$ strategy, the procedure described in the next section can be applied to refine the g-scaling parameter $\Phi$, potentially improving the resulting bound. 

\begin{theorem}\label{thm:nlp_id_eq_nlp_tr}
    Let $\hat\Phi$ be an optimal solution of \eqref{eq:min_maxeig_C2_subprob}, and define $E_2:=\Diag(\hat\Phi)C_2\Diag(\hat\Phi)$. Then $D := \lambda_{1}(E_2)I_n$  is the unique optimal solution of the trace-minimization problem in \eqref{eq:nlp-trace}, with $C_2$ replaced by  $E_2$\,. Consequently,  {\rm hNLP-Tr} is precisely {\rm hNLP-Id}. 
\end{theorem}

\begin{proof} 
    The optimality conditions of \eqref{eq:min_maxeig_C2_subprob} are:
    \begin{equation}\label{eq:opt_conditions_min_max_eigval_C2}
        \begin{array}{lll}
    &\tau I_n - \Diag(\Phi)C_2\Diag(\Phi)\succeq 0,\quad &\left({\prod_{i=1}^n \Phi_{i}}
\right)^{1/n}\geq 1, \\[3pt]
    &\Tr(Z(\tau I_n - \Diag(\Phi)C_2\Diag(\Phi))) =0,\quad &\Tr(Z) = 1,
\\[3pt]
&2(Z\Diag(\Phi)C_2)_{ii} = \frac{\nu}{n\Phi_i} \left({\prod_{j=1}^n \Phi_{j}}
\right)^{1/n} \!+ \upsilon_i\,,\;\, & \upsilon_i \Phi_i = 0,~ i\in N,\\[3pt]
&\Phi \geq \mathbf{0},\;\upsilon \geq \mathbf{0}, \;\nu \geq 0,\; Z \succeq 0, \quad &\nu\left(1-\left({\prod_{i=1}^n \Phi_{i}}
\right)^{1/n}\right) = 0.
\end{array}
    \end{equation}
  Let  $(\hat \tau, \hat\Phi, \hat \upsilon, \hat \nu, \hat Z)$ satisfy \eqref{eq:opt_conditions_min_max_eigval_C2}. Let $E_2:=\Diag(\hat{\Phi} )C_2\Diag(\hat{\Phi} )$ and $W:=\hat{\tau}  I_n-E_2$\,. 
   
   As $\left({\prod_{i=1}^n \hat{\Phi} _{i}}
\right)^{1/n}\geq 1$, we have  that $\hat{\Phi} \in \mathbb{R}^n_{++}$\,. Therefore, $\hat{\upsilon}  = \mathbf{0}$ by complementary slackness. 
Because $W\succeq 0$\,, it follows that  $\hat{\tau}  \geq \lambda_1(E_2) > 0$.
Because $\hat{Z} \succeq 0$, $W\succeq 0$ and $\Tr(\hat{Z} W) =0$, it follows that  $\hat{Z} W = W\hat{Z}  = 0$. Hence,
    \begin{equation}\label{eq:lambda_max_Z_min_maxlmax}
          E_2\hat{Z}  = \hat{\tau}  \hat{Z} ,\text{ and } \hat{Z} \Diag(\hat{\Phi} )C_2 = \hat{\tau}  \hat{Z} \Diag(\hat{\Phi} )^{-1},
    \end{equation}
    and, therefore, we see that $\hat{\tau} $ is an eigenvalue of $E_2$ (noting that $\Tr(\hat{Z} )=1$, and thus,  $\hat{Z} \neq0$).  Consequently, $\hat{\tau} \leq\lambda_1(E_2)$. Together with $\hat{\tau} \geq\lambda_1(E_2)$, we conclude that $\hat{\tau} =\lambda_1(E_2)$, hence $E_2\hat{Z} =\lambda_1(E_2)\hat{Z} $.
    
    From 
    $\hat{\upsilon} =\mathbf{0}$ and the second identity in \eqref{eq:lambda_max_Z_min_maxlmax},  we have that
    \begin{align*}
        &\left(\hat{Z} \Diag(\hat{\Phi} )C_2\right)_{ii} = \frac{\hat{\nu} }{2n\hat{\Phi} _i} \left({\textstyle\prod_{j=1}^n \hat{\Phi} _{j}}
\right)^{1/n}\Leftrightarrow\\&\hat{\tau}  \frac{\hat{Z} _{ii}}{\hat{\Phi} _i} = \frac{\hat{\nu} }{2n\hat{\Phi} _i} \left({\textstyle\prod_{j=1}^n \hat{\Phi} _{j}}
\right)^{1/n} \Leftrightarrow \hat{Z} _{ii} = \frac{\hat{\nu} }{2n\hat{\tau}  } \left({\textstyle\prod_{j=1}^n \hat{\Phi} _{j}}
\right)^{1/n}\,,
    \end{align*}
for all $i\in N$. Then, because $\Tr(\hat{Z} ) = 1$, we have $\hat{\nu}  = 2\hat{\tau}  /\big({\textstyle\prod_{j=1}^n \hat{\Phi} _{j}}
\big)^{1/n}$. As $\hat{\tau}  > 0$, we have that $\hat{\nu}  > 0$. Then, from complementary slackness, we have that $\textstyle\prod_{j=1}^n \hat{\Phi} _{j} = 1$. We conclude that $\diag(\hat{Z} ) = (1/n)\mathbf{e}$.
    
Throughout  the remainder of this proof,  whenever we refer to \eqref{eq:nlp-trace}, we consider that  $C_2$ is replaced by  $E_2$\,.  Under this convention, the dual of \eqref{eq:nlp-trace} is
    \begin{equation}\label{eq:nlp-trace-dual-aug-scaling}
    \max\,\{\Tr(E_2 \Omega )\,:\, \diag(\Omega)=\mathbf{e},\, \Omega\succeq 0\}\,.
\end{equation}
As $\diag(\hat{Z} )=(1/n)\mathbf{e}$ and $\hat{Z} \succeq0$, the matrix $\hat\Omega := n\hat{Z} $ satisfies $\diag(\hat\Omega)=\mathbf{e}$ and $\hat\Omega\succeq0$, so $\hat\Omega\succeq0$ is feasible for \eqref{eq:nlp-trace-dual-aug-scaling}, with objective value  
\[
\Tr(E_2 \hat\Omega) = n\Tr(E_2\hat{Z} ) = n\hat{\tau}  \Tr(\hat{Z} ) = n\lambda_{1}(E_2),
\]
where the second equation comes from \eqref{eq:lambda_max_Z_min_maxlmax}. The matrix $\hat Y := \lambda_{1}(E_2)I_n$ is diagonal and satisfies $\hat Y\succeq E_2$\,, so it is feasible for \eqref{eq:nlp-trace}, with objective value $\Tr(\hat Y) = n\lambda_{1}(E_2) = \Tr(E_2\hat\Omega)$. By weak duality, $\hat Y$ is optimal. 

Next, we will demonstrate that $\hat{Y}$ is the unique optimal solution for \eqref{eq:nlp-trace}. 
    Consider any optimal solution $Y$ to \eqref{eq:nlp-trace}, then $\Tr(Y) = n\lambda_{1}(E_2)$. As $E_2\hat{Z}  = \lambda_{1}(E_2)\hat{Z} $, we have $E_2 \hat\Omega = \lambda_{1}(E_2)\hat\Omega$, so
    \[
    \Tr((Y-E_2)\hat\Omega) = \Tr(Y\hat\Omega) -  \Tr(E_2\hat\Omega) = \textstyle\sum_{i = 1}^n Y_{ii} \hat\Omega_{ii} - n\lambda_{1}(E_2) =   \textstyle\sum_{i = 1}^n Y_{ii} - n\lambda_{1}(E_2) = 0.
    \]
Because $Y-E_2 \succeq 0$, $\hat\Omega \succeq 0$ and $\Tr((Y-E_2)\hat\Omega) = 0$, we have $(Y-E_2)\hat\Omega = 0$, hence $(Y-\lambda_{1}(E_2)I_n)\hat\Omega = 0$. For each $i \in N$ this gives $(Y_{ii}-\lambda_{1}(E_2))\hat\Omega_{ii}= 0$, and as $\diag(\hat\Omega) = \mathbf{e}$, $Y_{ii} = \lambda_{1}(E_2)$. Therefore, $\hat Y$ is the unique optimal solution to  \eqref{eq:nlp-trace}.
\qed\end{proof}


\subsection{Optimizing the g-scaling parameter}\label{subsec:optimize-Psi}

Next, we 
describe an optimization procedure for computing a g-scaling parameter $\Phi$ that locally minimizes   the \ref{hNLP} bound  corresponding to the choice of parameters prescribed by the 
$\IdentityNLP$  strategy, that is, the {\rm hNLP-Id} bound, with $\psi C_1 $ replaced by  $E_1 := \psi \Diag(\Phi){C}_1\Diag(\Phi)$ and ${C}_2$ replaced by  $E_2 := \Diag(\Phi){C}_2\Diag(\Phi)$. This procedure was motivated by a similar  approach used in  \citep*[Section 5]{gscale} for the g-scaling of various bounds for \ref{MESP}.

The algorithm is initialized with a g-scaling vector $\Phi^0$ obtained as the solution of \eqref{eq:min_maxeig_C2_subprob}.

The optimization of $\Phi$ in {\rm hNLP-Id} relies on  two key properties. First,  $p$ and $\gamma d$ are fixed and do not depend on $\Phi$. Second, for any fixed $\Phi$, the corresponding ordinary scaling parameter $\gamma$ is uniquely determined. As a result, the objective of {\rm hNLP-Id} can be regarded as a function of $\Phi$ alone. 
This feature enables the use of the BFGS method to locally minimize {\rm hNLP-Id} with respect to $\log\Phi$, adapting the methodology from \citep*[Section 5]{gscale}. Specifically, for a given $\Phi$, let $\hat{x} := \hat{x}(\Phi)$ denote an optimal solution of {\rm hNLP-Id} corresponding to this choice of $\Phi$. For $k=1,2$, 
we define
    \[ \textstyle
    h^k(\Phi) :=  \ldet\left( I_n + \Diag(\hat{x})^{\scriptscriptstyle 1/2}\left( \,\frac{1}{\lambda_{1}(E_2(\Phi))}\,E_k(\Phi) -I_n\right)\Diag(\hat{x})^{\scriptscriptstyle 1/2}\right).
    \]
    Then, the {\rm hNLP-Id} objective function can be written as 
    $
    \phi(\Phi) := h^1(\Phi)-h^2(\Phi).
    $
    
    The optimization procedure proceeds iteratively. At each iteration, for the current value of $\Phi$, the vector $\hat{x}:=\hat{x}(\Phi)$ is first obtained by solving {\rm hNLP-Id} for the given $\Phi$. The function $\phi(\Phi)$ is then locally minimized with respect to $\log\Phi$, treating $\hat{x}$ as fixed when computing a subgradient and performing the BFGS update. Although $\hat{x}$ is recomputed at every iteration as a function of $\Phi$, it is treated as constant during the update of $\Phi$. This separation is essential to ensure that, at every iteration of the algorithm, the objective value of $\phi(\Phi)$ remains an upper bound for \ref{MERSP}, and it follows the methodology of \citep*[Section 5]{gscale}.


\section{Branch-and-bound}\label{sec:BB}

A main use of upper bounds for \ref{MERSP}
is within a B\&B algorithm. In this section, we describe two techniques that can enhance the
performance of B\&B. These two concepts 
inform
our implementation. 
One technique, described in Section \ref{sec:varfix}, is 
variable fixing, which is often 
applied for mixed-integer linear and mixed-integer convex optimization, using duality information 
to fix integer variables at one of their bounds.
This can be applied on every B\&B subproblem. 
The other technique, described in Section \ref{sec:subprob}, which at first blush 
appears to be trivial, involves exactly how to construct child subproblems for \ref{MERSP}, when branching in B\&B. 


\subsection{Variable fixing based on duality}\label{sec:varfix}

In what follows, we discuss the application of a classical variable-fixing methodology to \ref{MERSP} based on  knowledge of a lower bound for its objective value and a feasible solution to the Lagrangian dual of \ref{hNLP} (see, e.g., \citep*{yamagishi2026dualpathfixingstrategyapplication}) .

The Lagrangian dual of {\rm\ref{hNLP}} can be formulated as 
\begin{equation}\label{eq:dual_nlp}\tag{DhNLP}
\begin{array}{rrl}
&\min &
f(x;\psi,\Phi) + \upsilon^\top x + \nu^\top(\mathbf{e}-x) + \tau(s-\mathbf{e}^\top x)\\
&\text{\rm s.t.} 
&\nabla_x f(x;\psi,\Phi) + \upsilon - \nu - \tau\mathbf{e}  = \mathbf{0};\\[2.5pt]
&&\upsilon \geq \mathbf{0}, \nu \geq \mathbf{0}.
\end{array}
\end{equation}
Fixing $x$ in \ref{eq:dual_nlp} at given feasible solution $\hat{x}$ of \ref{hNLP}, we show how to construct a closed-form feasible solution of \ref{eq:dual_nlp},  with the goal of obtaining a small duality gap. The minimum gap between $\hat x$ in \ref{hNLP} and feasible solutions  of  \ref{eq:dual_nlp} of the form $(\hat x,\upsilon,\nu,\tau)$,   is given by the optimal value of the linear-optimization problem
\begin{equation}\label{eq:g_theta}
\begin{array}{rrl}
&\min & \nu^\top \mathbf{e} + \tau s - \nabla_x f(\hat{x};\psi,\Phi)^\top\hat{x}\\
&\text{s.t.} 
& \nabla_x f(\hat{x};\psi,\Phi) + \upsilon - \nu - \tau\mathbf{e} = \mathbf{0},\\
&&\nu \geq  \mathbf{0}, \upsilon \geq  \mathbf{0}.
\end{array}
\end{equation}
The  optimality conditions for  \eqref{eq:g_theta} are 
\begin{equation}\label{kktnatural}
\begin{array}{l}
    \nabla_x f(\hat{x};\psi,\Phi) + \upsilon - \nu - \tau\mathbf{e}  = \mathbf{0},~\nu\geq  \mathbf{0},~\upsilon\geq \mathbf{0},\\
    \mathbf{e}^\top x = s,x\in[0,1]^n,\\
     \nu^\top \mathbf{e} + \tau s = \nabla_x f(\hat{x};\psi,\Phi)^\top x.
\end{array}
\end{equation}
Let $\sigma$ be a permutation of $N$, such that $\nabla_x f(\hat{x};\psi,\Phi)_{\sigma(1)} \geq \dots \geq \nabla_x f(\hat{x};\psi,\Phi)_{\sigma(n)}$\,. 
We can verify that the following solution satisfies \eqref{kktnatural}.

\begin{align}
    &\tilde{x}_{\sigma(\ell)}:= \begin{cases}
        1,\quad&\text{for }\ell =1,\dots,s;\\
        0,\quad&\text{otherwise},
    \end{cases}\nonumber\\ 
    &\tilde{\tau} :=  \nabla_x f(\hat{x};\psi,\Phi)_{\sigma(s)}\,,\label{eq:tau}\\
    &\tilde{\nu}_{\sigma(\ell)} := \begin{cases}
         \nabla_x f(\hat{x};\psi,\Phi)_{\sigma(\ell)} - \tilde{\tau},\quad &\text{for }\ell = 1,\dots,s;\\
         0,&\text{otherwise},
    \end{cases}\label{eq:nu}\\
    &\tilde{\upsilon}_{\sigma(\ell)} := \begin{cases}
         \tilde{\tau}-\nabla_x f(\hat{x};\psi,\Phi)_{\sigma(\ell)}\,,\quad &\text{for }\ell = s+1,\dots,n;\\
         0,&\text{otherwise}.\label{eq:upsilon}
    \end{cases}
\end{align}

Finally, in the following theorem, we apply a classical  variable-fixing procedure to \ref{MERSP} using the dual solution constructed above. We note that the proposed construction of dual-feasible solutions is particularly useful when \ref{hNLP} is not solved to optimality, and only feasibility of the given point $\hat{x}$ can be guaranteed.  

\begin{theorem}\label{thm:variable_fixing} 
 Let 
\begin{itemize}
    \item\!${\rm{LB}}$ be the objective-function value of a feasible solution for {\rm\ref{MERSP}},
    \item\!$\hat{x}$ be a feasible solution for {\rm\ref{hNLP}},
    \item $(\hat{x},\tilde\upsilon,\tilde\nu,\tilde\tau)$ be the feasible solution for {\rm\ref{eq:dual_nlp}} given by (\ref{eq:tau}--\ref{eq:upsilon}),
    with objective-function value $\hat{\zeta}$.
\end{itemize}
Then, for every optimal solution $x^*$ 
for {\rm\ref{MERSP}}, we have:
\[
\begin{array}{ll}
x_j^*=0, ~ \forall\,j\in N \mbox{ such that } \hat{\zeta}-{\rm LB}<  \tilde{\upsilon}_j\,,\\
x_j^*=1, ~ \forall\,j\in N \mbox{ such that } 
\hat{\zeta}-{\rm LB} < \tilde{\nu}_j\,.
\end{array}
\]
\end{theorem}


\subsection{Constructing subproblems}\label{sec:subprob}

In this section, we investigate down- and up-branching strategies for \ref{MERSP} B\&B subproblems. 
For a given variable $x_j$\,, we consider two branching strategies for \ref{MERSP}, described in terms of  the more-general problem \ref{MEDP}:
\begin{itemize}
    \item \hypertarget{Sfixtarget}{$\Sfix$} (\emph{constrained}): creates two child subproblems by appending the constraint $x_j=0$ or $x_j=1$ to each subproblem created, and keeps $C_1$ and $C_2$ unchanged;
    \item \hypertarget{Rfixtarget}{$\Rfix$} (\emph{reduction}): creates two child subproblems in which  $C_1$ and $C_2$ are replaced by matrices   of order $n-1$. Specifically, for $k=1,2$:  
    \begin{itemize}
    \item in the child subproblem corresponding to  $x_j=0$, $C_k$ is replaced by the principal submatrix $\tilde C_k^0 := C_k[N\setminus j,N\setminus j]$;
    \item in the child subproblem corresponding to $x_j=1$, $C_k$ is replaced by  the Schur complement 
    \[
    \tilde C_k^1 := C_k/C_k[j,j] = C_k[N\setminus j, N\setminus j] - \frac{1}{C_k[j,j]}\,C_k[N\setminus j, j]\,C_k[j,N\setminus j],
    \]
    and the constant $\log(C_1[j,j]/C_2[j,j])$ is added to the objective.
\end{itemize}
\end{itemize}

We begin by observing that matrix reduction preserves the Loewner order. 
 
\begin{proposition}\label{prop:psd_preserved}
Let $C_2 \succeq C_1 \succeq 0$. Let  $(\tilde C_1^0, \tilde C_2^0)$ and $(\tilde C_1^1, \tilde C_2^1)$ be the matrices obtained from $(C_1,C_2)$ by the $\Rfix$ strategy, in the subproblems corresponding to $x_j=0$ and $x_j=1$, respectively. Then $\tilde C_2^0 \succeq \tilde C_1^0 \succeq 0$ and $\tilde C_2^1 \succeq \tilde C_1^1 \succeq 0$.
\end{proposition}
\begin{proof}
For the subproblem corresponding to $x_j=0$, the result follows immediately from the fact that $C_2 \succeq C_1$ implies $C_2[N\setminus j,N\setminus j] \succeq C_1[N\setminus j,N\setminus j]$. 
For the subproblem corresponding to $x_j=1$, from \citep*[Theorem 3.1(b)]{li2000extremal} we have that $C_2/C_2[j,j] \succeq C_1/C_1[j,j]$.
\qed\end{proof}

Assuming $C_2 \succ 0$, we now investigate the behavior of the scaling $\psi$ in \ref{hNLP} under the two branching strategies. By Theorem \ref{thm:best_alpha_NLPId}, the bound is nonincreasing in $\psi$, so it is natural to work with the largest admissible value, which we denote $\psi^*$. The $\Sfix$ strategy leaves the data matrices unchanged, so $\psi^*$ is constant across subproblems. The $\Rfix$ strategy, on the other hand, modifies the matrices; we demonstrate below that $\psi^*$ cannot decrease under this modification, and may in fact increase. 
\begin{lemma}\label{lem:Rfix_better_psi_over_Ffix}
Let $C_2 \succeq C_1 \succeq 0$ with $C_2 \succ 0$. Let  $(\tilde C_1^0, \tilde C_2^0)$ and $(\tilde C_1^1, \tilde C_2^1)$ be the matrices obtained from $(C_1,C_2)$ by the $\Rfix$ branching strategy, in the subproblems corresponding to $x_j=0$ and $x_j=1$, respectively.

Let
\[
\psi^* := 1 / \lambda_1(C_2^{-1/2} C_1 C_2^{-1/2}), \quad
\tilde\psi^*_0 := 1 / \lambda_1((\tilde C_2^0)^{-1/2} \tilde C_1^0 (\tilde C_2^0)^{-1/2}), \quad
\tilde\psi^*_1 := 1 / \lambda_1((\tilde C_2^1)^{-1/2} \tilde C_1^1 (\tilde C_2^1)^{-1/2}),
\]
which are the best 
scalings for the original problem and the subproblems corresponding to $x_j=0$ and $x_j=1$, respectively (see Theorem \ref{thm:best_alpha_NLPId}). 
Then, we have $\tilde\psi^*_0 \geq \psi^*$ and $\tilde\psi^*_1 \geq \psi^*$.
\end{lemma}
\begin{proof}
We show that $\tilde C_2^\ell \succeq \psi^* \tilde C_1^\ell$, for $\ell=0,1$.
\begin{itemize}
    \item For the subproblem corresponding to $x_j=0$: $C_2 \succeq \psi^* C_1$ implies $C_2[N \setminus j, N \setminus j] \succeq \psi^* C_1[N \setminus j, N \setminus j]$, so $\tilde C_2^0 \succeq \psi^* \tilde C_1^0$\,.
    \item For the subproblem corresponding to $x_j=1$: By \citep*[Theorem 3.1(b)]{li2000extremal}, $C_2 \succeq \psi^* C_1$ gives $C_2 / C_2[j,j] \succeq (\psi^* C_1) / (\psi^* C_1)[j,j]$. Because $(\psi^* C_1)/(\psi^* C_1)[j,j] = \psi^*(C_1/C_1[j,j])$, we obtain $\tilde C_2^1 \succeq \psi^* \tilde C_1^1$\,.
\end{itemize}
Then, for $\ell=0,1$, we have $I_{n-1} \succeq \psi^*\, (\tilde C_2^\ell)^{-1/2} \tilde C_1^\ell (\tilde C_2^\ell)^{-1/2}$, and hence $\lambda_1((\tilde C_2^\ell)^{-1/2} \tilde C_1^\ell (\tilde C_2^\ell)^{-1/2}) \leq (\psi^*)^{-1}$, and therefore $\tilde\psi^*_\ell \geq \psi^*$.
\qed\end{proof}

For hNLP-Id, we show that the bound obtained by applying the branching strategy $\Rfix$ is at least as strong as the bound obtained by applying $\Sfix$. Our analysis relies on the  monotonicity of the  objective function of hNLP-Id with respect to the scaling parameters $\psi$ and $\gamma$. Analogous comparisons of branching strategies within B\&B frameworks  for the related  problems \ref{MESP} and another generalized version of \ref{MESP} can be found in \citep*[Sec. 4]{MESP2DOPT} and \citep*[Sec. 7]{ExtendedGMESP}. 

\begin{lemma}\label{lem:hNLPId_nonincreasing}
    Let $C_1,C_2\in\mathbb{S}^n_+$\,, $\psi>0$, $\Phi\in\mathbb{R}^n_{++}$\,, and $\hat{x} \in [0,1]^n$ with  $\mathbf{e}^\top \hat{x} = s$. Assume that $C_2 \succeq  \psi C_1 \succeq 0$.  Define $\hat{C}_1:=\psi C_1$\,, $\hat{C}_2:= C_2$\,. For $k=1,2$ and $\gamma>0$, let
    \[
L_k(\gamma) := I_n + \Diag(\hat{x})^{\scriptscriptstyle 1/2}(\gamma \Diag(\Phi)\hat{C}_k\Diag(\Phi) - I_n)\Diag(\hat{x})^{\scriptscriptstyle 1/2},
\]
and assume that $L_k(\gamma)\succ 0$  \, for all  $\gamma>0$.  
Let 
$h(\gamma) := \ldet L_1(\gamma) - \ldet L_2(\gamma) - s\log\psi$. 
Then, $h(\gamma)$ is nonincreasing on $\mathbb{R}_{++}$\,.
\end{lemma}

\begin{proof}
    For $k=1,2$, we have 
\[
\frac{\partial \ldet (L_k)}{\partial \gamma} = \tfrac{1}{\gamma}\bigl(n - \Tr\bigl(L_k(\gamma)^{-1}\Diag(\mathbf{e}-\hat{x})\bigr)\bigr),
\]
and therefore
\begin{align*}
h'(\gamma) 
&= \tfrac{1}{\gamma}\Bigl(\Tr\bigl(L_2(\gamma)^{-1}\Diag(\mathbf{e}-\hat{x})\bigr) - \Tr\bigl(L_1(\gamma)^{-1}\Diag(\mathbf{e}-\hat{x})\bigr)\Bigr) \\
&= \tfrac{1}{\gamma}\,\Tr\Bigl(\Diag(\mathbf{e}-\hat{x})^{\scriptscriptstyle 1/2}\bigl(L_2(\gamma)^{-1}-L_1(\gamma)^{-1}\bigr)\Diag(\mathbf{e}-\hat{x})^{\scriptscriptstyle 1/2}\Bigr).
\end{align*}
As $\hat C_2 \succeq \hat C_1$\,, we have 
\begin{align*}
    &\Diag(\Phi)\hat C_2\Diag(\Phi) \succeq \Diag(\Phi)\hat C_1\Diag(\Phi) \Rightarrow \\
    &\gamma \Diag(\Phi)\hat C_2\Diag(\Phi) -I_n\succeq \gamma \Diag(\Phi)\hat C_1\Diag(\Phi)-I_n \Rightarrow\\
    &\Diag(\hat{x})^{1/2}(\gamma \Diag(\Phi)\hat  C_2\Diag(\Phi)-I_n)\Diag(\hat{x})^{1/2}\succeq \Diag(\hat{x})^{1/2}(\gamma \Diag(\Phi)\hat C_1\Diag(\Phi)-I_n)\Diag(\hat{x})^{1/2}\Rightarrow\\
    &L_2(\gamma)\succeq L_1(\gamma) ~\Rightarrow~ L_1(\gamma)^{-1}\succeq L_2(\gamma)^{-1}.
\end{align*}
The result follows.\qed
\end{proof}

\begin{theorem}\label{thm:Rfix_better_NLPId_over_Ffix}
Suppose that $C_2 \succeq C_1 \succeq 0$ and $C_2\succ 0$. Then, for the \rm{hNLP-Id} relaxation, the bound obtained under the $\Rfix$ branching strategy is at least as strong as the bound obtained under the  $\Sfix$ branching strategy. 
\end{theorem}
\begin{proof}
Fix any $\hat{x} \in [0,1]^n$ satisfying $\mathbf{e}^\top \hat{x} = s$. Let $x_j$ be the branching variable and $\hat{\Phi} \in \mathbb{R}^n_{++}$\, be arbitrary. We consider the following definitions.
\begin{itemize}
    \item[For $\Sfix$:] 
    \[
    \begin{array}{l}
         L_1(x;\gamma, \psi,\Phi) := I_n + \Diag({x})^{\scriptscriptstyle 1/2}(\gamma \psi \Diag(\Phi)C_1\Diag(\Phi) - I_n)\Diag({x})^{\scriptscriptstyle 1/2},\\[4pt]
         L_2(x;\gamma,\Phi) := I_n + \Diag({x})^{\scriptscriptstyle 1/2}(\gamma \Diag(\Phi)C_2\Diag(\Phi) - I_n)\Diag({x})^{\scriptscriptstyle 1/2},
    \end{array}
    \]
    and
    \[
    h(x;\gamma, \psi,\Phi) := \ldet L_1(x;\gamma, \psi,\Phi) - \ldet L_2(x;\gamma,\Phi) - s\log\psi,
    \]
    for $x\in\mathbb{R}^n$, $\gamma>0$, $\psi>0$, and $\Phi\in\mathbb{R}^n_{++}$\,.
    \item[ For $\Rfix$:]   For $\ell=0,1$,  
    \[
    \begin{array}{l}
         \tilde{L}_1^\ell(x;\gamma, \psi,\Phi) := I_{n-1} + \Diag({x})^{\scriptscriptstyle 1/2}(\gamma \psi \Diag(\Phi)\tilde{C}_1^\ell\Diag(\Phi) - I_{n-1})\Diag({x})^{\scriptscriptstyle 1/2},\\[4pt]
         \tilde{L}_2^\ell(x;\gamma,\Phi) := I_{n-1} + \Diag({x})^{\scriptscriptstyle 1/2}(\gamma \Diag(\Phi)\tilde{C}_2^\ell\Diag(\Phi) - I_{n-1})\Diag({x})^{\scriptscriptstyle 1/2},
    \end{array}
    \]
    and
    \[
    \begin{array}{l}
    \tilde{h}^0(x;\gamma, \psi,\Phi) := \ldet \tilde{L}_1^0(x;\gamma, \psi,\Phi) - \ldet \tilde{L}_2^0(x;\gamma,\Phi)- s\log\psi,\\[4pt]
    \tilde{h}^1(x;\gamma, \psi,\Phi) := \ldet \tilde{L}_1^1(x;\gamma, \psi,\Phi) - \ldet \tilde{L}_2^1(x;\gamma,\Phi)- (s-1)\log\psi + \log(C_1[j,j]/C_2[j,j]),
    \end{array}
    \]
    for $x\in\mathbb{R}^{n-1}$, $\gamma>0$, $\psi>0$, and $\Phi\in\mathbb{R}^{n-1}_{++}$\,.
\end{itemize}
 Next, consider $\tilde{x}:=\hat{x}[N\setminus j]$, $\tilde{\Phi}:=\hat{\Phi}[N\setminus j]$, and
\[
\begin{array}{l}
     M := I_{n-1} + \Diag(\tilde{x})^{\scriptscriptstyle 1/2}(\gamma \psi \Diag(\tilde{\Phi})C_1[N\setminus j, N\setminus j]\Diag(\tilde{\Phi}) - I_{n-1})\Diag(\tilde{x})^{\scriptscriptstyle 1/2},\\[4pt]
     w := \hat{x}_j^{\scriptscriptstyle 1/2}\gamma \psi \hat{\Phi}_j \Diag(\tilde{x})^{\scriptscriptstyle 1/2}\Diag(\tilde{\Phi}) C_1[N\setminus j, j],
     \\[4pt]
     y := 1 + \hat{x}_j\big(\gamma \psi \hat{\Phi}_j^2 C_1[j,j] - 1\big).
\end{array}
\]
Note, from the block structure of $\Diag(\hat{\Phi})C_1\Diag(\hat{\Phi})$, that
\[
L_1(\hat{x};\gamma, \psi,\hat{\Phi}) = \left(\begin{smallmatrix} M & w \\ w^\top & y \end{smallmatrix}\right). 
\]
Then,
\begin{itemize}
    \item if $\hat{x}_j = 0$, then $w = \mathbf{0}$ and $y = 1$. As  $\tilde{C}_1^0 = C_1[N\setminus j, N\setminus j]$, we have  $M = \tilde{L}_1^0(\tilde{x};\gamma, \psi,\tilde{\Phi})$. Therefore, $L_1(\hat{x};\gamma, \psi,\hat{\Phi}) = \left(\begin{smallmatrix} \tilde{L}_1^0(\tilde{x};\ \gamma,  \psi,\tilde{\Phi}) & \mathbf{0} \\ \mathbf{0}^\top & 1 \end{smallmatrix}\right)$, which implies $\ldet L_1(\hat{x};\gamma, \psi,\hat{\Phi}) = \ldet \tilde{L}_1^0(\tilde{x};\gamma, \psi,\tilde{\Phi})$. With an analogous argument, we can verify that $\ldet L_2(\hat{x};\gamma,\hat{\Phi}) = \ldet \tilde{L}_2^0(\tilde{x};\gamma,\tilde{\Phi})$. Hence, 
    \begin{equation}\label{eq:NLPId_StoR_0}
    h(\hat{x};\gamma, \psi,\hat{\Phi}) = \tilde{h}^0(\tilde{x};\gamma, \psi,\tilde{\Phi}).
    \end{equation}
    
    \item if $\hat{x}_j = 1$, then $y = \gamma \psi \hat{\Phi}_j^2 C_1[j,j]$ and $w = \gamma \psi \hat{\Phi}_j \Diag(\tilde{x})^{\scriptscriptstyle 1/2}\Diag(\tilde{\Phi}) C_1[N\setminus j, j]$. Taking the Schur complement of the scalar block $y$\, in $L_1(\hat{x};\gamma,\psi,\hat{\Phi})$ yields 
    \[
    \ldet L_1(\hat{x};\gamma, \psi,\hat{\Phi}) = \ldet\big(M - \tfrac{1}{y} w w^\top\big) + \log y\,.
    \]
    As
    \[
    \tfrac{1}{y} w w^\top = \tfrac{\gamma \psi}{C_1[j,j]} \Diag(\tilde{x})^{\scriptscriptstyle 1/2}\Diag(\tilde{\Phi}) C_1[N\setminus j, j] C_1[j, N\setminus j] \Diag(\tilde{\Phi}) \Diag(\tilde{x})^{\scriptscriptstyle 1/2},
    \]
   we have
    \[
    M - \tfrac{1}{y} w w^\top = I_{n-1} + \Diag(\tilde{x})^{\scriptscriptstyle 1/2}(\gamma \psi \Diag(\tilde{\Phi})\tilde{C}_1^1\Diag(\tilde{\Phi}) - I_{n-1})\Diag(\tilde{x})^{\scriptscriptstyle 1/2} = \tilde{L}_1^1(\tilde{x};\gamma, \psi,\tilde{\Phi}).
    \]
    Hence, $\ldet L_1(\hat{x};\gamma, \psi,\hat{\Phi}) = \ldet \tilde{L}_1^1(\tilde{x};\gamma, \psi,\tilde{\Phi}) + \log(\gamma \psi \hat{\Phi}_j^2 C_1[j,j])$. With an analogous argument, we can verify that  $\ldet L_2(\hat{x};\gamma,\hat{\Phi}) = \ldet \tilde{L}_2^1(\tilde{x};\gamma,\tilde{\Phi}) + \log(\gamma \hat{\Phi}_j^2 C_2[j,j])$. Hence,
    \begin{equation}\label{eq:NLPId_StoR_1}
    \begin{array}{l}
    h(\hat{x};\gamma, \psi,\hat{\Phi}) = \ldet L_1(\hat{x};\gamma, \psi,\hat{\Phi}) - \ldet L_2(\hat{x};\gamma,\hat{\Phi}) -s\log\psi\\
    \qquad=\ldet \tilde{L}_1^1(\tilde{x};\gamma, \psi,\tilde{\Phi}) - \ldet \tilde{L}_2^1(\tilde{x};\gamma,\tilde{\Phi}) -s\log\psi + \log\frac{\psi C_1[j,j]}{C_2[j,j]} = \tilde{h}^1(\tilde{x};\gamma, \psi, \tilde{\Phi}).
    \end{array}
    \end{equation}
\end{itemize}

 Let 
    \[
    \hat\gamma := 1/\lambda_1(\Diag(\hat\Phi) C_2\Diag(\hat\Phi)),\qquad \tilde\gamma_\ell:=1/\lambda_1(\Diag(\tilde\Phi)\tilde C_2^\ell\Diag(\tilde\Phi)),~ \ell=0,1.
    \]
    We note that $
\lambda_1(\Diag(\tilde\Phi)\tilde C_2^\ell\Diag(\tilde\Phi)) \leq \lambda_1(\Diag(\hat\Phi) C_2\Diag(\hat\Phi))$,
where the inequality follows from  \citep*[Theorem 4.3.8]{HJBook} for $\ell=0$, and from \citep*[Corollary 2.3]{SchurBook} for $\ell=1$. Then, we have that 
\begin{equation}\label{comp_gamma}
    \tilde\gamma_0 \geq \hat\gamma,~\quad\mbox{ and }\quad \tilde\gamma_1 \geq \hat\gamma.
\end{equation}

 Let
\[
\hat\psi^* := 1 / \lambda_1(C_2^{-1/2} C_1 C_2^{-1/2}), \qquad
\tilde\psi^*_\ell := 1 / \lambda_1((\tilde C_2^\ell)^{-1/2} \tilde C_1^\ell (\tilde C_2^\ell)^{-1/2}),~\ell=0,1,
\]
be optimal parameters $\psi$ for the subproblems derived from the $\Sfix$ branching strategy, and the $\Rfix$  branching strategy for $\ell=0,1$, respectively (note that the optimal $\psi$ is invariant under g-scaling; see Lemma \ref{lem:help_psi_eigvals} in the Appendix). Then, by Lemma \ref{lem:Rfix_better_psi_over_Ffix}, we have that 
\begin{equation}\label{comp_psi}
\tilde\psi^*_0 \geq \hat\psi^*\quad\mbox{ and }\quad \tilde\psi^*_1 \geq \hat\psi^*.
\end{equation}

We  conclude that, 
\begin{itemize}
    \item if $\hat{x}_j = 0$,
    \begin{equation}\label{for0}
     h(\hat{x};\hat\gamma, \hat\psi^*,\hat{\Phi}) = \tilde{h}^0(\tilde{x};\hat\gamma, \hat\psi^*,\tilde{\Phi}) \geq \tilde{h}^0(\tilde{x};\tilde\gamma_0, \hat\psi^*,\tilde{\Phi}) \geq \tilde{h}^0(\tilde{x};\tilde\gamma_0, \tilde\psi^*_0,\tilde{\Phi}),
    \end{equation}
    where the equality follows from \eqref{eq:NLPId_StoR_0}. The first inequality follows from Lemma \ref{lem:hNLPId_nonincreasing} and \eqref{comp_gamma}, while
    the last inequality follows from Lemma \ref{lem:nlp_aug_psi_decrease} and \eqref{comp_psi}.
    \item  if $\hat{x}_j = 1$,
    \begin{equation}\label{for1}
     h(\hat{x};\hat\gamma, \hat\psi^*,\hat{\Phi}) = \tilde{h}^1(\tilde{x};\hat\gamma, \hat\psi^*,\tilde{\Phi}) \geq \tilde{h}^1(\tilde{x};\tilde\gamma_1, \hat\psi^*,\tilde{\Phi}) \geq \tilde{h}^1(\tilde{x};\tilde\gamma_1, \tilde\psi^*_1,\tilde{\Phi}),
    \end{equation}
    where the equality follows from \eqref{eq:NLPId_StoR_1}. 
    The first inequality follows from Lemma \ref{lem:hNLPId_nonincreasing} and \eqref{comp_gamma}, while the last inequality follows from Lemma \ref{lem:nlp_aug_psi_decrease} and \eqref{comp_psi}.
\end{itemize}
Finally, let us consider the child subproblem derived from the $\Rfix$ branching strategy corresponding to $x_j=0$. Consider the hNLP-Id relaxation for this subproblem, where the scaling parameters are $\tilde{\psi}^*_0$ and  $\tilde{\Phi}$. Assume that $\tilde{x}$ is an optimal solution for this subproblem, and let $\hat{x}[N\setminus j]:=\tilde{x}$ and $\hat{x}_j:=0$.  Let $\hat{\Phi}$ be the best known g-scaling parameter for the hNLP-Id relaxation for the subproblem derived from the $\Sfix$ branching strategy corresponding to $x_j=0$, and assume that $\tilde{\Phi}=\hat{\Phi}[N\setminus j]$. 
Then, considering \eqref{for0}, we have 
\[
\znlp(\tilde{C}^0_1,\tilde{C}^0_2,s;\tilde\psi_0^*,\tilde{\Phi}) = \tilde{h}^0(\tilde{x};\tilde\gamma_0, \tilde\psi^*_0,\tilde{\Phi}) 
    \leq {h}(\hat{x};\hat\gamma, \hat\psi^*,\hat{\Phi}).
\]
Note that ${h}(\hat{x};\hat\gamma, \hat\psi^*,\hat{\Phi})$ is a lower bound on the optimal value of the hNLP-Id relaxation for the subproblem derived from the $\Sfix$ branching strategy corresponding to $x_j=0$. Therefore, $
\znlp(\tilde{C}^0_1,\tilde{C}^0_2,s;\tilde\psi_0^*,\tilde{\Phi})
$
is not greater  than the optimal value of this relaxation.

With an analogous analysis and by considering \eqref{for1}, we conclude that $
\znlp(\tilde{C}^1_1,\tilde{C}^1_2,s-1;\tilde\psi_1^*,\tilde{\Phi}) 
$
is not greater  than the optimal value of the hNLP-Id relaxation for the subproblem derived from the $\Sfix$ branching strategy corresponding to $x_j=1$.
The result follows.
\qed
\end{proof}

\begin{remark}
    Theorem \ref{thm:Rfix_better_NLPId_over_Ffix} does not extend to the \rm{hNLP-Di} and \rm{hNLP-Tr}, whose objective functions are not in general convex or monotone in $\gamma$.
\end{remark}

Theorem \ref{thm:Rfix_better_NLPId_over_Ffix} establish that the $\Rfix$ branching strategy is no worse than the $\Sfix$ branching strategy. The following example demonstrates that the domination can be strict, both when we fix a variable at zero or at one.
\begin{example}\label{ex:RvsC}
    Consider an instance $C:=\left(\begin{smallmatrix}
        C[N,N] & C[N,T]\\
        C[N,T]^\top & C[T,T]
    \end{smallmatrix}\right)$, with $n:=4$, $t :=4$, $s:= 2$, and where  
    $
    C[N,N] := \Diag((0.1,2,2,2)),~ C[N,T] := \Diag((1/\sqrt{110},1,1,1)),~C[T,T] := I_4$\,. Then, for $\Phi:=\mathbf{e}$, we have 
\begin{center}
\begin{tabular}{c|c|cc|c}
        & & \multicolumn{2}{c|}{ \rm{ \NLPcompMERSP}} & \\[-2pt]
        & & \multicolumn{2}{c|}{\rm{bound}} & \\[2pt]
\rm{case} &\rm{strategy}& $\psi:=1$
& $\psi:=\psi^*$& $\psi^*$ \\[2pt] \hline
\multirow{2}{*}{$x_1 = 0$}
  & $\Sfix$ & $-0.24$ & $-0.41$ & $1.1$ \\
  & $\Rfix$ & $-1.22$ & $-1.39$ & $2.0$ \\ \hline
\multirow{2}{*}{$x_1 = 1$}
  & $\Sfix$ & $-0.16$ & $-0.25$ & $1.1$ \\
  & $\Rfix$ & $-0.64$ & $-0.79$ & $2.0$
\end{tabular}
\end{center}
 \hfill $\clubsuit$
\end{example}

\begin{remark}\label{rem:RfixvsFfix}
    Theorem \ref{thm:Rfix_better_NLPId_over_Ffix} and Example \ref{ex:RvsC} illustrate the  superiority of the branching strategy $\Rfix$ over the branching strategy $\Sfix$. In addition, the  subproblems generated by  $\Rfix$ have dimension $n-1$, whereas those generated by  $\Sfix$ retain the original dimension. Consequently, from a computational standpoint, $\Rfix$ is generally expected to be more efficient than $\Sfix$, as it produces stronger relaxations while simultaneously reducing the size of the subproblems.
\end{remark}


\section{Numerical experiments}\label{sec:exp}

We used the general-purpose solver IPOPT
1.14.3 (see \citep*{IPOPT}) 
and the conic solver MOSEK 11.2.0 (see \citep*{MOSEK}), via the Julia wrapper MOSEKTools.jl  v0.15.10) to solve the convex optimization problems. These are commonly used for the kind of problems that we solve; more specifically, we used  
IPOPT
for all relaxations except to  solve the problems \eqref{eq:nlp-trace} and \eqref{eq:min_maxeig_C2_subprob_reformulation}. All of our algorithms were implemented in Julia v1.11.9. 
We used the parameter settings for the solvers aiming at their best performance. For IPOPT, we used:
\texttt{TOL} = $10^{-7}$ (convergence tolerance). 

For hNLP-Di and hNLP-Tr, the scaling factor $\gamma$ should be selected in the interval between $1/d_{\max}$ and $1/d_{\min}$\,. In our experiments, we have tested 50 evenly spaced values for $\gamma$ in this interval and report the results for the best one.  

We ran our experiments on macOS 15.6: an Apple M4 Pro chip with 14 cores (10 performance and 4 efficiency) and 48 GB of memory. 

When comparing different upper bounds for our test instances, we present the gaps defined by the difference between the upper bounds for \ref{MERSP}, and the lower bounds computed by a local-search heuristic.

We use labels 
strategy, strategy$^{\psi}$, strategy$^{\psi}_\Phi$, to indicate that the hyper-scaled NLP bound was computed using the parameter-selection rule \emph{strategy} $\in\{\IdentityNLP, \DiagonalNLP, \TraceNLP\}$, with the following choices of scaling parameters:
\begin{itemize}
    \item \emph{strategy} : $\psi:=1$ and $\Phi:=\mathbf{e}$;
    \item \emph{strategy}$^{\psi}$ : $\psi:=\psi^*$ and $\Phi:=\mathbf{e}$;
    \item \emph{strategy}$^{\psi}_\Phi$ : $\psi:=\psi^*$ and $\Phi:=\hat{\Phi}$,
\end{itemize}
where $\hat{\Phi}$ is obtained by solving  \eqref{eq:min_maxeig_C2_subprob}. For the $\IdentityNLP$\ strategy, the BFGS procedure described in Section \ref{subsec:optimize-Psi} is additionally applied to further improve the value of $\Phi$. We note that we compute $\psi^*$ considering the g-scaled matrices.

In \eqref{eq:min_maxeig_C2_subprob}, the matrix $\Diag(\Phi)C_2\Diag(\Phi)$ depends quadratically on $\Phi$, so the semidefinite constraint is not affine in the decision variables. Consequently, a standard semidefinite-programming solver requires a  Schur-complement lifting to an LMI of size $2n\times 2n$. As $C_2\succeq0$,  the constraint $\tau I_n-\Diag(\Phi)C_2\Diag(\Phi)\succeq 0$ is equivalent to
\[
\begin{pmatrix}\tau I_n & \Diag(\Phi)C_2^{1/2}\\ C_2^{1/2}\Diag(\Phi) & I_n\end{pmatrix}\succeq 0,
\]
which is affine in $(\tau,\Phi)$. 

The following result eliminates this lifting. By exploiting a factorization $C_2=FF^\top$, it yields an equivalent convex formulation for \eqref{eq:min_maxeig_C2_subprob}, whose semidefinite constraint is  affine in the decision variables and has size $k\times k$, where $k$ may be chosen equal to  $\rank(C_2)$\,. We use this formulation to compute $\Phi$ in our computational experiments.

\begin{proposition}
    Let $C_2 \succeq 0$ and consider a factorization $C_2 = FF^\top$, with $F \in \mathbb{R}^{n \times k}$ for some $k$ satisfying $\rank(C_2) \leq k \leq n$. Then \eqref{eq:min_maxeig_C2_subprob} is equivalent to the following convex optimization problem:
    \begin{equation}\label{eq:min_maxeig_C2_subprob_reformulation}
         \min\{\tau \,:\,\tau I_k - F^\top\Diag(w)F\succeq 0,\,\left(\,{\textstyle\prod_{i=1}^n w_{i}}
\,\right)^{1/n}\geq   1,\, w \geq \mathbf{0}\}.
    \end{equation}
\end{proposition}
\begin{proof}
    To prove the equivalence, we demonstrate that any feasible solution of one problem yields a feasible solution of the other with the same objective value. 
    \begin{itemize}
        \item[(i)] Let $(\hat\tau,\hat\Phi)$ be a feasible solution of \eqref{eq:min_maxeig_C2_subprob}. Note that $\hat \Phi \in \mathbb{R}^n_{++}$\,.
        Define $\hat{w}_i := \hat\Phi_i^2$ for $i \in N$. The strict positivity $\hat\Phi_i > 0$ gives $\hat w_i > 0$. Moreover, as  $\prod_{i=1}^n \hat\Phi_i \geq 1$, we have that
        \[\textstyle
\big(\prod_{i=1}^n \hat w_i\big)^{1/n}
= \big(\prod_{i=1}^n \hat\Phi_i^2\big)^{1/n}
= \big(\prod_{i=1}^n \hat\Phi_i\big)^{2/n}
\geq 1.
\]
Let  $\hat M := \Diag(\hat\Phi)F$. Then
\begin{equation}\label{eq:prop_reformulation_C2eigmin_M}
    \Diag(\hat\Phi) C_2 \Diag(\hat\Phi) = \Diag(\hat\Phi) F F^\top \Diag(\hat\Phi) = \hat M \hat M^\top,
~~~ 
F^\top \Diag(\hat w) F = F^\top \Diag(\hat\Phi)^2 F = \hat M^\top \hat M.
\end{equation}
The matrices $\hat M \hat M^\top$ and $\hat M^\top \hat M$ share the same nonzero eigenvalues, so $\lambda_{1}(\hat M \hat M^\top) = \lambda_{1}(\hat M^\top \hat M)$, and therefore
\[
\Diag(\hat\Phi) C_2 \Diag(\hat\Phi) \preceq \hat\tau I_n ~\Rightarrow~F^\top \Diag(\hat w) F \preceq \hat\tau I_k\,.
\]
Hence, $(\hat\tau, \hat w)$ is feasible for \eqref{eq:min_maxeig_C2_subprob_reformulation}.

\item[(ii)] Let $(\hat\tau,\hat w)$ be a feasible solution of \eqref{eq:min_maxeig_C2_subprob_reformulation}. Note that $\hat w \in \mathbb{R}^n_{++}$\,.
Define $\hat{\Phi}_i := \sqrt{\hat w_i}$ for $i \in N$. The strict positivity $\hat w_i > 0$ gives $\hat \Phi_i > 0$. Moreover, $\prod_{i=1}^n \hat w_i \geq 1$ implies
        \[\textstyle
\big(\prod_{i=1}^n \hat \Phi_i\big)^{1/n}
= \big(\prod_{i=1}^n \sqrt{\hat w_i}\big)^{1/n}
= \big(\prod_{i=1}^n \hat w_i\big)^{1/2n}
\geq 1.
\]
Using again \eqref{eq:prop_reformulation_C2eigmin_M}, 
$\lambda_1(F^\top \Diag(\hat w) F) = \lambda_1(\Diag(\hat\Phi) C_2 \Diag(\hat\Phi))$, and therefore
\[
F^\top \Diag(\hat w) F \preceq \hat\tau I_k ~\Rightarrow~\Diag(\hat\Phi) C_2 \Diag(\hat\Phi) \preceq \hat\tau I_n\,.
\]
Hence, $(\hat\tau, \hat \Phi)$ is feasible for \eqref{eq:min_maxeig_C2_subprob}.     \qed
    \end{itemize}
\end{proof}


\subsection{Dominance of the complementary NLP bound over the spectral bound for \ref{MERSP}}\label{subsec:dom-nlp-bound}

In Theorem \ref{thm:nlp_dom_spec}, we presented sufficient conditions under which the complementary NLP bound dominates the spectral bound for \ref{MERSP}. In the following experiment, we investigate whether the weakest condition is commonly satisfied in our test instances and how critical its satisfaction is for the dominance to occur. 

For our experiments, we consider three positive-definite covariance matrices of orders $63,\,90,\,124$
 that have been extensively used in the \ref{MESP} literature,  (see, e.g., \citep*{KLQ}, \citep*{LeeConstrained}, \citep*{AFLW_Using}, \citep*{Anstreicher_BQP_entropy}, \citep*{Kurt_linx}, \citep*{gscale}, \citep*{li2025augmented}, \citep*{ADMM4DOPT}, \citep*{MESP2DOPT}, \citep*{ExtendedGMESP}). In the figures that follows, instances corresponding to these matrices are denoted  by $\mathbf{C}(63)$, $\mathbf{C}(90)$, $\mathbf{C}(124)$ respectively. We set $n=40$ and vary $t$. The covariance matrices $C$ used in our instances are constructed as the leading principal submatrices of order $n+t$ from the corresponding benchmark matrices.
 
 In Figure \ref{fig:compare_sc1}, we present six plots arranged in two rows and three columns. In the first row, we give the values of $\Delta:=\textstyle\sum_{\ell=1}^{n-s}\log \lambda_{n-\ell+1}( D_{
        \hat\eta} B D_{\hat\eta})  - \log \lambda_{\ell}( D_{\hat\eta} B_T D_{\hat\eta} )$ where $\hat\eta := \argmin v(\eta)$
        and we highlight zero with a black line. For all the values of $s$ and $t$ such that $\Delta$ is below this black line, Theorem \ref{thm:nlp_dom_spec} guarantees that  the complementary NLP bound dominates the spectral bound. In the second row, we give the actual difference between the complementary NLP bound and the spectral bound (``Bound difference'') given by 
        \[
        \znlp(B^{-1},B_T^{-1},n-s;\psi\!:=\!1,\Phi\!:=\!\mathbf{e}) + \ldet(B) - \ldet(B_T) -\zspectral(B,B_T\,,s),
        \]
        where  the complementary NLP bound considered   is the  minimum upper bound obtained with the three strategies (Identity, Diagonal, and Trace)  for selecting parameters in \ref{hNLP}.
        We again include a black line at zero.

\begin{figure}[!ht]
    \centering    
    \includegraphics[width=\linewidth]{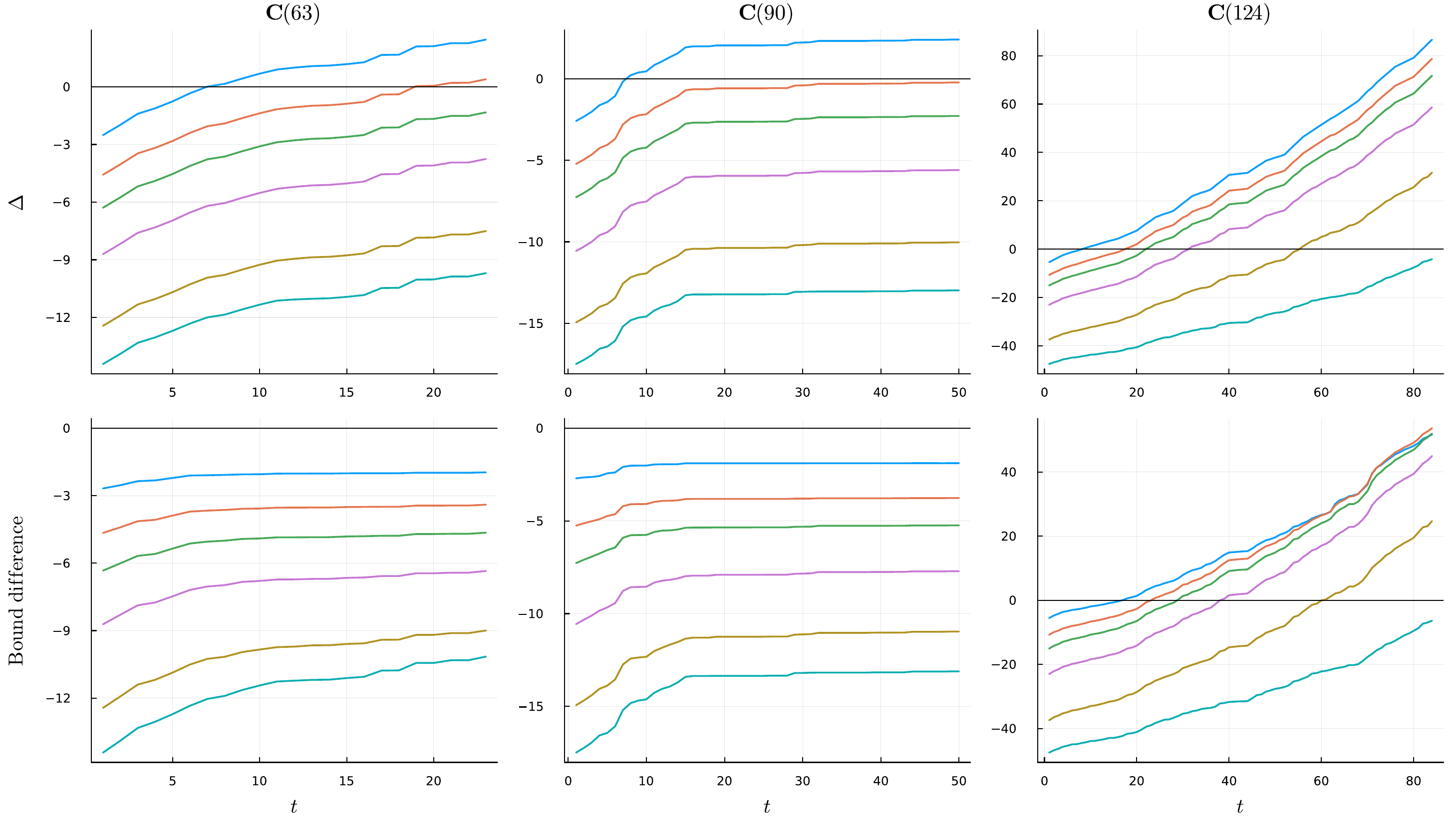}

    \includegraphics[width=0.6\linewidth]{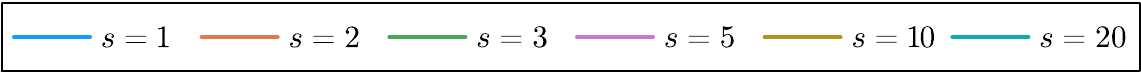}
    \caption{Dominance  condition ($\Delta\!\leq\! 0$) for  complementary NLP bound for \ref{MERSP} over spectral bound for \ref{MERSP}, $n=40$}
    \label{fig:compare_sc1}
\end{figure}
We observe that for $\mathbf{C}(63)$ and $\mathbf{C}(90)$, the weakest condition in Theorem \ref{thm:nlp_dom_spec} ($\Delta\!\leq\! 0$),   is  satisfied for all $t$ when $s\geq 3$, and that the complementary NLP bound strictly dominates the spectral bound for all tested values of $t$ and $s$, even when $\Delta \!> \!0$. 
For $\mathbf{C}(124)$, we note more significant violations of the condition, and in several of such cases, the spectral bound outperforms the complementary NLP bound. Finally, we note that even after violating the strongest condition $t\! \leq\! s$, the inequality  $\Delta \!\leq\! 0$ still holds for some values of $t$. Similarly, after  violating $\Delta \!\leq\! 0$,  the dominance of the complementary NLP bound may still persist. However, for all $s \!\leq\! 10$ and sufficient large $t$ relative to $s$,  $\Delta \!>\!0$ and the spectral bound dominates the complementary NLP bound. We conclude that the proved sufficient condition for the dominance of the NLP bound over the spectral bound is not necessary, but its large violation suggests better performance of the spectral bound.


\subsection{Impact of the proposed scaling procedures on the complementary NLP bound}\label{subsec:res-aug-procedures}

In \S\S\ref{sec:aug-nlp-bound}--\ref{sec:diag-scale}, we discussed strategies for computing the scaling parameters $\psi$ and $\Phi$ in \ref{hNLP} with the goal of  obtaining the strongest possible bound for \ref{MEDP}. We now evaluate the impact of these scaling procedures on the complementary NLP bound for \ref{MERSP} proposed in \citep*{AFLW_Remote}, using the three alternatives to select the parameters $\gamma$ and $d$: $\IdentityNLP$, $\DiagonalNLP$, and $\TraceNLP$. The experiments use the same instances as those considered in \S\ref{subsec:dom-nlp-bound}, all with $n=40$. Now, we set $s:=20$. In all tests, the scaling parameter $\psi^*$ was selected according to Theorem \ref{thm:best_alpha_NLPId},  
and the  g-scaling parameter $\Phi$ was  obtained by solving  \eqref{eq:min_maxeig_C2_subprob}. For the $\IdentityNLP$\ strategy, the BFGS procedure described in \S\ref{subsec:optimize-Psi} was additionally applied to further improve the value of $\Phi$. 

In Figure \ref{fig:compare_nlp_procedures_full_rank_comp_bounds_C63_90_124}, we give the gaps obtained after applying both scaling procedures to the complementary NLP bound.
The $\IdentityNLP$\ strategy provides the best results. This is a particularly appealing outcome because the choice of $\gamma$ is uniquely determined for hNLP-Id, whereas the alternative parameter-selection strategies 
 require substantial additional computational effort to identify  a value of $\gamma$  that produces a high-quality bound.  
  For $\mathbf{C}(124)$, the $\TraceNLP$\ strategy  performs almost as well as the $\IdentityNLP$\ strategy. This behavior is consistent with Theorem \ref{thm:nlp_id_eq_nlp_tr}, which demonstrates that when $\Phi$ is given by the solution of  \eqref{eq:min_maxeig_C2_subprob}, the parameters choice for the $\TraceNLP$\ strategy coincides with those for the $\IdentityNLP$\ strategy. The small improvement of $\IdentityNLP^\psi_\Phi$ over $\TraceNLP^\psi_\Phi$ is due to  the procedure described in \S\ref{subsec:optimize-Psi}, which further optimizes the g-scaling parameter $\Phi$ for $\IdentityNLP^\psi_\Phi$\,.  
 This indicates that, for $\mathbf{C}(124)$, optimizing over $\Phi$ leads only to a marginal improvement which is, in fact,  not visible at the scale of the plot. Finally, we note that the $\DiagonalNLP$ parameter-selection strategy provides the worst results among the three options.

\begin{figure}[!ht]
    \centering
    \includegraphics[width=0.99\linewidth]{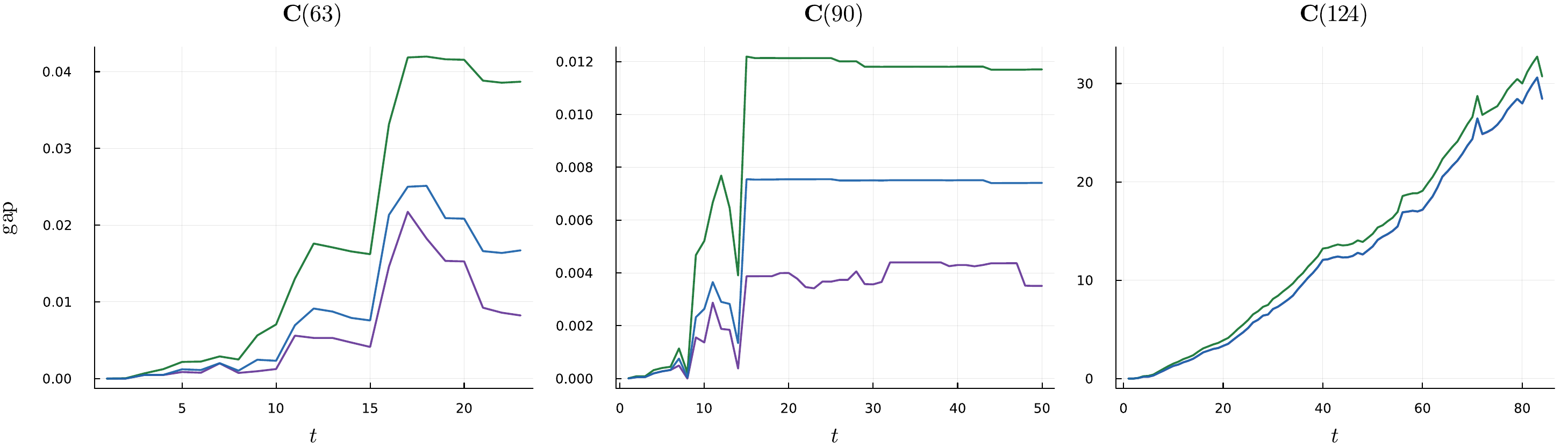}
    \includegraphics[width=0.5\linewidth]{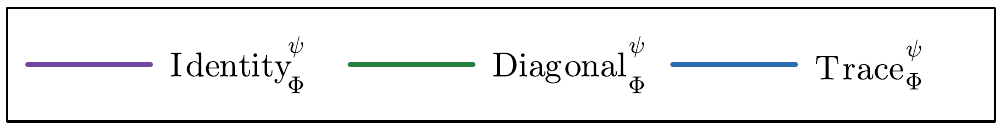}
    \caption{Gaps for the \NLPcompMERSP\ bound, $n=40$, $s=20$
    }
    \label{fig:compare_nlp_procedures_full_rank_comp_bounds_C63_90_124}
\end{figure}

In Figure \ref{fig:compare_nlp_improvement_procedures_full_rank_comp_bounds_C63_90_124}, we illustrate the impact of each scaling procedure when applied to  the complementary NLP bound. We focus on the $\IdentityNLP$ strategy, as it provided  the best results in Figure \ref{fig:compare_nlp_procedures_full_rank_comp_bounds_C63_90_124}.
    We first analyze the effect of the scaling-parameter $\psi$. For $\mathbf{C}(63)$ and $\mathbf{C}(90)$, Identity$^\psi$ coincides with  Identity. In contrast, for $\mathbf{C}(124)$, we see a noticeable improvement using $\psi=\psi^*$ for $t \geq 40$; this is  expected: Corollary \ref{cor:CNT_full_row_rank} establishes that the condition $\rank(C[N,T]) = n$ is necessary for the \NLPcompMERSP\ bound  to  strictly dominate the complementary NLP bound, and this condition is satisfied only for $\mathbf{C}(124)$. We then analyze the impact of g-scaling and observe a significant reductions in the gaps.  For $\mathbf{C}(63)$ and $\mathbf{C}(90)$, most gaps are reduced to values close to zero. Even for the most challenging instance, $\mathbf{C}(124)$, g-scaling provides a significant improvement: for $t=84$, the gap of $\IdentityNLP_{\Phi}$\ is approximately $32\%$ smaller than that of  $\IdentityNLP$. It is also interesting to note that both $\Phi$ and $\psi$ contribute to the enhancement of the bound, and neither alone dominates the other, as illustrated by the results for  $\mathbf{C}(124)$.

\begin{figure}[!ht]
    \centering
    \includegraphics[width=0.99\linewidth]{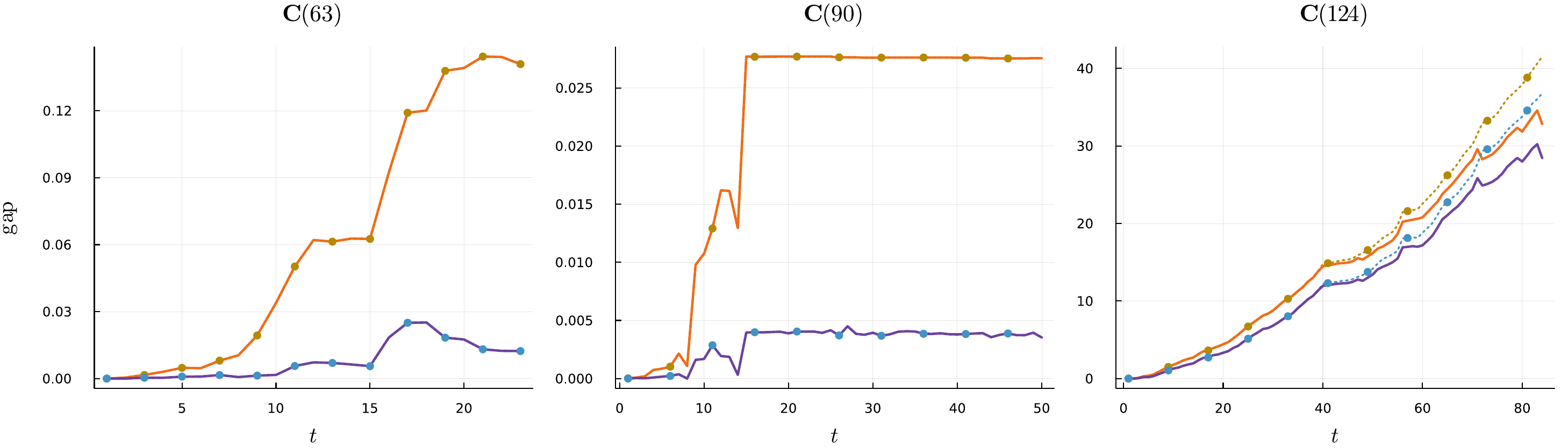}
    \includegraphics[width=0.6\linewidth]{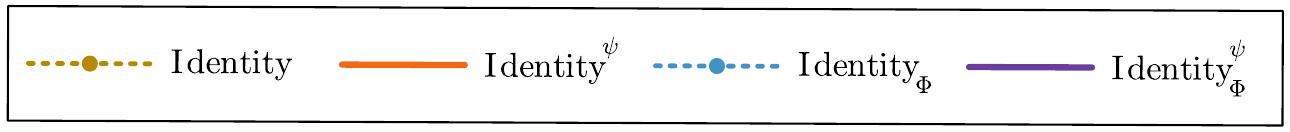}
    \caption{Impact of scaling procedures on the complementary NLP-Id bound, $n=40$, $s=20$ 
}\label{fig:compare_nlp_improvement_procedures_full_rank_comp_bounds_C63_90_124}
\end{figure}


\subsection{MERSP instances with singular covariance matrix}\label{subsec:exp-singular-case}

In \S\ref{subsec:singular-case-thms}, we demonstrated that the scaling parameter $\psi$ extends the application of the \ref{hNLP} bound to a broader class of \ref{MERSP} instances, including those with singular covariance matrices, provided that condition \eqref{eq:cond-psi-geq0-exists} holds. For such  singular instances, the \ref{hNLP} bound, whether used with or without g-scaling, is currently the only available upper-bounding method.  

For the following experiment, we considered the rank-deficient matrix of order 2000 with rank 949 based on Reddit data from  \citep*{Dey2018} and \citep*{Munmun}. This covariance matrix has also been used in experiments discussed in  
\citep*{Weijun,FactPaper,ADMM4DOPT,MESP2DOPT}.  From the order-2000 matrix, we constructed two singular covariance matrices $C$ with distinct features. The main difference between the approaches used to construct the matrices lies in how the sets $N$ and $T$ are chosen from rows (and columns) of the order-2000 matrix. 

For the first matrix constructed ($C^a$), $N$ and $T$ are selected solely to satisfy the two required properties $C[T,T]\succ 0$ and \eqref{eq:cond-psi-geq0-exists}, ensuring that the problem is well-defined. For the second matrix ($C^b$), $N$ and $T$ are chosen not only to satisfy these requirements but also with the goal of maximizing the optimal value of the scaling parameter $\psi^*$ (see \eqref{psistar_orig}). Considering Lemma \ref{lem:nlp_aug_psi_decrease}, this selection aims to accentuate the effect of $\psi$-scaling. 

In Figures \ref{fig:singular-aug-nlp-strategies}--\ref{fig:good_singular_instance_nlp}, we give results for instances with the covariance matrices $C^a$ and $C^b$, with $n=50$, $s=25$ and varying values of $t$. 
In Figure \ref{fig:singular-aug-nlp-strategies}, we present the best final gap obtained using both scaling procedures. Consistent with the behavior observed for \NLPcompMERSP\ in Figure \ref{fig:compare_nlp_procedures_full_rank_comp_bounds_C63_90_124}, the $\IdentityNLP$ strategy yields the strongest bounds for \NLPorigMERSP, with $\IdentityNLP^\psi_\Phi$ performing slightly better than $\TraceNLP^\psi_\Phi$, whereas $\DiagonalNLP^\psi_\Phi$ exhibits the weakest performance. An analogous observation concerning the similarity of the $\IdentityNLP^\psi_\Phi$ and $\TraceNLP^\psi_\Phi$ bounds for $\mathbf{C}(124)$ in Figure \ref{fig:compare_nlp_procedures_full_rank_comp_bounds_C63_90_124} applies here as well. We also observe that the approach used to construct $C^a$ and $C^b$ has a significant impact on the quality of the bounds. For $C^b$, the bounds are very tight, and for $C^a$ with large $t$, the gaps are much larger. 

\begin{figure}[!ht]
    \centering
\includegraphics[width=0.99\linewidth]{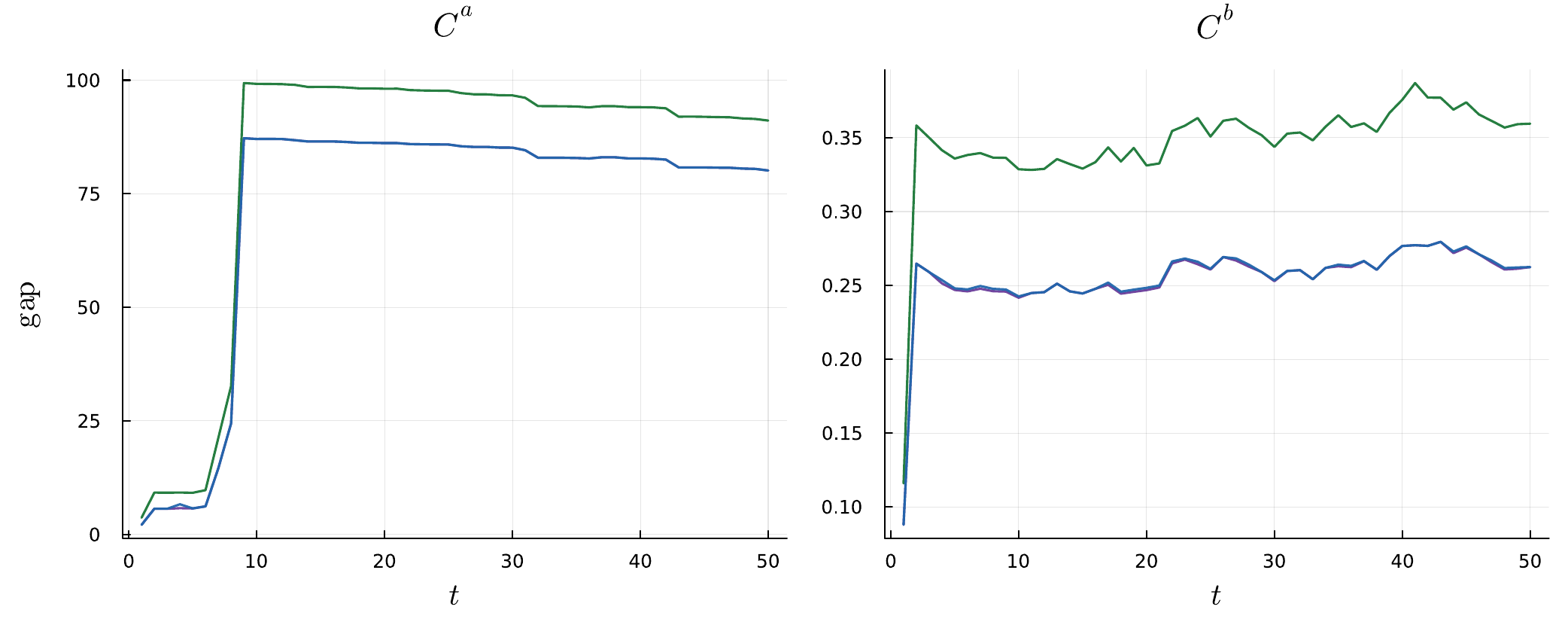}

\includegraphics[width=0.6\linewidth]{best_gaps_singular_legend.pdf}
    \caption{Gaps for the \NLPorigMERSP\ bound on \ref{MERSP} instances with singular covariance matrices, $n\!=\!50,s\!=\!25$
    }
\label{fig:singular-aug-nlp-strategies}
\end{figure}

In Figure \ref{fig:values_of_psi_singular_instances_nlp}, we  see that the gaps presented in Figure \ref{fig:singular-aug-nlp-strategies} are inversely proportional to the values of $\psi^*$: values of $\psi^*$ close to one correspond to better performance of the \NLPorigMERSP\ bound. For $C^a$, sharp decreases in $\psi^*$, as revealed on the logarithmic scale, are associated with noticeable increases in the gaps. In contrast, for $C^b$, all instances exhibits values $\psi^*\geq 0.95$, which correspond to small gaps. 
\begin{figure}[!ht]
    \centering
    \includegraphics[width=0.6\linewidth]{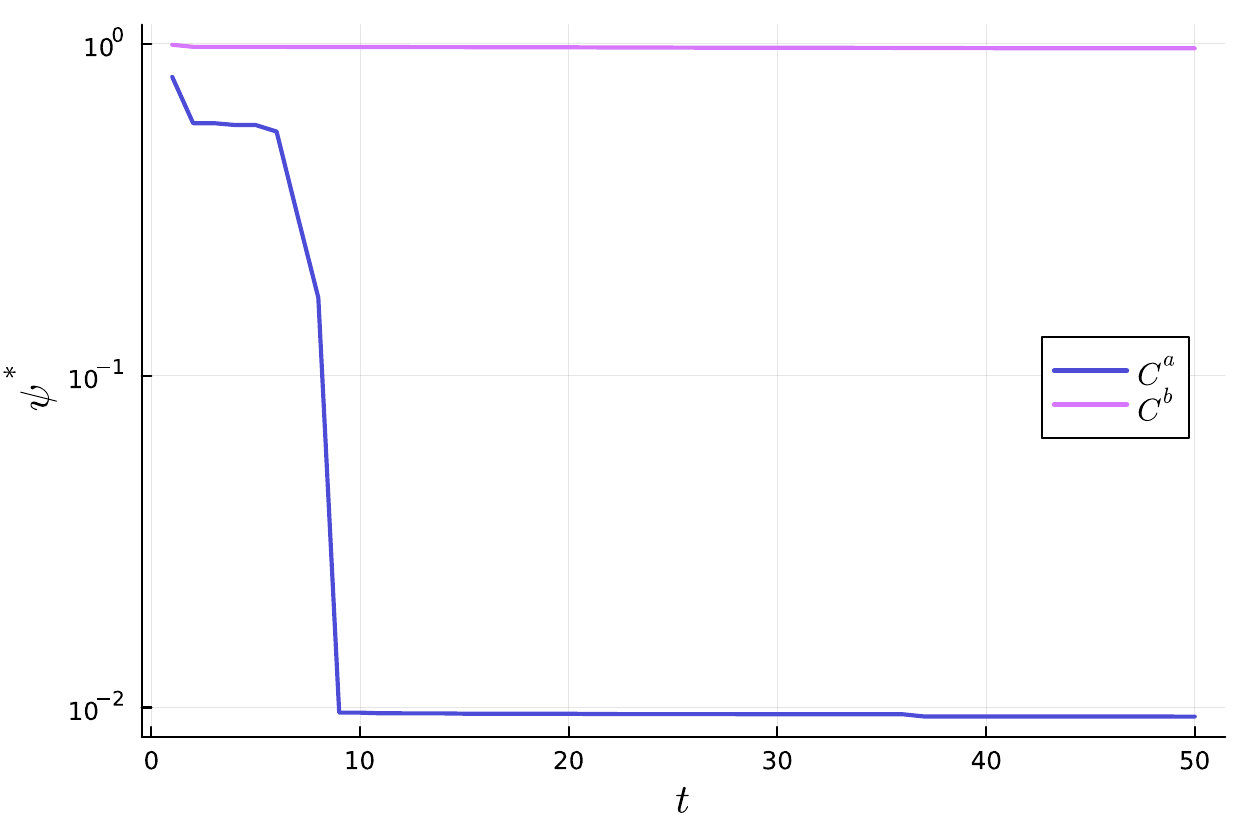}
    \caption{Values of $\psi^*$ in \NLPorigMERSP\ for the \ref{MERSP} instances  considered in Figure \ref{fig:singular-aug-nlp-strategies}
\label{fig:values_of_psi_singular_instances_nlp}}
\end{figure}

 Finally,  Figure \ref{fig:good_singular_instance_nlp}, illustrates the effect of g-scaling on the \NLPorigMERSP\ bound, focusing on the $\IdentityNLP$  strategy, for which  the g-scaling parameter is optimized as described in \S\ref{subsec:optimize-Psi}. The behavior observed for singular covariance matrices closely mirrors the reported in  Figure \ref{fig:compare_nlp_procedures_full_rank_comp_bounds_C63_90_124}  for the \NLPcompMERSP\ bound in the nonsingular case. Again, we observe a substantial reduction in the gaps when g-scaling is applied. 

\begin{figure}[!ht]
    \centering
\includegraphics[width=0.99\linewidth]{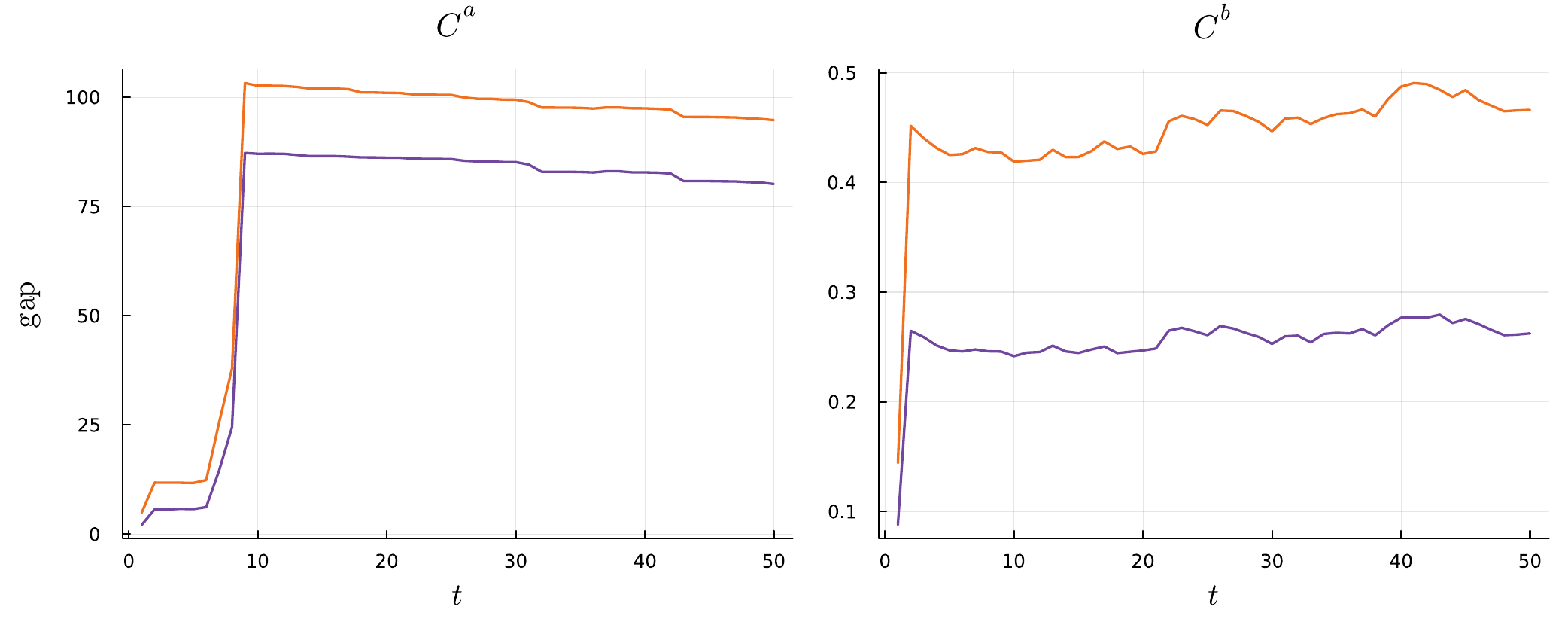}

\includegraphics[width=0.3\linewidth]{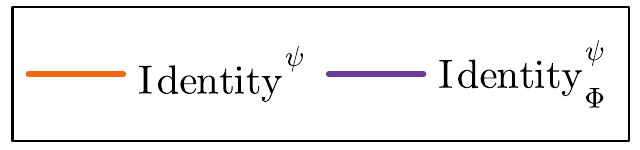}
     \caption{Impact of g-scaling on the \NLPorigMERSP\ bound for the \ref{MERSP} instances considered in Figure \ref{fig:singular-aug-nlp-strategies}}  
\label{fig:good_singular_instance_nlp}
\end{figure}

\subsection{Branch-and-bound for \ref{MERSP}}
We implemented a branch-and-bound (B\&B) algorithm  using the Julia package \texttt{Bonobo.jl}
(see \url{https://github.com/Wikunia/Bonobo.jl/})
with a pseudo-cost branching strategy. At each node, variable fixing was performed through the $\Rfix$ strategy, which reduces the size of the data matrix and is expected to outperform $\Sfix$ both theoretically and computationally (see Remark \ref{rem:RfixvsFfix}). We also applied the additional variable-fixing procedure from Theorem \ref{thm:variable_fixing}.

Our computational study focuses on  $\mathbf{C}(124)$, the most challenging of the three non-singular test instances, with $n=40$, $s=20$, and $t = 5, \ldots, 10$. Even for these relatively small values of $t$, the root-node gaps remain substantial, making the instances non-trivial for exact solution. Consequently, a time limit of two hours was imposed for each instance. Whenever this 
limit was reached, the execution time is reported as `*' in the presentation of our numerical results. 

The upper-bounding procedures were  $\IdentityNLP^\psi$\ and $\IdentityNLP^\psi_{\Phi}$\,. This choice was motivated by the computational results of Figure \ref{fig:compare_nlp_procedures_full_rank_comp_bounds_C63_90_124} and Figure \ref{fig:compare_nlp_improvement_procedures_full_rank_comp_bounds_C63_90_124}, which show that the $\IdentityNLP$\ strategy provides the strongest bounds and both scaling mechanisms contribute to tightening the relaxation. The parameter $\psi$ was always employed, as it admits a closed-form solution  (Theorem \ref{thm:best_alpha_NLPId}) and can be computed at negligible cost throughout the B\&B tree.

To evaluate $\IdentityNLP^\psi_{\Phi}$ at each node, we computed a high-quality $\Phi$ by solving  \eqref{eq:min_maxeig_C2_subprob}. In our experiment, further optimizing $\Phi$ using the procedure described in \S\ref{subsec:optimize-Psi} produced only  marginal improvements when initialized from the solution of \eqref{eq:min_maxeig_C2_subprob}, typically below 0.01 in the resulting bound, while  substantially increasing computational effort. For this reason, only the  scaling obtained from \eqref{eq:min_maxeig_C2_subprob} was employed  within the B\&B algorithm.  

Table \ref{tab:bb_nlp_comparison} summarizes the results. The effect of g-scaling is already evident at the root node, where the gaps are reduced by approximately 27\% to 44\% across all tested values of $t$. These stronger bounds translate directly into a much smaller B\&B search tree. For example, when $t=9$, the number of explored nodes decreases from 386755 to 107329, while the running time drops from the two-hour limit to approximately 4600 seconds. Similar reductions are observed for all values of $t$. 
The impact on solution quality is even more striking. For $t=8$ and $t=9$, $\IdentityNLP^\psi_{\Phi}$\ closes the gap completely within the time limit, whereas $\IdentityNLP^\psi$\ still leaves gaps of 0.19 and 0.38, respectively.  Even for the most difficult instance ($t=10$), g-scaling reduces the final gap from 0.60 to 0.07. 

An interesting observation is that the contribution of $\psi$-scaling inside B\&B is relatively limited for these instances. At the root node, $\psi^*=1$, so the $\IdentityNLP^\psi$\ bound coincides with the original $\IdentityNLP$\ bound. This is expected from Corollary \ref{cor:CNT_full_row_rank} because $t<n$. As variables are fixed and the reduced subproblems become smaller, values $\psi^*>1$ may arise. However, the last columns of Table \ref{tab:bb_nlp_comparison} show that such nodes represent only a small fraction of the search tree. For instance, when $t=10$, only  472 of the 174075 nodes explored by $\IdentityNLP^\psi_\Phi$\ have $\psi^*>1$. Thus, for these instances, the main benefit of the $\psi$-scaling lies in its negligible computational cost and its ability to strengthen the relatively small number of subproblems reported in the last two columns of Table \ref{tab:bb_nlp_comparison}, whereas the dominant improvements in the B\&B performance were obtained through g-scaling.   

Finally, the variable-fixing procedure proves effective, particularly for $\IdentityNLP^\psi_\Phi$. The number of fixed variables grows substantially with $t$, exceeding $10^5$  when $t=9$ and $t=10$. The results also highlight the strong connection between bound quality and variable fixing: the stronger bounds provided by g-scaling lead to significantly more effective variable fixing.  In particular, for  $t=9$ and $t=10$,  $\IdentityNLP^\psi_\Phi$\ fixes substantially more variables than  $\IdentityNLP^\psi$\, despite exploring far fewer nodes. For smaller values of $t$, $\IdentityNLP^\psi$\, may  fix more variables overall, but these instances are solved by $\IdentityNLP^\psi_\Phi$ after exploring relatively few nodes, limiting the potential impact of additional variable fixing. 

These results suggest that stronger upper bounds not only improve node pruning directly but also enhance the effectiveness of the variable-fixing procedure. The resulting reduction in subproblems dimensions is therefore an important factor behind the overall performance of the B\&B algorithm and complements the improvements provided by the tighter relaxations. 

\begin{table}[!ht]
\setlength{\tabcolsep}{5.5pt}
\scriptsize
\centering
\begin{tabular}{r|cc|cc|rr|rr|rr|rr}
 & \multicolumn{2}{c|}{\scriptsize root gap} & \multicolumn{2}{c|}{\scriptsize final gap} & \multicolumn{2}{c|}{\scriptsize time (sec)} & \multicolumn{2}{c|}{
 \# nodes} & \multicolumn{2}{c|}{
 \# fixed variables} & \multicolumn{2}{c}{
 \# nodes with $\psi^*\!>\!1$} \\[1.75pt]
\multicolumn{1}{c|}{\scriptsize $t$} & \multicolumn{1}{c}{$\IdentityNLP^\psi$} & \multicolumn{1}{c|}{$\IdentityNLP^\psi_\Phi$} & \multicolumn{1}{c}{$\IdentityNLP^\psi$} & \multicolumn{1}{c|}{$\IdentityNLP^\psi_\Phi$} & \multicolumn{1}{c}{$\IdentityNLP^\psi$} & \multicolumn{1}{c|}{$\IdentityNLP^\psi_\Phi$} & \multicolumn{1}{c}{$\IdentityNLP^\psi$} & \multicolumn{1}{c|}{$\IdentityNLP^\psi_\Phi$} & \multicolumn{1}{c}{$\IdentityNLP^\psi$} & \multicolumn{1}{c|}{$\IdentityNLP^\psi_\Phi$} & \multicolumn{1}{c}{$\IdentityNLP^\psi$} & \multicolumn{1}{c}{$\IdentityNLP^\psi_\Phi$} \\[1.75pt] \hline
5{$\vphantom{\Sigma^{I^I}}$} & 0.36 & 0.20 & 0.00 & 0.00 & 471.06 & 45.32 & 24577 & 659 & 12494 & 1481 & 3 & 2 \\
6 & 0.52 & 0.31 & 0.00 & 0.00 & 1283.10 & 100.28 & 68297 & 1747 & 26961 & 3065 & 17 & 5 \\
7 & 0.85 & 0.56 & 0.00 & 0.00 & 5254.23 & 330.84 & 242331 & 7927 & 89746 & 9935 & 11 & 45 \\
8 & 1.19 & 0.80 & 0.19 & 0.00 & * & 1261.06 & 398051 & 26597 & 57337 & 22333 & 0 & 16 \\
9 & 1.50 & 1.05 & 0.38 & 0.00 & * & 4624.15 & 386755 & 107329 & 6638 & 104207 & 0 & 223 \\
10 & 1.77 & 1.29 & 0.60 & 0.07 & * & * & 436781 & 174075 & 622 & 164371 & 1 & 472
\end{tabular}
\captionof{table}[]{B\&B for \ref{MERSP} using the \NLPcompMERSP\, bound  ($\mathbf{C}(124)$,    $n=40$, $s=20$)
}\label{tab:bb_nlp_comparison}
\end{table}


\section{Outlook}\label{sec:out}
We have considered only g-scaling using the same $\Phi$ for both matrices in order to maintain convexity of the \ref{hNLP} bound. We plan to investigate the use of  different g-scalings and develop strategies for preserving convexity in such a setting.
It is natural to try to
extend other convex-programming bounds from 
\ref{MESP} to 
\ref{MERSP}, namely the ``linx'', ``BQP'' and ``factorization'' bounds.  
For the complementary NLP bound, which was the point of departure for
what we presented, 
\citep*{AFLW_Remote} exploited the fact that 
$C[N,N]\succeq C_T[N,N]$,
 and so $C[N,N]^{-1} \preceq C_T[N,N]^{-1}$.
 This led to a convex-programming bound
 based on a complementary formulation, 
 a shortcoming of which is that it only applies when the covariance matrix $C$ is positive definite --- we partially overcame that restriction.
 Following a similar approach
 with the linx bound and 
 the BQP bound does not seem to
 lead to convex relaxations, and 
 trying to follow such an approach with the factorization bound looks to be very difficult to analyze. So we leave 
this direction for future work. 

\medskip

\noindent {\bf Acknowledgments.} M. Fampa was supported in part by CNPq grant 307167/2022-4.
J. Lee was supported in part by AFOSR grant FA9550-22-1-0172. G. Ponte would like to 
thank Wendel Melo for helpful conversations
on B\&B implementation issues.

\bibliographystyle{cas-model2-names}
\bibliography{Bib}


\section*{Appendix}

\begin{lemma}\label{lem:help_psi_eigvals}
    Let $A\in\mathbb{S}^n_{+}$\,,  $B\in\mathbb{S}^n_{++}$\,, $d\in\mathbb{R}^n_{++}$\,, and $D:=\Diag(d)$.    Let
$Q:=DAD$ and $K:=DBD$.
 We have that
\[
B^{-1/2}AB^{-1/2}
\quad\text{and}\quad
K^{-1/2}QK^{-1/2}
\]
have the same eigenvalues.
\end{lemma}

\begin{proof}
First observe that $K^{-1}=D^{-1}B^{-1}D^{-1}$.
Hence,
\[
K^{-1}Q
=
D^{-1}B^{-1}D^{-1}(DAD)
=
D^{-1}B^{-1}AD.
\]
Now let $S:=D^{-1}$. Then
\[
S^{-1}(K^{-1}Q)S
=
D(D^{-1}B^{-1}AD)D^{-1}
=
B^{-1}A.
\]
Therefore,
\[
K^{-1}Q \sim B^{-1}A,
\]
and thus these matrices have the same eigenvalues.

Next, recall that for any  $E$ positive semidefinite and $F$ positive definite, we have
\[
F^{-1}E
\sim
F^{-1/2}EF^{-1/2}.
\]
Applying this identity with $(E,F)=(A,B)$ and  $(E,F)=(Q,K)$, 
and considering that similar matrices have the same eigenvalues, the results follows.\qed
\end{proof}

\end{document}